\theoremstyle{plain}
\newtheorem{thm}{Theorem}[section]
\newtheorem{lemma}[thm]{Lemma}
\newtheorem{prop}[thm]{Proposition}
\newtheorem{cor}[thm]{Corollary}
\theoremstyle{definition}
\newtheorem{remark}[thm]{Remark}
\theoremstyle{example}
\newtheorem{example}{Example}
\theoremstyle{remark}
\numberwithin{equation}{section}
\newtheorem{algorithm}{Algorithm}
\newtheorem{definition}{Definition}
\newcommand{\ym}{y^*}
\def\T{{trap}}
\def\del{\partial}
\newcommand{\eqdef}{\, \overset{\mbox{\tiny def}}{=}\, }
\def \alp{\xi}
\def\R{\mathbb R}
\def\Z{\mathbb Z}
\def\E{\mathbb{E}}
\def\cA{\mathcal{A}}
\def\cP{\mathcal{P}}
\def\cM{\mathcal{M}}
\def\m{m}
\def\RR{\mathbb{R}} 
\newcommand{\EG}{\mathcal{A}}
\newcommand{\TG}{\mathcal{B}}
\newcommand{\X}{X^N}
\begin{document}

\title{Weak error analysis of numerical methods for stochastic models of population processes}

\author{David F. Anderson\footnote{Department of Mathematics, University of
  Wisconsin, Madison, Wi. 53706, anderson@math.wisc.edu, grant support from NSF-DMS-1009275.} \ and Masanori Koyama\footnote{Department of Mathematics, University of
  Wisconsin, Madison, Wi. 53706, koyama@math.wisc.edu, grant support from NSF-DMS-1009275 and NSF-DMS-0805793.}}

\maketitle

\begin{abstract}

The simplest, and most common, stochastic model for population processes, including those from biochemistry and cell biology, are continuous time Markov chains.  Simulation of such models is often relatively straightforward as there are easily implementable methods for the generation of exact sample paths.  However, when using ensemble averages to approximate expected values, the computational complexity can become prohibitive as the number of computations per path scales linearly with the number of jumps of the process.  When such methods become computationally intractable, approximate methods, which introduce a bias, can become advantageous.  In this paper, we provide a general framework for understanding the weak error, or bias, induced by different numerical approximation techniques in the current setting.  The analysis takes into account both the natural scalings within a given system and the step-size of the numerical method.  
Examples are provided to demonstrate the main analytical results as
well as the reduction in computational complexity achieved by the
approximate methods.

\end{abstract}

  \footnotetext{AMS 2000 subject classifications: Primary 60H35, 65C99; Secondary 92C40}

\section{Introduction}
\label{sec:intro}

%\subsection{The general problem}
This paper provides a general framework for analyzing the weak error of numerical approximation techniques for the continuous time Markov chain models typically found in the study of population processes, including chemistry and cell biology. 
The main novelty of this work lies in how the analysis takes account of both the natural multiple scalings of a given system and the step-size of the numerical method, and is best viewed as an extension of the papers \cite{AndGangKurtz2011,Li2011,Li2011b}.
 %The applicable goal of such an analysis is to be able to efficiently compute expected values of functions of the state of the models to a given tolerance.

For $k \in \{1,\dots,R\}$,  let $\zeta_k\in \R^d$ denote the possible transition directions for a continuous time Markov chain, and let $\lambda_k':\R^d \to \R$ denote the respective intensity, or propensity functions.\footnote{In the language of probability, the functions are nearly universally termed \textit{intensity} functions, whereas in the language of chemistry and cell biology these functions are nearly universally termed \textit{propensity} functions.  We choose the language of probability theory throughout the paper.}
  The  random time change representation for the model of interest is then
\begin{equation}\label{eq:main}
	X(t) = X(0) + \sum_{k = 1}^R Y_k\left( \int_0^t \lambda_k'(X(s)) ds\right) \zeta_k,
\end{equation}
where the $Y_k$ are independent, unit-rate Poisson processes.  See, for example, \cite{KurtzPop81},
\cite[Chapter 6~]{Kurtz86}, or the recent survey \cite{KurtzAnd2011}.  The infinitesimal generator for the model \eqref{eq:main} is the operator $\EG$ satisfying
\begin{equation*}
  (\EG f)(x) = \sum_{k} \lambda_k'(x)(f(x + \zeta_k) - f(x)),
\end{equation*}
where $f : \R^d \to \R$ is chosen from a sufficiently large class of functions.    
%Such a model is equivalent to the so-called ``chemical master equation'' type model commonly found in the biology literature, where, in the language of probability, the chemical master equation is Kolmogorov's forward equation.

The problem of simulating \eqref{eq:main} (and particularly of approximating
expected values) seems deceivingly easy since we can simulate the
continuous time Markov chains exactly.\footnote{This assumes the pseudo-random numbers generated by modern computers are ``random enough'' to be considered truly random.  We take this viewpoint throughout.}  Letting $f$ be some function of the state of the system giving us some quantity of interest, we may estimate $\E f(X(T))$ via an ensemble average,
\begin{equation}
	\widehat \mu_n = \frac{1}{n} \sum_{i = 1}^n f(X_{[i]}(T)),
	\label{eq:estimator1}
\end{equation}
where $X_{[i]}$ is the $i$th independent copy of \eqref{eq:main}.  The law of large numbers then ensures that 
\begin{equation}
  \lim_{n \to \infty} \widehat \mu_n = \E f(X(T)),
  \label{eq:naive}
\end{equation}  
with a probability of one.  However, it is the computational work needed to achieve an accuracy with a given tolerance, and not simply the fact that such a limit holds, that is of most interest to us.

%%%%%%%%%%%%%%%%

\subsection{Scalings and computational cost}
The main potential problem in trying to naively apply the limit \eqref{eq:naive} to a given system stems from the  fact that there is an expected computational cost to the generation of each independent realization, which we denote by $\overline N$ for now, and explicitly quantify in \eqref{eq:cost} below.  Assuming we wish to approximate $\E f(X(T))$ to an accuracy of $\epsilon>0$, in terms of confidence intervals, we must generate $O(\epsilon^{-2})$ paths yielding a total  computational complexity of order $O(\overline N \epsilon^{-2})$.  This computational complexity can be substantial when $\overline N$ is large and/or $\epsilon$ is small.

In many models of interest, including many from cell and population biology, we do, in fact, have that $\overline N \gg 1$.
% In this case, attempting to apply \eqref{eq:naive} to a desired tolerance in a reasonable amount of time may not be feasible.   
 It is therefore natural to consider how approximation schemes perform.  Before considering such schemes, however, it is important (from an analytical point of view) to modify \eqref{eq:main} by incorporating into the model a scaling parameter, $N$, that can eventually be used to quantify $\overline N$.  The value $N$ is usually taken to be the order of magnitude of $\max_i |X_i|$.  We then scale the process by setting 
 \begin{equation*}
 	X_i^N = N^{-\alpha_i}X_i,
 \end{equation*}
 where $\alpha_i$ is chosen so that $X_i^N$ is $O(1)$.  Defining $\zeta_{k}^N = N^{-\alpha_i}\zeta_{ki}$, the general form of the scaled model is then
\begin{equation}
	X^N(t) = X^N(0) + \sum_{k =1}^R Y_k\left(N^{\gamma} \int_0^t  N^{c_k} \lambda_k(X^N(s)) ds\right) \zeta_k^N,
	\label{eq:scaled}
\end{equation}
where $\gamma$ and $c_k$ are scalars such that 
\begin{equation}
  |\zeta_k^N| = O( N^{-c_k}),
  \label{eq:cond1}
\end{equation}
with $|\zeta_k^N|\approx N^{-c_k}$ for at least one $k$, and both  $X^N$ and $\lambda_k(X^N(\cdot))$ are $O(1)$.  
 We explicitly note that we are allowing for the possibility that $|\zeta_k^N| \ll N^{-c_k}$ for some of the $k$.   Also, note that the models \eqref{eq:main} and \eqref{eq:scaled} are equivalent in that one is simply a scaled version of the other.  
For concreteness, the scaling thus described will be carried out explicitly for the stochastic models arising in biochemistry in Section \ref{sec:basic_model}.

 The infinitesimal generator $\EG^N$ for the model \eqref{eq:scaled} is 
\begin{equation}  \label{gen0}
  (\EG^N f)(x) = N^{\gamma} \sum_{k} N^{c_k} \lambda_k(x)(f(x + \zeta_k^N) - f(x)),
\end{equation}
where $f : \R^d \to \R$ is chosen from a sufficiently large class of functions.    
  Note that it is now natural to take
 \begin{equation}
 	\overline N = N^{\gamma} \sum_k N^{c_k}
	\label{eq:cost}
 \end{equation}
 as the order of magnitude for the number of steps required to generate a single path up to a time of $T>0$.  

The parameter $\gamma$ of \eqref{eq:scaled} should be thought of as representing the natural time-scale of the problem with $\gamma>0$ implying a relevant time scale smaller than one.  In this case of $\gamma>0$, the explicit numerical schemes considered in this paper are usually \textit{not} a good choice and other methods, such as averaging techniques, are usually required in conjunction with the methods described here \cite{KurtzAnd2011,Ball06,CaoSS,ELiuVE2006,KurtzKang}.  This fact is demonstrated by our main analytical results which provide error bounds for the different schemes that grow exponentially in $N^{\gamma}$.  Therefore, our main results are most useful when $\gamma \le 0$.

\vspace{.125in}

 The model \eqref{eq:scaled} is henceforth our main model of interest.  We make the following running assumption, which in light of the fact that both $X^N$ and $\lambda_k(X^N(\cdot))$ are $O(1)$, is a light one.
\vspace{.2in}

\noindent \textbf{Running Assumption:} The intensity functions $\lambda_k$ for the scaled process $X^N$ satisfying \eqref{eq:scaled}, together with all of their derivatives, are uniformly bounded.  
\vspace{.2in}

The above running assumption can almost certainly be weakened to a local Lipschitz condition, in which case analytical methods similar to those found in \cite{Hutzenthaler2011} and/or \cite{MattinglyStuart} can be applied.  Proving our main results in such generality, while possible and certainly worth doing in future work, will be significantly messier and we feel the main points of the analysis will be lost.

\vspace{.1in}
We return to our problem of interest and let $f$ be a function of the state giving a quantity of interest and consider how to approximate $\E f(X^N(T))$.    %Note that we are now approximating an expected value of the \textit{scaled} process, $X^N$, even though the  quantity of interest may be a statistic of the \textit{unscaled} process $X$.  The reason for this is two-fold.  First, working with the scaled process \eqref{eq:scaled} is more analytically tractable than working with \eqref{eq:main} as it explicitly takes the $N$ dependence, which is presumably large, into account in the analysis.  Second, for the most common choices of $f$, such as 
%\begin{enumerate}
%\item $f(x) \equiv x$, 
%\item $f(x) \equiv x^2$, 
%\item $f(x) \equiv 1_{\{x \in A\}}$, where $A \subset \R^d$,
%\end{enumerate}
%it is usually simple to convert from $f(X^N)$ to $f(X)$.
As already discussed, the computational cost of approximating $\E f(X^N(T))$ to an accuracy of $\epsilon$, in the sense of confidence intervals, using the estimator \eqref{eq:estimator1} is $O(\overline N \epsilon^{-2})$.  Suppose now that $Z^N$ is an approximation of $X^N$ constructed with a time-discretization step of size $h>0$.\footnote{As the approximate process explicitly depends upon the choice of $h$, we could denote it as $Z^N_h$.  However, for  ease of exposition we choose to drop the $h$ dependence from the notation.} Letting $Z_{[i]}^N$ denote independent copies of $Z^N$, we construct the estimator 
\begin{equation}
  \widehat \mu_n = \frac{1}{n}\sum_{i = 1}^n f(Z_{[i]}^N(T)).
  \label{eq:CMC}
\end{equation}
Suppose that it can be shown that the approximation scheme has a weak error, or bias, of order one.  That is,
\begin{equation*}
	\E f(X^N(T)) - \E f(Z^N(T)) = O(h),
\end{equation*}
for a suitably large class of functions $f$.  Then, noting that
 \begin{equation*}
 	\E f(X^N(T)) - \widehat \mu_n = \left[ \E f(X^N(T)) - \E f(Z^N(T))\right] + \left[ \E f(Z^N(T)) - \widehat \mu_n \right],
 \end{equation*}
 we see that we must choose $h = O(\epsilon)$ to make the first term on the right, the bias, $O(\epsilon)$, and $n = O(\epsilon^{-2})$ to make the second term, the statistical error, have a variance of $O(\epsilon^2)$, and a standard deviation of $O(\epsilon)$.  This gives a total computational complexity of $O(\epsilon^{-3})$.  This will greatly lower the computational complexity of the problem, as compared with using  exact sample paths, if $\epsilon^{-1} \ll \overline N$.  
 
 If, instead, the method for generating $Z^N$ is second order accurate in a weak sense, that is if
 \begin{equation*}
	\E f(X^N(T)) - \E f(Z^N(T)) = O(h^2),
\end{equation*}
then we could choose $h = O(\epsilon^{1/2})$  to yield a bias of $O(\epsilon)$.  This leads to a total computational complexity of $O(\epsilon^{-2.5})$, which for small $\epsilon$ represents a substantial improvement over using an order one method.

 The above discussion points out that the key quantity to understand for a given approximation method, and the focus of this paper, is the bias, or weak error, it induces for a given function $f$:
\begin{equation}
  B_f(Z^N,x,t) \eqdef  \E_x f(X^N(t)) - \E_x f(Z^N(t)).
\label{eq:bias}
\end{equation}
Note that $B_f(Z^N,x,h)$ represents the local, one-step error of the method as the fixed time-step is of size $h>0$.
Analyzing the bias induced by different numerical schemes is by now classical in the study of stochastic processes, with nearly all the focus falling on how the bias scales with the size of the time-step, $h$ \cite{KloedenPlaten92}. However, it is not sufficient in the current setting to simply understand how the bias \eqref{eq:bias} scales with the time-discretization alone.  Care must also be taken to quantify how the leading order constants depend upon the natural scalings of a given system and given method, here quantified by the parameter $N>0$.   For example, if
 \begin{equation*}
	\E f(X^N(T)) - \E f(Z^N(T)) = O( c_1^Nh + c_2^Nh^2),
\end{equation*}
then we wish to understand how $c_1^N, c_2^N$ depend upon $N$ since for a given choice of $h$ we may have that $c_1^Nh < c_2^N h^2$.  In this case,  the method will behave as if it is an order two method until $h$ is reduced to the point when $c_1^Nh > c_2^N h^2$, in which case it will behave like an order one method.

\subsection{Notation and terminology}
\label{sec:terminology}

 In this short subsection, we collect some necessary extra notation and terminology used throughout the paper. We first note that for  $f : \R^d \to \RR$, and any $t\ge0$, Dynkin's formula for the process \eqref{eq:scaled} is 
\begin{equation}\label{eq:Dynkin}
	\E_{x} f(X^N(t)) = f(x) + \E_{x} \int_0^t \EG^N f(X(s)) ds,
\end{equation}
which holds so long as the expectations exist.  Similar expressions will hold for the approximate methods under consideration.  Dynkin's formula will be our main analytical tool as it will allow us to quantify the bias \eqref{eq:bias} for the different methods and we therefore focus on developing compact notation for the generators of our processes.

We define the operator $\nabla_k^N$  for the $k$th possible transition, which we will typically call a ``reaction'' in keeping with the motivating application of Section \ref{sec:basic_model}, via
\begin{equation} 
  \nabla^N_k f(x) \eqdef  N^{c_k}(f(x + \zeta_k^{N})   - f(x)). 
  \label{delta} 
  \end{equation}
%    For example, in Example \ref{ex:gamma} above, we have
%  \begin{align*}
%  	\nabla_1^N f(x) &= N(f(x + (e_2 - e_1)/N) - f(x))\\
%	\nabla_2^N f(x) &= N(f(x + (e_1 - e_2)/N) - f(x)),
%  \end{align*}
%  where $S_1 \to S_2$ is arbitrarily labeled as the first reaction, and $e_i\in \Z^2$ is the vector of all zeros except with a one in the $i$th location.  
     Note that if $f$ is globally Lipschitz, then $\nabla^N_k f(x)$ is uniformly bounded over $k$ and $x$ since  $|\zeta_k^N| = O(N^{-c_k})$.  
    We may now write \eqref{gen0} as
\begin{equation*}
  (\cA^N f)(x) = \sum_{k} N^\gamma \lambda_k(x)  \nabla^N_k f(x) .
  \end{equation*}
Defining the vector valued operators 
\begin{equation}\label{eq:operators}
   \lambda \eqdef [\lambda_1, \dots , \lambda_R] , \quad   \nabla^N \eqdef [\nabla_1^N,\dots , \nabla_R^N], 
\end{equation}
where we recall that $R$ is  the number of reactions,  we obtain  
\begin{align*}
(\cA^N f) (x) = ( N^\gamma \lambda \cdot  \nabla^N ) f(x).
 %\label{gen}
\end{align*}
For $i \in \{1,\dots,d\}$ and $k \in \{1,\dots,R\}$, we let $m_k$ satisfy
\begin{equation*}
  |\zeta_k^N| = N^{-\m_k}.
\end{equation*}
Note that, by construction, we have $c_k \le \m_k$, for all $k$.  
Finally,  we denote the $j$th directional derivative of $f$ into the direction $[v_1, v_2, ... v_j ]$ by $f'[v_1, ..., v_j]$ and make the usual definition
\begin{align}
\| f \|_j   \eqdef \sup_{x}   \{  f'[v_1, ...., v_j](x),  \|v\| = 1    \}  
\label{eq:norm}
\end{align}

%%%%%%%%%%%%%%%%%%%%%%%%%%%%%%

\subsection{Summary of main results.}  
The following list is a summary of our main results.  Technical details and assumptions have been omitted from the statements below for the sake of clarity.  
%In the following subsection, we discuss how these results are related to previous results in \cite{AndGangKurtz2011, AndMatt2011, Li2011, Li2011b}.
\begin{enumerate}[1.]
\item In Theorem \ref{thm:LG}, we prove that for any explicit numerical scheme with a step-size of $h>0$, 
\begin{equation*}
	B_f(Z^N,x,T) = O(Th^{-1}\sup_{z}|B_f(Z^N,z,h)|),
\end{equation*}
where $B_f(Z^N,x,t)$ is the bias  defined in \eqref{eq:bias}.  Thus,  if the numerical scheme has a local, one-step, error of $O(h^{p+1})$, then the global error is $O(h^{p})$.    This result is standard and should be compared to similar results  in \cite{AndMatt2011,Li2011}.  It is included here since it is necessary to show that  the scalings do not alter the usual result.

\item In Theorem \ref{thm:local_Euler}, we prove that if $Z_E^N$ is generated via Euler's method, also known as explicit $\tau$-leaping in the setting of biochemistry, then
\begin{equation*}
	B_f(Z_E^N,x,h) = O( c h^2),
\end{equation*}
where $c$ is independent of $N$.  Thus, after applying Theorem \ref{thm:LG}, Euler's method is proven to be an order one method in that the leading order of the global error satisfies
\begin{equation*}
	B_f(Z^N_E,x,T) = O(ch),
\end{equation*}
and  decreases linearly with the step-size.  This fact is formally stated in Theorem \ref{thm:euler_global}.

\item In Theorem \ref{thm:mdpt_local}, we prove that if $Z_M^N$ is generated via an approximate midpoint method, then
\begin{equation*}
	B_f(Z^N_M,x,h) = O( c_1^N h^2 + c_2^N h^3),
\end{equation*}
where $c_1^N,c_2^N$ depend upon the natural scalings of the system, quantified here by $N> 0$.   The term that dominates this error  then depends upon the specific scalings of a system, encapsulated in the constants $c_1^N$ and $c_2^N$,  and the size of the time discretization $h$.  Theorem \ref{thm:LG} then implies
\begin{equation*}
	B_f(Z^N_M,x,T) = O( c_1^N h + c_2^N h^2),
\end{equation*}
and the midpoint method will sometimes behave like a first order method, and other times will behave like a second order method.  This fact is formally stated in Theorem \ref{thm:mdpt_global}. A transition point, in terms of $N$ and $h$, for this change in behavior is also provided.

\item In Theorem \ref{thm:local_trap}, we prove that if $Z^N_{\T}$ is generated via the weak trapezoidal method, which was originally formulated in the diffusive setting  \cite{AndMatt2011} and is extended to the discrete setting in Section \ref{sec:numerics}, then
\begin{equation*}
	B_f(Z^N_{\T},x,h) = O( c h^3),
\end{equation*}
where $c$ is independent of $N$.  Thus, after applying Theorem \ref{thm:LG}, the Weak Trapezoidal method is proven to be a second order method in that the leading order of the global error satisfies
\begin{equation*}
	B_f(Z^N_{\T},x,T) = O(ch^2),
\end{equation*}
and  decreases quadratically with the step-size.  This fact is formally stated in Theorem \ref{thm:wtm_global}.  %To the best of the authors' knowledge, this constitutes the first higher order method in the current setting.

%%%%%%%%%%%%%%%%%%%%%%%%%%%%%%%%%%%%
%%%%%%%%%%%%%%%%%%%%%%%%%%%%%%%%%%%%

\subsection{Context}

We attempt to put the present work in the correct historical context.  
  In \cite{AndGangKurtz2011}, Anderson, Ganguly, and Kurtz provided the first error analysis of different approximation techniques that incorporated the natural scalings of the system \eqref{eq:main} into the analysis.  Specifically, they considered models satisfying the ``classical scaling,'' which using our present terminology corresponds with $\gamma = 0$, $c_k \equiv 1$, and $\alpha_i \equiv 1$.  They further coupled the time discretization to the scaling, thereby ensuring $h$ was always in a useful regime, and derived results for both the weak and strong error of Euler's method and the midpoint method.
  They proved that, in this specific setting, Euler's method is an order one method in both a weak and a strong (in the $L^1$ norm) sense.  They proved that the strong error of the midpoint method falls between order one and two (see \cite{AndGangKurtz2011} for precise statements), and that the leading order term of the weak error of the midpoint method scales quadratically with the step-size.

%The importance of the analysis in \cite{AndGangKurtz2011} is that it
% pointed out the need to incorporate the natural scales of the system into the analysis.  Our main results pertaining to the midpoint method, Theorems \ref{thm:mdpt_local} and \ref{thm:mdpt_global}, agree with, though say substantially more than, the results found in \cite{AndGangKurtz2011}.

In \cite{Li2011} it was shown by Hu, Li, and Min that the $O(h^2)$ weak convergence rate of  the midpoint method given in \cite{AndGangKurtz2011} depended intimately on the coupling between the time discretization and the scaling parameter of the system.  It was this particular observation that in a large part motivates the present work as we wish to provide a general analytical rate of convergence for the different relevant methods in the most general possible scaling regime and in which the time discretization parameter is  independent  of the natural scalings.

In \cite{AndMatt2011} the weak trapezoidal algorithm was introduced in the context of SDEs driven by Brownian motions.  There, it was shown to be an easy to implement method that is second order accurate in a weak sense.  Further, it has the nice property that no costly derivatives need be computed during the course of the simulation.  In \cite{Li2011b}, it was pointed out that since the motivation for the original weak trapezoidal algorithm comes from viewing SDEs as driven by space-time Wiener processes, the exact same algorithm can work in the present jump setting by viewing the driving forces as space-time Poisson processes.  This observation was also made independently in an earlier version of the present paper.

It is worth pointing out that there are at least two other trapezoidal type algorithms in the literature pertaining to models of stochastic chemical kinetics.  These are the implicit and explicit trapezoidal methods of \cite{CaoTrap2005}.  These method were explicitly developed to give better stability than the usual methods, and so do not exhibit better convergence than does Euler's method.

 \end{enumerate}

%%%%%%%%%%%%%%%%%%%%%

\subsection{Paper outline}
  
The  remainder of the paper is organized as follows.  In Section \ref{sec:basic_model}, we show how the basic models considered in this paper, both \eqref{eq:main} and \eqref{eq:scaled}, arise naturally in biochemistry, which is the main area of motivation for this work.  This section can safely be skipped by anyone not interested in that application.   In Section \ref{sec:numerics}, we discuss numerical methods for the models under consideration, including both exact and approximate schemes. 
% In Section \ref{sec:scaling}, we discuss the multi-scale nature of stochastic models of chemical reaction networks, and incorporate this into our model.   
In Section \ref{sec:analysis}, we prove Theorem \ref{thm:LG}, as stated loosely above, and relevant corollaries.  In Section \ref{sec:local}, we prove Theorems \ref{thm:local_Euler}, \ref{thm:mdpt_local}, and \ref{thm:local_trap}, each stated loosely above, providing the local, one-step errors induced by the approximate schemes considered here.  In Section \ref{sec:regularity}, we provide bounds on the semigroup operator of the exact process $X^N$, yielding the final piece to the global analysis of the weak error of the different methods.  We also briefly discuss stability concerns in Section \ref{sec:regularity}.  In Section \ref{sec:examples}, we provide relevant examples.

\section{Motivating Systems: Biochemical Reaction Networks}
\label{sec:basic_model}

This section builds the relevent models \eqref{eq:main} and \eqref{eq:scaled} used in the study of stochastically modeled biochemical reaction networks.  We feel it is worthwhile to include this section as this is the area of main motivation for the present work.  However, it can safely be skipped by those wishing to simply see the mathematical analysis and not the areas of application.

\subsection{The unscaled model}
\label{sec:first_biochem}

A chemical reaction network is a dynamical system involving multiple reactions and chemical species. The simplest stochastic models of such networks treat the system as a
 continuous time Markov chain with the state, $X\in \Z^d_{\ge 0}$, giving the number of molecules of each species and with reactions modeled as possible transitions of the chain. 

An example of a chemical reaction is 
\begin{align*}
  2S_{1}+S_{2} ~\rightarrow~ S_{3},
\end{align*}
where we would interpret the above as saying two molecules of type $S_1$ combine with a molecule of type $S_2$ to produce a molecule of type $S_3$. The $S_{i}$ are called chemical {\em species}. Letting 
\begin{equation*}
  \nu_1 = \left(\begin{array}{c}
  2\\
  1\\
  0
  \end{array}\right), \quad \nu_1' = \left(\begin{array}{c}
  0\\
  0\\
  1
  \end{array}\right), \quad \text{and} \quad \zeta_1 = \nu_1' - \nu_1 = \left(\begin{array}{c}
  -2\\
  -1\\
  1
  \end{array}\right),
  \end{equation*}
   we see that every instance of the reaction changes the state of the system by addition of $\zeta_1$.  Here the subscript ``1'' is used to denote the first (and in this case only) reaction of the system.

 In the
general setting we denote the number of species by $d$, and for $i \in \{1,\dots, d\}$ we denote the $i$th species as $S_{i}$.  We then
consider a finite set of $R$ reactions, where the model for the $k$th reaction is
determined by
\begin{enumerate}[$(i)$]
\item a vector of inputs $\nu_k$ specifying the number of
molecules of each chemical species that are consumed in the reaction,
\item a vector of outputs $\nu_k'$ specifying the number of molecules of
each chemical  species that are created in the reaction, and 
\item a function of the
state $\lambda_k'$ that gives the transition intensity, rate, or propensity
at which the reaction occurs.
\end{enumerate}
Specifically, if we denote the state of the system at time $t$ by $X(t) \in \Z^d$, and if the $k$th reaction occurs at time $t$, we update the state by addition of the \textit{reaction vector} 
\begin{equation*}
  \xi_k \eqdef \nu_k' - \nu_k
  \end{equation*}
   and the new state becomes $\displaystyle X(t) = X(t-) + \xi_k.$ For the standard Markov chain model, the number of times that the $k$th reaction occurs by time $t$ can be
represented by the counting process 
$$ R_k(t) = Y_k\bigg(\int_0^t \lambda_k'(X(s))ds\bigg), $$ 
where the $Y_k$ are independent,
unit-rate Poisson processes \cite{KurtzPop81}, \cite[Chapter 6~]{Kurtz86}.  The state of the system then satisfies
\begin{align*}
  X(t) &= X(0) + \sum_k Y_k\left( \int_0^t \lambda_k'(X(s))ds
  \right)\xi_k,
\end{align*}
which was \eqref{eq:main} in the Introduction.
The above formulation is termed a random time change representation
and is equivalent to the chemical master equation representation
found in much of the biology and chemistry literature, where the master
equation is Kolmogorov's forward equation in the terminology of
probability.

A common choice of intensity function for chemical reaction systems is that of mass action kinetics.  Under mass
action kinetics, the intensity function for the $k$th reaction is
\begin{equation}
\lambda_k'(x) = \kappa_k' \prod_{i = 1}^d \frac{x_i!}{(x_i - \nu_{ki})!},% \kappa_k \left(\prod_{i=1}^d \nu_{ik}! \right) \prod_{i=1}^d {{x_i}\choose {\nu_{ik}}}.
  \label{eq:stoch_MA}
\end{equation}
where $\nu_{ki}$ is the $i$th component of $\nu_k$.
% Implicit in the assumption of mass action kinetics is that the vessel under consideration is ``well-stirred.'' 

\begin{example}
  To solidify notation, we consider the network
  \begin{equation*}
  	S_1 \overset{\kappa_1}{\underset{\kappa_2}{\rightleftarrows}} S_2,\qquad 2S_2 \overset{\kappa_k}{\rightarrow} S_3,
  \end{equation*}
  where we have placed the rate constants $\kappa_k$ above or below their respective reactions.  For this example, equation \eqref{eq:main} is
  \begin{align*}
  	X(t) = X(0) &+ Y_1\left( \int_0^t \kappa_1 X_1(s)ds\right)\left[\begin{array}{c}
	-1\\
	1\\
	0
	\end{array} \right] + Y_2\left( \int_0^t \kappa_2 X_2(s)ds\right)\left[\begin{array}{c}
	1\\
	-1\\
	0
	\end{array} \right] \\
	&+ Y_3\left( \int_0^t \kappa_3 X_2(s)(X_2(s)-1)ds\right)\left[\begin{array}{c}
	0\\
	-2\\
	1
	\end{array} \right].
  \end{align*}
  Defining $\zeta_1 = [-1,1,0]^T$, $\zeta_2 = [1,-1,0]^T$, and $\zeta_3 = [0,-2,1]^T$, the generator $\cA$ satisfies
  \begin{equation*}
  	(\cA f)(x) = \kappa_1 x_1(f(x +\zeta_1 ) - f(x)) + \kappa_2 x_2(f(x + \zeta_2) - f(x))+ \kappa_3 x_2(x_2 - 1)(f(x + \zeta_3) - f(x)).
  \end{equation*}
\end{example}

\subsection{Scaled biochemical models}
\label{sec:scaling} 

%As discussed in and around \eqref{eq:RateAssumption},  the approximate algorithms being considered are only useful on the class of models which satisfy $\sum_k \lambda_k(X(\cdot)) \gg 1$.  There are at least two different ways this behavior can be achieved.  The first is that there could be a large number of reactions, $R \gg 1$, in which case the approximate algorithms currently being discussed will not provide an appreciable improvement in terms of runtime over the exact simulation methods.  The other common way for $\sum_k \lambda_k(X(\cdot)) \gg 1$ to hold is to have either large abundances of certain species, or to have large rate constants, or both.  We will study the behavior of the different algorithms under this latter assumption.  To do so, we will introduce a scaling parameter, $N$, used to quantify the  variations in the sizes of the abundances and parameters.   

The scaling described below has been used previously in at least \cite{AndersonHigham2011, KurtzAnd2011, Ball06, KurtzKang}.  We emphasize that the scaling is an analytical tool used to understand the behavior of the different processes, and that  the actual simulations using the different methods make no use of, nor have need for, an understanding of $N$, $\alpha$, or the $\beta_k$. 

Let $N \gg 1$ be a natural parameter of the system, perhaps the abundance of the species with the highest number of molecules.  Assume that the system satisfies \eqref{eq:main} with $\lambda_k'$ determined via mass-action kinetics \eqref{eq:stoch_MA}, and  $\zeta_k\in \Z^d$ representing the reaction vectors described in Section \ref{sec:first_biochem}.  For each species $i$, define the {\em normalized  abundance\/}
 (or simply, the abundance) by
\begin{equation*}
  \X_i(t)=N^{-\alpha_i}X_i(t), \label{normalized}
\end{equation*}
where $\alpha_i\geq 0$ should be selected so that $\X_i$ is $O(1)$. 
Here $X_i^N$  may be the species number ($\alpha_i=0$) or the
species concentration, or something else.

Since the rate constants may also vary over several orders of
magnitude, we write $\kappa_k'=\kappa_kN^{\beta_k}$ where the
$\beta_k$ are selected so that $ \kappa_k=O(1)$ (recall that $\kappa_k'$ is the original system parameter).  Note that while the $\alpha_i$ are non-negative if $N$ is chosen to be the abundance of the species with the highest number of molecules,  $\beta_k$ can be positive, negative, or zero.
%
%Note that for a binary
%reaction
%\begin{equation*}
%  \kappa_k'X_i X_j=N^{\beta_k+\alpha_i+\alpha_j}\kappa_kX^N_i X^N_j,
%\end{equation*}
%and we can write
%\begin{equation*}
%  \beta_k+\alpha_i+\alpha_j=\beta_k+\nu_k \cdot \alpha.
%\end{equation*}
%We also have,
%\begin{equation*}
%  \kappa_k'X_i=N^{\beta_k+\nu_k \cdot \alpha}\kappa_k X^N_i,\quad\kappa_k'X^N_
%  i(X^N_i-1)=N^{\beta_k+\nu_k \cdot \alpha} \kappa_k X^N_i(X^N_i-N^{-\alpha_i}),
%\end{equation*}
%where the source vectors are $\nu_k = e_i$ in the first example and $\nu_k = 2e_i$ in the second, with similar expressions for intensities involving higher order
%reactions.  That is, 

Under the mass-action kinetics assumption, we always have that $\lambda_k'(X(s))=N^{\beta_k + \nu_k\cdot \alpha}\lambda_k(X^N(s))$, where $\lambda_k$ is deterministic mass-action kinetics with rate constants $\kappa_k$ \cite{KurtzAnd2011, Ball06, KurtzKang}.  
%Note that for reactions of
%the form $2S_i\to *$, where $*$ represents an arbitrary linear combination of the species, the rate is
%$N^{\beta_k+2\alpha_i}\kappa_k \X_i(t)(\X_i(t)-N^{-\alpha_i})$, so if
%$\alpha_ i>0$, we should write $\lambda_k^N$ instead of $\lambda_k$,
%but to simplify notation, we will simply write $\lambda_k$. 
Our model has therefore become 
\begin{equation}
  \X(t)= \X(0)+\sum_k Y_k\left(\int_0^tN^{\beta_k+\nu_
    k\cdot\alpha} \lambda_k(\X(s))ds\right)\zeta_{k}^N, \quad i \in \{1,\dots, d\},
    \label{eq:main_multi}
\end{equation}
where $\zeta_{ki}^N \eqdef N^{-\alpha_i} \zeta_{ki}$.  To quantify the natural time-scale of the system,  define $\gamma\in \R$ via
\begin{align*}
\begin{split}
\gamma = \max_{\{i,k \ : \ \zeta_{ki}^N \ne 0\} } \{ \beta_k + \nu_k \cdot \alpha - \alpha_i\},
\end{split} \label{eq:gamma}
\end{align*}
where we recall that $\nu_k$ is the source vector for the $k$th reaction.  Letting
\begin{equation*}
	c_k = \beta_k + \nu_k\cdot \alpha - \gamma,
\end{equation*}
for each $k$, the model \eqref{eq:main_multi} is seen to be exactly \eqref{eq:scaled}.

\begin{remark}\label{remark:classical}
If $\beta_k + \nu_k\cdot \alpha = \alpha_i = 1$ for all $i,k$ in \eqref{eq:main_multi}, in which case $\gamma = 0$, then we have what is typically called the \textit{classical scaling}.  It was specifically this scaling that was used in the analyses of the Euler and midpoint methods found in \cite{AndGangKurtz2011, Li2011,Li2011b}. In this case it is natural to consider $\X$ as a vector whose $i$th component gives the concentration, in moles per unit volume, of the $i$th species.
\end{remark}

\begin{example}\label{ex:gamma}
As an instructive example, consider the system
\begin{equation*}
	S_1 \overset{100}{\underset{100}{\rightleftarrows}} S_2
\end{equation*}
with $X_1(0) = X_{2}(0) = \text{10,000}$.  In this case, it is natural to take $N = $ 10,000 and $\alpha_1 = \alpha_2 = 1$.  As the rate constants are $100 = \sqrt{\text{10,000}}$, we take $\beta_1 = \beta_2 = 1/2$ and find that $\gamma = 1/2$.  The equation governing the normalized process $X^N_1$ is
\begin{equation*}
    X_1^N(t) = X_1^N(0) - Y_1\bigg(N^{1/2}N \int_0^t X_1^N(s)ds\bigg)\frac{1}{N} + Y_2\bigg(N^{1/2}N \int_0^t (2 - X_1^N(s))ds\bigg)\frac{1}{N}
\end{equation*}
where we have used that $X^N_1 + X^N_2 \equiv 2$.
\end{example}

%%%%%%%%%%%%%%%%%%%%%%%%%%%%%%%%%%%%
%%%%%%%%%%%%%%%%%%%%%%%%%%%%%%%%%%%%

\section{Numerical methods}
\label{sec:numerics}

\subsection{Exact methods.}  
\label{sec:exact_methods}

As already discussed in the introduction, because we are considering continuous time Markov chains,  there are a number of numerical methods available for the  generation of exact sample paths for the model \eqref{eq:main}, or the equivalent model \eqref{eq:scaled}.   All are examples of discrete event simulation \cite{Glynn89}.  In the language of biochemistry these methods include the
stochastic simulation algorithm, best known as Gillespie's algorithm in this setting
\cite{Gill76,Gill77}, the first reaction method \cite{Gill76}, and
the next reaction method \cite{Anderson2007a, Gibson2000}.  All such
algorithms perform the same two basic steps multiple times until a
sample path is produced over a desired time interval:  conditioned on the current state of the system, both $(i)$ the amount of time that
passes until the next reaction takes place, $\Delta t$, is computed and $(ii)$ the specific reaction that has taken place is found.  Note that $\Delta t$ is an exponential random variable with a parameter of $\sum_k \lambda_k(X(t))$.   Therefore, if
\begin{equation}\label{eq:RateAssumption}
\sum_k \lambda_k(X(t)) \approx \overline N \gg 1 \qquad \text{so that} \qquad \E\Delta t = \frac{1}{\sum_k
\lambda_k(X(t))} \approx \frac{1}{\overline N} \ll 1,
\end{equation}
then the runtime needed to produce a single
exact sample path may be prohibitive when coupled with Monte Carlo techniques, and approximate methods may be desirable.

\subsection{Approximate methods.}
\label{sec:approximate_methods}
%Throughout the paper, we let $X$ denote the solution to \eqref{eq:RTC_exact}.  For approximate methods, w
There will be times we will wish to discuss an arbitrary approximation to $X$ or $X^N$, and other times  we will wish to consider specific approximations.  When we consider an arbitrary approximation we will simply denote the approximation as $Z$ or $Z^N$.  When we distinguish the Euler, midpoint, and Weak Trapezoidal approximations, the main approximations under consideration here, we will denote by $Z_E$, $Z_M$, and $Z_\T$ the respective approximations to $X$, and by $Z_E^N$, $Z_M^N$, and $Z_\T^N$ the respective approximations to $X^N$.  Throughout, our time-discretization parameter will be denoted by $h>0$.\footnote{Historically, the time discretization parameter for the methods described in this paper have been $\tau$, thus giving these methods the general name ``$\tau$-leaping methods.''  We choose to break from this tradition and denote our time-step by $h$ so as not to confuse $\tau$ with a stopping time.}

\subsubsection{Euler's method}
The Euler approximation, $Z_E$, to the model \eqref{eq:main} is the solution to 
\begin{equation}
  Z_E(t) = Z_E(0) + \sum_k Y_k \left( \int_0^t \lambda_k'(Z_E \circ \eta(s))
    ds  \right)\zeta_k, 
  \label{eq:Euler}
\end{equation}
where $\displaystyle \eta(s) \eqdef \left \lfloor \frac{s}{h} \right \rfloor h$, and all other notation is as before. Note that $Z_E(\eta(s)) = Z_{E}(t_n)$ if $t_n\le s < t_{n+1}$.  The basic algorithm for the simulation of \eqref{eq:Euler} up to a time of $T>0$ is the following.   For $x \ge 0$ we will write Poisson$(x)$ for a Poisson random variable with a parameter of $x$.

\begin{algorithm}[Euler's method] Fix $h>0$.  Set $Z_E (0) = x_0$, $t_0 = 0$,
  $n=0$ and repeat the following until $t_{n+1} = T$:
  \begin{enumerate}[$(i)$]
   \item Set $t_{n+1} = t_{n} + h$.  If $t_{n+1} \ge T$, set $t_{n+1} = T$ and $h = T - t_n$.
  \item For $k \in \{1,\dots,R\}$, let $\Lambda_k = \text{Poisson}(\lambda_k'(Z_E(t_n))h)$ be independent of each other and all previous random variables.
  \item Set $Z_E(t_{n+1}) = Z_E(t_{n}) + \sum_k \Lambda_k \zeta_k$.
   \item Set $n \leftarrow n+1$.
  \end{enumerate}
  \label{alg:Euler}
\end{algorithm}

The above algorithm is termed  \textit{explicit tau-leaping} in the biology and biochemistry literature \cite{Gill2001}.  Several improvements and modifications have been made to the basic
algorithm described above over the years in the context of biochemical processes.  Many of the improvements are concerned with how to choose the step-size adaptively \cite{Cao2006,
  Gill2003} and/or how to ensure that population values do not go
negative during the course of a simulation \cite{Anderson2007b,
  Cao2005, Chatterjee2005}, which is a relevant issue as population processes have a natural non-negativity constraint.  For the simulations carried out in Section \ref{sec:examples}, we choose to simply keep a fixed step-size and set any species that goes negative in the course of a jump to zero.

Defining the operator
\begin{equation}
  (\TG_z f)(x) \eqdef \sum_k \lambda_k'(z)(f(x + \zeta_k) - f(x)), 
  \label{eq:tau_gen}
\end{equation}
we see that for $t>0$ 
\begin{equation}
  \E f(Z_E(t)) = \E f(Z_E\circ \eta(t)) + \E \int_{\eta(t)}^t (\TG_{Z_E\circ
    \eta(t)} f)(Z_E(s)) ds, 
  \label{eq:tau_martingale}
\end{equation}
so long as the expectations exist.    
%Equation \eqref{eq:tau_martingale} points out why we care about the associated operators for each of our approximate methods:   they will be used to gain the necessary control over the difference $\E f(X(t)) - \E f(Z(t))$, called the \textit{weak error} of the approximation, which is the focus of our paper.
The scaled version of \eqref{eq:Euler}, which is an approximation to $X^N$ satisfying \eqref{eq:scaled}, is
\begin{equation}
   Z^N_E(t) = Z^N_E(0) + \sum_k Y_k \left( N^{\gamma} \int_0^t  N^{c_k} \lambda_k(Z_E^N \circ \eta(s))
    ds  \right)\zeta_k^N, 
    \label{eq:RTC_tau_scaled}
\end{equation}
where all notation is as before.
 Define the operator $\TG_z^N$ by
\begin{equation}
	\TG_{z}^N f(x) \eqdef (N^{\gamma} \lambda(z) \cdot \nabla^N) f(x).
	\label{eq:scaled_operator}
\end{equation}
If $Z^N_E$ satisfies \eqref{eq:RTC_tau_scaled},  then for all $t>0$
\begin{equation*}
  \E f(Z_E^N(t)) = \E f(Z_E^N ( \eta(t))) + \E \int_{\eta(t)}^t
  (\TG_{ Z_E^N ( \eta(t))}^N f)(Z_E^N(s)) ds,
\end{equation*}
so long as the expectations exist.

\subsubsection{Approximate midpoint method}

A midpoint type method was first described in \cite{Gill2001}\footnote{The midpoint method detailed in \cite{Gill2001} is actually a slight variant of the method described here.  In \cite{Gill2001} the approximate midpoint, called $\rho(z)$ above, is rounded to the nearest integer value.} and analyzed in \cite{AndGangKurtz2011}  and \cite{Li2011}.  Define the function
\begin{equation*}
  \rho(z) \eqdef z + \frac{1}{2}h\sum_k \lambda_k'(z) \zeta_k,
\end{equation*}
which computes an approximate midpoint for the system \eqref{eq:main} assuming the
state of the system is $z$ and the time-step is $h$.  Then define $Z_M$ to be the process that satisfies
\begin{equation}
  Z_M(t) = Z_M(0) + \sum_k Y_k \left( \int_0^t \lambda_k' \circ \rho
    ( Z_M \circ  \eta (s) ) ds  \right)\zeta_k. 
    \label{eq:pathwise_mdpt}
\end{equation}

The basic algorithm for the simulation of \eqref{eq:pathwise_mdpt} up to a time of $T>0$ is the following.   Note that only step $(ii)$ changes from Euler's method.

\begin{algorithm}[Midpoint method] Fix $h>0$.  Set $Z_M(0) = x_0$, $t_0 =
  0$, $n=0$ and repeat the following until $t_{n+1} = T$:
  \begin{enumerate}[$(i)$]
     \item Set $t_{n+1} = t_{n} + h$.  If $t_{n+1} \ge T$, set $t_{n+1} = T$ and $h = T - t_n$.
  \item For $k \in \{1,\dots,R\}$, let $\Lambda_k = \text{Poisson}(\lambda_k'\circ \rho(Z_M(t_n))h)$ be independent of each other and all previous random variables.
  \item Set $Z_M(t_{n+1}) = Z_M(t_{n}) + \sum_k \Lambda_k \zeta_k$.
   \item Set $n \leftarrow n+1$.
   \end{enumerate}
  \label{alg:mdpt}
\end{algorithm}

 For $\TG_z$ defined via
\eqref{eq:tau_gen}, any $t>0$, and $Z_M$ satisfying \eqref{eq:pathwise_mdpt}, we have
\begin{equation*}
  \E f(Z_M(t)) = \E f(Z_M \circ \eta(t)) + \E \int_{\eta(t)}^t
  (\TG_{\rho \circ Z_M \circ \eta(t)} f)(Z_M(s)) ds,
  %\label{eq:mdpt_martingale}
\end{equation*}
so long as the expectations exist. The scaled version of \eqref{eq:pathwise_mdpt}, which is an approximation to $X^N$ satisfying \eqref{eq:scaled}, is
\begin{equation}
   Z^N_M(t) = Z^N_M(0) + \sum_k Y_k \left( N^{\gamma} \int_0^t  N^{c_k} \lambda_k\circ \rho (Z_M^N \circ \eta(s))
    ds  \right)\zeta_k^N, 
    \label{eq:mdpt_scaled}
\end{equation}
where now
\begin{equation*}
	\rho(z) = z + \frac{1}{2}h N^{\gamma} \sum_k N^{c_k} \lambda_k(z) \zeta^N_k.
\end{equation*}
 While we should write $\rho^N$ in the above, we repress the ``$N$'' in this case for ease of notation.  For  $\TG_z^N$ defined via  \eqref{eq:scaled_operator}, and $Z^N_M$ satisfying \eqref{eq:mdpt_scaled}, we have
\begin{equation*}
  \E f(Z_M^N(t)) = \E f(Z_M^N ( \eta(t) ) ) + \E \int_{\eta(t)}^t
  (\TG_{ \rho ( Z_M^N \circ \eta(t))}^N f)(Z_M^N(s)) ds,
\end{equation*}
for all $t>0$, so long as the expectations exist.

\subsubsection{The weak trapezoidal method.}   
We will now describe a trapezoidal type algorithm to approximate the solutions of
\eqref{eq:main} and/or \eqref{eq:scaled}.  The method was originally introduced in the work of Anderson and Mattingly in the diffusive setting, where it is best understood by using a path-wise representation that incorporates space-time white noise processes, see \cite{AndMatt2011}.  It has independently been extended to the current jump setting in \cite{Li2011b} where it was studied in the classical scaling $(\gamma = 0, \alpha_i \equiv 1, c_k \equiv 1$, with the step size coupled to the system size similarly to the analysis in \cite{AndGangKurtz2011}). 
%Similarly, the method below is best understood in the setting of jump processes when a Poisson random measure representation is utilized as opposed to the more common random time change representation \eqref{eq:RTC_exact}.

  In the algorithm below, which simulates a path up to a time $T>0$,
  % for $j \in \{1,2\}$ and $k,n \ge 0$, where $k$ enumerates over the reactions, $y_{k,n,j}(x_{k,n,j})$ are independent Poisson random variables with parameter $x_{k,n,j}$.  
  it is notationally convenient to define $[x]^+ = x\vee 0 = \max\{x,0\}$.

\vspace{1ex}

\begin{algorithm}[Weak trapezoidal method] \label{alg:weaktrap}
Fix $h>0$.  Set $Z(0) = x_0$, $t_0 = 0$, and $n = 0$.  Fixing a \, $\theta \in (0,1)$, we define
 \begin{align}
   \alp_1 \eqdef \frac{1}{2}\frac{1}{\theta(1 -
     \theta)} %= \frac{1}{2}(\frac{1}{\theta} + \frac{1}{1 - \theta})
   \qquad\text{and}\quad \alp_2 \eqdef
   \frac{1}{2}\frac{(1-\theta)^2 + \theta^2}{\theta(1 - \theta)}\,.
   % = \frac{1}{2} (\frac{\theta}{1 - \theta} + \frac{1 -
   %   \theta}{\theta}).
   \label{eq:xi}
 \end{align}
We repeat the following steps until $t_{n+1} = T$, in which we first
 compute a $\theta$-midpoint $y^*$, and then the new value $Z_\T (t_{n+1})$:
 \begin{enumerate}[$(i)$]
      \item Set $t_{n+1} = t_{n} + h$.  If $t_{n+1} \ge T$, set $t_{n+1} = T$ and $h = T-t_n$.
  \item For $k \in \{1,\dots,R\}$, let $\Lambda_{k,1} = \text{Poisson}(\lambda_k'(Z_{\T}(t_n))\theta h)$ be independent of each other and all previous random variables.
  \item  Set $ \ym = Z_\T (t_{n}) +  \sum_{k} \Lambda_{k,1}\zeta_k$. 
    \item For $k \in \{1,\dots,R\}$, let $\Lambda_{k,2} = \text{Poisson}([\alp_1 \lambda_k'(y^*) - \alp_2 \lambda_k'(t_n)]^+ (1-\theta) h)$ be independent of each other and all previous random variables.
  \item Set $Z_{\T}(t_{n+1}) = \ym + \sum_k \Lambda_{k,2} \zeta_k$.
   \item Set $n \leftarrow n+1$.
 \end{enumerate}
 \end{algorithm}

\begin{remark}
  Notice that on the $(n+1)$st-step, $\ym$ is the Euler
  approximation to $X(nh + \theta h)$ starting from $Z_\T (t_n)$ at
  time $nh$.
\end{remark}
\begin{remark}
  Notice that for all $\theta \in (0,1)$ one has $\xi_1 > \xi_2$
  and $\xi_1-\xi_2=1$.   
\end{remark}

We define the operator $\TG_{z_1,z_2}$ by
\begin{equation*}
   (\TG_{z_1,z_2} f)(x) \eqdef \sum_k  [\alp_1 \lambda_k'(z_1) - \alp_2\lambda_k'(z_2)]^+ (f(x + \zeta_k) - f(x)).
\end{equation*}
Then, for $\eta(t) \le t \le \eta(t) + \theta h$, the process $Z_{\T}$ satisfies
\begin{equation*}
  \E f(Z_\T(t)) = \E f(Z_\T ( \eta(t))) + \E \int_{\eta(t)}^t
  (\TG_{Z_\T ( \eta(t))} f)(Z_\T(s)) ds,
 % \label{eq:WT1}
\end{equation*}
where we recall that $\TG_z$ is defined via
\eqref{eq:tau_gen}, 
and for $\eta(t) + \theta h \le t \le \eta(t) + h$, the process $Z_{\T}$ satisfies
\begin{equation*}
  \E f(Z_\T (t)) = \E f(Z_\T (\eta(t) + \theta h)) + \E \int_{\eta(t) + \theta h}^t
  (\TG_{Z_\T (\eta(t) + \theta h), Z_\T ( \eta(t))} f)(Z_\T(s)) ds.
\end{equation*}

 Finally, define the operator $\TG_{z_1,z_2}^N$ by
\begin{equation*}
   (\TG_{z_1,z_2}^N f)(x) \eqdef  (N^{\gamma} [\alp_1 \lambda(z_1) - \alp_2\lambda(z_2)]^+ \cdot \nabla^N ) f(x),
\end{equation*}
where for some $\theta \in (0,1)$, $\xi_1$ and $\xi_2$ satisfy \eqref{eq:xi}, and for $v \in \R^d$ the $i$th component of $v^+$ is $[v_i]^+ = \max\{v_i,0\}$.
Then, if $Z^N_\T$ represents the approximation to \eqref{eq:scaled} via the weak trapezoidal method,   for $\eta(t) \le t < \eta(t) + \theta h$
\begin{equation*}
  \E f(Z^N_\T(t)) = \E f(Z^N_\T ( \eta(t))) + \E \int_{\eta(t)}^t
  (\TG^N_{Z^N_\T ( \eta(t))} f)(Z^N_\T(s)) ds,
\end{equation*}
whereas for $\eta(t) + \theta h \le t < \eta(t) + h$
\begin{equation*}
  \E f(Z^N_\T(t)) = \E f(Z^N_\T (\eta(t) + \theta h)) + \E \int_{\eta(t) + \theta h}^t
  (\TG^N_{Z^N_\T(\eta(t) + \theta h), Z^N_\T ( \eta(t))} f)(Z^N_\T(s)) ds.
\end{equation*}

%%%%%%%%%%%%%%%%%%%%%%%%%%%%

\section{Global Error from Local Error}
\label{sec:analysis}
Throughout the section, we will denote the vector valued process whose $i$th component satisfies \eqref{eq:scaled} by  $X^N$, and denote 
an arbitrary approximate process via $Z^N$. 
Also, we define the following semigroup operators acting on $f \in C_0(\R^d , \RR)$:
\begin{align}
\cP_t f(x) &\eqdef \E_{x} f(X^N(t)) \label{eq:semi_exact} \\
P_t f(x) &\eqdef \E_{x} f(Z^N(t) ),\notag
\end{align}
where for ease of notation we choose not to incorporate the notation $N$ into either $\cP_t$ or $P_t$.  Returning to the notation introduced in Section \ref{sec:intro}, we note that
\begin{equation*}
	B_f(Z^N,x,t) = \E_x f(X^N(t)) - \E_x f(Z^N(t)) = (\cP_t - P_t)f(x).
\end{equation*}
We may therefore interpret the difference between the above two operators, for  $t \in [0,T]$, as the weak error, or bias, of the approximate 
process $Z^N$ on the interval $[0,T]$.  As $h>0$ is our time-step, we note that  $B_f(Z^N,x,h)  = (\cP_h - P_h)f(x)$ is the 
one step local error.

\begin{definition}
Let $n$ be an arbitrary non-negative integer, and $\cM$ be a $m$ dimensional vector of $C(\RR^d,\RR)$ valued operators on 
$C(\RR^d , \RR)$,  with its $\ell$th coordinate denoted by $ \cM_{\ell}$.  Then we define 
\begin{equation*}
   \| f \|_n^\cM =  \sup \left \{ \left \| \left (\prod_{i=1}^p \cM_{\ell_i} \right)   f  \right \|_\infty,  1 \leq  \ell_i  \leq  m, ~p\leq n  \right\}.
\end{equation*}
\label{def:op_norm}
\end{definition}

\noindent For example, if $j, k, \ell \in \{1, ..., R \} $ then 
$$| (\nabla^N_j \nabla^N_k \nabla^N_\ell f)(x) |  \leq \|  f\|^{\nabla^N}_3,$$
where we recall that  $\nabla^N$ is defined in \eqref{eq:operators}.
Note that, for any $\cM, $
\begin{equation}
\| f\|^\cM_0 = \| f\|_0 = \| f\|_{\infty}.
\label{eq:0norm}
\end{equation}
Also note that, by definition, for $n \ge 0$
$$ \| f \|_n^{\cM}  \leq  \| f \|_{n+1}^{\cM}.$$

\begin{definition}
Suppose $\cM : C(\RR^d, \RR) \to C(\RR^d, \RR^R)$  and 
 $Q : C(\RR^d, \RR) \to C(\RR^d, \RR)$ are operators. Then define
\begin{align*}
\| Q  \|_{j \to \ell}^\cM &\eqdef \sup_{f \in C^j,  f \neq 0} \frac{\| Q f \|_\ell^{\cM}}{\|f \|_j^{\cM}}. 
\end{align*} %\label{opnorm}
\end{definition}

Note that as stated in the Introduction, the purpose of this paper is to derive bounds for the global weak error of the different approximate processes, which, due to \eqref{eq:0norm}, consists of deriving bounds for
$\|(P_h^n  - \cP_{nh}) \|^{\cM}_{m \to 0}$, for an appropriately defined $\cM$ and a reasonable choice of $m\ge 0$.   Theorem \ref{thm:LG} below quantifies how the global error $\|(P_h^n  - \cP_{nh}) \|_{m \to 0}^\cM$ can  be 
bounded using the one-step local error $\|P_h - \cP_h  \|^{\cM}_{m\to 0}$.  In Section \ref{sec:local}, we will  derive the requisite bounds for the 
local error for each of the three methods. 

\begin{thm}
Let $\cM$ be a $C(\R^d,\R^R)$ valued operator on 
$C(\RR^d , \RR)$. 
%Suppose $Z$ is an approximation of the process $X$ with 
%$$P_h f (x) \eqdef E_{x}[f(Z(h))].$$ 
Then for any $n,m \ge 0$,  and $h>0$
$$\|(P_h^n  - \cP_{nh}) \|^\cM_{m \to 0}  = O (n\ \| P_h - \cP_h   \|^{\cM}_{m \to 0} \max_{\ell \in \{1, ... , n \} }\{\| \cP_{\ell h }\|_{m \to m}^\cM \})   $$
\label{thm:LG}
\end{thm}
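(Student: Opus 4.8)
The plan is to use the standard "one-step-error-telescopes-to-global-error" argument, but carried out entirely within the $\|\cdot\|_n^\cM$ machinery introduced above. First I would write the telescoping identity
\begin{equation*}
	P_h^n - \cP_{nh} = \sum_{\ell=0}^{n-1} P_h^{\,n-1-\ell}\,(P_h - \cP_h)\,\cP_{\ell h},
\end{equation*}
which follows from inserting the intermediate terms $P_h^{\,n-1-\ell}\cP_{(\ell+1)h}$ and using the semigroup property $\cP_{(\ell+1)h} = \cP_h \cP_{\ell h}$ for the exact process (so that consecutive summands share a boundary term). Applying $\|\cdot\|_{m\to 0}^\cM$ and the triangle inequality gives
\begin{equation*}
	\|P_h^n - \cP_{nh}\|_{m\to 0}^\cM \;\le\; \sum_{\ell=0}^{n-1} \big\| P_h^{\,n-1-\ell}\,(P_h - \cP_h)\,\cP_{\ell h}\big\|_{m\to 0}^\cM.
\end{equation*}

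The next step is to split each summand using sub-multiplicativity of the operator norms $\|\cdot\|_{j\to\ell}^\cM$ along a chain of intermediate smoothness indices: $\cP_{\ell h}$ maps $C^m$ to $C^m$ (bounded by $\|\cP_{\ell h}\|_{m\to m}^\cM$), then $(P_h-\cP_h)$ maps $C^m$ to $C^0$ (bounded by $\|P_h-\cP_h\|_{m\to 0}^\cM$), and finally $P_h^{\,n-1-\ell}$ must be shown to act as a contraction (or at least a bounded operator with constant $O(1)$) from $C^0$ to $C^0$. For the last point I would invoke that $P_h$ is (essentially) a Markov transition operator — or a sum of such, in the case of the multi-stage weak trapezoidal method — and hence $\|P_h g\|_\infty \le \|g\|_\infty$, giving $\|P_h^{\,n-1-\ell}\|_{0\to 0}^\cM \le 1$ by \eqref{eq:0norm}. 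Bounding each of the $n$ summands by $\|P_h-\cP_h\|_{m\to 0}^\cM \cdot \max_{\ell\in\{1,\dots,n\}}\|\cP_{\ell h}\|_{m\to m}^\cM$ and summing yields the claimed $O\big(n\,\|P_h-\cP_h\|_{m\to 0}^\cM \max_\ell \|\cP_{\ell h}\|_{m\to m}^\cM\big)$ bound.

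I expect the main obstacle to be the step controlling $P_h^{\,n-1-\ell}$ from $C^0$ to $C^0$: one must be careful that the approximate evolution operator $P_h$ really is a contraction in sup-norm, which is transparent for Euler's method and the midpoint method (they are genuine one-step Markov kernels once the Poisson increments are taken) but requires a small argument for the weak trapezoidal method, where $P_h$ is a composition of two sub-probability-type kernels — the intermediate $\theta$-step and the correction step with the truncated rate $[\alp_1\lambda_k'(y^*) - \alp_2\lambda_k'(t_n)]^+$. Because each stage is itself driven by independent Poisson random variables conditioned on the previous state, each stage is a genuine Markov kernel, so the composition is too, and the sup-norm contraction persists; I would spell this out using the martingale/Dynkin identities for $Z_\T$ recorded just before this section. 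A secondary subtlety worth a remark is that $\|\cdot\|_{m\to 0}^\cM$ and $\|\cdot\|_{m\to m}^\cM$ use the operator collection $\cM$ only on the target side of each factor, so one should verify that the chained composition is compatible — i.e., that $\|Q_1 Q_2\|_{m\to 0}^\cM \le \|Q_1\|_{0\to 0}^\cM \|Q_2\|_{m\to 0}^\cM$ etc. — which is immediate from the definitions but should be stated to keep the bookkeeping honest.
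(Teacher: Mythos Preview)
Your proposal is correct and is essentially identical to the paper's argument: the paper uses the same telescoping decomposition (reindexed as $\sum_{j=1}^n P_h^{j-1}(P_h-\cP_h)\cP_{h(n-j)}$), the same chain of operator-norm inequalities, and the one-line observation that $\|P_h\|_{0\to 0}\le 1$ because $P_h$ is a Markov kernel. Your additional remarks about verifying the contraction for the two-stage weak trapezoidal step and checking sub-multiplicativity of $\|\cdot\|_{j\to\ell}^\cM$ are extra care the paper simply takes for granted.
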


\begin{proof} 
Let $f \in C_0(\R^d,\R)$.
Note that, since $\| g \|_0 = \| g  \|_0^{\cM} $ for any $g$,  
\begin{align*}
\| P_{h}^{j-1} \|_{0 \to 0}^{\cM}  \| P_{h} - \cP_h \|_{m \to 0}^{\cM}  &=\| P_{h}^{j-1} \|_{0 \to 0} \| P_{h} - \cP_h \|_{m \to 0}^{\cM} .  
\end{align*}
With this in mind
\begin{align*}
\begin{split}
\| (P_{h}^n - \cP_{nh} ) f \|_{0 }  &=\big \| \sum_{j=1}^{n}  (P_{h}^j \cP_{h(n-j)} - P^{j-1}_h \cP_{h(n-j+1)} ) f \big\|_0 \\
&\le \sum_{j=1}^{n} \| P_{h}^{j-1} (P_h - \cP_h) \cP_{h(n-j)}  f \|_{0}  \\
%&\leq \sum_{j=1}^{n-1} \frac{\| P_{h}^j (P - \cP) \cP_{h(n-j)}  f \|_0}{ \|(P - \cP) \cP_{h(n-j)} f \|_0}                \frac{  \|(P - \cP) \cP_{h(n-j)} f \|_0  }{  \| \cP_{h(n-j)} f \|_3}   \| \cP_{h(n-j)} f  \|_3\\
&\leq \sum_{j=1}^{n} \| P_{h}^{j-1} \|_{0 \to 0}  \| P_{h} - \cP_h \|_{m \to 0}^{\cM}  \| \cP_{h(n-j)}\|_{m \to m}^{\cM} \|f \|_m^{\cM} .
%\label{global} 
\end{split}
\end{align*}
Since $P_h$ is a contraction, i.e. $\| P_{h} \|_{0 \to 0}\leq 1$, the result is shown.
\end{proof}

The following result, where $\nabla^N$ replaces $\cM$ in Theorem \ref{thm:LG}, is now immediate.
\begin{cor}\label{thm:global_bound}
Under the same assumptions of Theorem \ref{thm:LG} and with $f \in C_0^m(\R^d,\R)$, 
\begin{equation*}
   \|(P_h^n  - \cP_{nh}) f \|_{0}^{\nabla^N} = O (n \|P_h - \cP_h  \|^{\nabla^N}_{m\to 0}  \max_{\ell \in \{1,\dots,n\}}  \{ \| \cP_{\ell h} f \|_m^{\nabla^N}  \} ). 
\end{equation*}
\end{cor}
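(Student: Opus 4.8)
The plan is to deduce Corollary \ref{thm:global_bound} directly from Theorem \ref{thm:LG} by specializing the abstract operator $\cM$ to the concrete vector of operators $\nabla^N = [\nabla_1^N, \dots, \nabla_R^N]$ defined in \eqref{eq:operators}. Since $\nabla^N$ is a $C(\R^d,\R^R)$-valued operator acting on $C(\R^d,\R)$ — exactly the type of object allowed as $\cM$ in Theorem \ref{thm:LG} — the hypotheses of the theorem are satisfied verbatim with $\cM = \nabla^N$, so no new analytic work is required; this is a pure substitution.

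First I would recall the statement of Theorem \ref{thm:LG} with $\cM = \nabla^N$: for any $n, m \ge 0$ and $h > 0$,
\begin{equation*}
\|(P_h^n - \cP_{nh})\|^{\nabla^N}_{m \to 0} = O\!\left(n\,\|P_h - \cP_h\|^{\nabla^N}_{m \to 0}\,\max_{\ell \in \{1,\dots,n\}}\{\|\cP_{\ell h}\|^{\nabla^N}_{m \to m}\}\right).
\end{equation*}
Then I would unwind the operator-norm notation to the pointwise form claimed in the corollary. For a fixed $f \in C_0^m(\R^d,\R)$ with $f \ne 0$, the definition of $\|\cdot\|^{\cM}_{j \to \ell}$ gives $\|(P_h^n - \cP_{nh})f\|_0^{\nabla^N} \le \|(P_h^n - \cP_{nh})\|^{\nabla^N}_{m \to 0}\,\|f\|_m^{\nabla^N}$, and similarly $\|\cP_{\ell h}\|^{\nabla^N}_{m \to m}\,\|f\|_m^{\nabla^N} \ge \|\cP_{\ell h} f\|_m^{\nabla^N}$ by definition of the operator norm as a supremum over all admissible functions. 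Combining these two estimates with the theorem converts the operator-norm bound into the stated bound involving $\|f\|_m^{\nabla^N}$ on the left and $\|\cP_{\ell h} f\|_m^{\nabla^N}$ inside the maximum; the factor $\|f\|_m^{\nabla^N}$ appears on both sides of the resulting inequality and so is absorbed into the $O(\cdot)$, leaving exactly the assertion of the corollary. (The case $f = 0$ is trivial since both sides vanish.)

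There is essentially no obstacle here — the corollary is labeled "immediate" in the text and that is accurate. The only point requiring a word of care is the direction of the inequality when passing from the operator norm $\|\cP_{\ell h}\|^{\nabla^N}_{m \to m}$ to the pointwise quantity $\|\cP_{\ell h} f\|_m^{\nabla^N}$: since $\|\cP_{\ell h}\|^{\nabla^N}_{m \to m} = \sup_{g \in C^m, g \ne 0} \|\cP_{\ell h} g\|_m^{\nabla^N}/\|g\|_m^{\nabla^N}$ is a supremum, we only get $\|\cP_{\ell h} f\|_m^{\nabla^N} \le \|\cP_{\ell h}\|^{\nabla^N}_{m \to m}\|f\|_m^{\nabla^N}$, i.e. the theorem's bound is at least as large as the corollary's — which is the correct direction for an $O(\cdot)$ upper bound. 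One should also note in passing that $\|\cdot\|_0^{\nabla^N} = \|\cdot\|_0 = \|\cdot\|_\infty$ by \eqref{eq:0norm}, which is why the left-hand side of the corollary is written with the $\nabla^N$ superscript without loss of generality. With these remarks in place the proof is a two-line deduction.
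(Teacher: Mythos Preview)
Your argument contains a genuine gap in the direction of the key inequality. You correctly observe that
\[
\|\cP_{\ell h} f\|_m^{\nabla^N} \le \|\cP_{\ell h}\|^{\nabla^N}_{m \to m}\,\|f\|_m^{\nabla^N},
\]
but then conclude that ``the theorem's bound is at least as large as the corollary's --- which is the correct direction for an $O(\cdot)$ upper bound.'' This is backwards. From the theorem you obtain
\[
\|(P_h^n - \cP_{nh})f\|_0^{\nabla^N} = O\!\left(n\,\|P_h - \cP_h\|^{\nabla^N}_{m \to 0}\,\max_{\ell}\{\|\cP_{\ell h}\|^{\nabla^N}_{m \to m}\}\,\|f\|_m^{\nabla^N}\right),
\]
and you want to replace the right-hand factor $\|\cP_{\ell h}\|^{\nabla^N}_{m \to m}\,\|f\|_m^{\nabla^N}$ by the \emph{smaller} quantity $\|\cP_{\ell h} f\|_m^{\nabla^N}$. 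But an $O(A)$ bound with $B \le A$ does not yield an $O(B)$ bound; you would need $A \le B$. So the corollary, which is sharper than the theorem applied pointwise, does not follow from the \emph{statement} of Theorem~\ref{thm:LG} by substitution.

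The fix is to go back to the \emph{proof} of Theorem~\ref{thm:LG} rather than its statement. That proof already works with a fixed $f$ and establishes
\[
\|(P_h^n - \cP_{nh})f\|_0 \le \sum_{j=1}^n \|P_h^{j-1}\|_{0\to 0}\,\|P_h - \cP_h\|^{\cM}_{m\to 0}\,\|\cP_{h(n-j)} f\|_m^{\cM},
\]
before taking the additional step $\|\cP_{h(n-j)} f\|_m^{\cM} \le \|\cP_{h(n-j)}\|^{\cM}_{m\to m}\|f\|_m^{\cM}$. Stopping at this penultimate line, setting $\cM = \nabla^N$, using $\|P_h^{j-1}\|_{0\to 0}\le 1$, and bounding each summand by the maximum over $\ell$ gives the corollary directly. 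This is presumably what ``immediate'' means here: immediate from the displayed chain of inequalities in the proof, not from the final operator-norm formulation.
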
 

\noindent The following generalization, which allows for variable step sizes, is straightforward.  

\begin{cor}
For $f \in C_0^m(\R^d,\R)$
\begin{equation*}
   \|\E_x f(Z_{t_n})  -  \E_x f(X_{t_n}) \|_\infty = O (n \max_{i=1,...,n}\{ \|P_{h_i} - \cP_{h_i}  \|^{\nabla^N}_{m\to 0} \} \max_{\ell \in \{1,\dots,n\}}  \{ \| \cP_{t_\ell} f \|_m^{\nabla^N}  \} ). 
\end{equation*}
\end{cor}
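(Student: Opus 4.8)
The plan is to rerun the telescoping argument from the proof of Theorem~\ref{thm:LG}, replacing the $n$th powers $P_h^n$ and $\cP_{nh}$ by ordered compositions of one-step operators of varying step size. First I would fix a partition $0 = t_0 < t_1 < \cdots < t_n$ with increments $h_j \eqdef t_j - t_{j-1}$, and let $P_{h_j}$ denote the one-step transition operator of the approximate scheme run with step size $h_j$ (the same abuse of notation used in the statement: $P_{h_j} g(x) = \E_x g(Z^N(h_j))$ for the process restarted at $x$ and taking one step of length $h_j$). By the Markov property of the approximate chain one has $\E_x f(Z^N_{t_n}) = (P_{h_1} P_{h_2}\cdots P_{h_n} f)(x)$, while the semigroup property of the exact process gives $\E_x f(X^N_{t_n}) = (\cP_{h_1}\cP_{h_2}\cdots\cP_{h_n} f)(x) = (\cP_{t_n} f)(x)$; the ordering of the factors is forced in the $P$-product but immaterial in the $\cP$-product.

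Next I would telescope over the first index at which a factor $P_{h_j}$ is swapped for $\cP_{h_j}$:
\begin{equation*}
P_{h_1}\cdots P_{h_n} - \cP_{h_1}\cdots \cP_{h_n} = \sum_{j=1}^{n} P_{h_1}\cdots P_{h_{j-1}}\,(P_{h_j} - \cP_{h_j})\,\cP_{h_{j+1}}\cdots \cP_{h_n}.
\end{equation*}
Applying this to $f$ and using $\|\cdot\|_0 = \|\cdot\|_\infty$, I would then invoke, exactly as in the proofs of Theorem~\ref{thm:LG} and Corollary~\ref{thm:global_bound}, the three facts: (i) each $P_{h_i}$ is a contraction on $(C_0(\R^d,\R),\|\cdot\|_\infty)$, hence $\|P_{h_1}\cdots P_{h_{j-1}}\|_{0\to0}\le 1$; (ii) $\cP_{h_{j+1}}\cdots\cP_{h_n} f = \cP_{t_n - t_j} f$, by the exact semigroup property; and (iii) the definition of $\|\cdot\|^{\nabla^N}_{m\to0}$, which gives $\|(P_{h_j}-\cP_{h_j})\cP_{t_n - t_j}f\|_0 \le \|P_{h_j}-\cP_{h_j}\|^{\nabla^N}_{m\to0}\,\|\cP_{t_n - t_j}f\|^{\nabla^N}_m$. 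Bounding $\|P_{h_j}-\cP_{h_j}\|^{\nabla^N}_{m\to0}\le \max_{i=1,\dots,n}\|P_{h_i}-\cP_{h_i}\|^{\nabla^N}_{m\to0}$ and $\|\cP_{t_n - t_j}f\|^{\nabla^N}_m \le \max_{\ell=1,\dots,n}\|\cP_{t_\ell}f\|^{\nabla^N}_m$, and summing the $n$ terms, then yields the claimed $O(\cdot)$ bound.

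I expect no genuine obstacle here; as the paper says, the result is straightforward, and the argument is a direct adaptation of the two preceding results. The only points requiring care are bookkeeping ones: the variable step size destroys the clean ``$P_h^n$ versus $\cP_{nh}$'' power structure, so each step's operator must be carried individually through the telescoping; the contraction estimate $\|P_{h_i}\|_{0\to0}\le1$ must be applied uniformly over the steps, which is fine since it holds for every step size; and the intermediate semigroup times coming out of the telescoping are the partial sums $t_n - t_j = h_{j+1}+\cdots+h_n$ (the mesh points of the partition with the same increments listed in reverse order), rather than the $t_\ell$ themselves. Since the corollary is asserted for an arbitrary partition this is a harmless relabelling, and I would simply remark on it when passing from $\max_j\|\cP_{t_n - t_j}f\|^{\nabla^N}_m$ to $\max_{\ell}\|\cP_{t_\ell}f\|^{\nabla^N}_m$.
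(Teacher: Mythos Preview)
Your proposal is correct and is exactly the intended argument: the paper does not give a proof, calling the generalization ``straightforward,'' and the natural reading is precisely the variable-step telescoping of Theorem~\ref{thm:LG} that you carry out. Your observation that the intermediate semigroup times are the reversed partial sums $t_n - t_j$ rather than the $t_\ell$ themselves is apt; the same harmless index mismatch already appears in the constant-step case (the proof of Theorem~\ref{thm:LG} produces $\cP_{h(n-j)}$ for $j=1,\dots,n$, i.e.\ times $0,\dots,(n-1)h$, while the statement takes the max over $\ell=1,\dots,n$), and in the paper's intended application the bound $\|\cP_s f\|^{\nabla^N}_m \le \|f\|^{\nabla^N}_m e^{N^\gamma C_m s}$ from Theorem~\ref{thm:semi_bound} is monotone in $s$, so either maximum is controlled by $\|\cP_{t_n} f\|^{\nabla^N}_m$.
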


Thus, once we compute the local one step error $\|P_h - \cP_h  \|^{\nabla^N}_{m\to 0}$ for an approximate process,  we have a bound on the global weak error that depends only on the 
semigroup $\cP_t$ of the
original process.  We will delay discussion of $\| \cP_{t} f \|_m^{\nabla^N} $ for now, as this term is independent of the approximate process.
  Instead, in the next section we provide bounds for $ \|P_h - \cP_h  \|^{\nabla^N}_{m\to 0}$ for each of the three methods described in Section \ref{sec:numerics}.

%\begin{thm}
%Choose $m,n \ge 0$.  Then for any $m > 0$, there exists $C_m$ such that 
%\begin{equation*}
%   \|(P_h^n  - \cP_{nh}) \|_{m \to 0}^{\nabla^N} = O (n \|P_h - \cP_h  \|^{\nabla^N}_{m\to 0}   e^{C_m N^{\gamma} t}) ,
%\end{equation*}
%where 

%  $$	C_n = 2\left( \|\lambda\|^{\nabla^N}_1 n \ R  + R(n- 1) \|\lambda\|_{n}^{\nabla^N}\right). $$
%  \label{thm:global_bound}
%\end{thm} 

\section{Local errors}
\label{sec:local}

Section \ref{sec:calculus} will present some necessary propositions and lemmas.  Sections \ref{sec:euler}, \ref{sec:mdpt}, and \ref{sec:trap} will present the local analyses of the Euler, midpoint, and weak trapezoidal methods, respectively. 
\subsection{Analytical tools}
\label{sec:calculus}

%\begin{definition}
%Let $n$ be arbitrary positive integer, and $\cM$ be a $m$ dimensional vector of $R$ valued operators with its $\ell$th coordinate denoted by $ \cM_{\ell}$, then define 
%$$\| f \|_n^\cM =  \max\left \{ \left \| \left (\prod_{i=1}^p \cM_{\ell_i} \right)   f  \right \|_\infty,  1 \leq  \ell_i  \leq  m, ~p\leq n  \right\}.$$
%\label{def:op_norm}
%\end{definition}

%
%\begin{definition}
%Denote the $j$th directional derivative of $f$ into the direction $[v_1, v_2, ... v_j ]$ by $f'[v_1, ..., v_j]$ and 
%\begin{align}
%\| f \|_j   \eqdef \sup_{x}   \{  f'[v_1, ...., v_j](x),  \|v\| = 1    \}  
%\label{eq:norm}
%\end{align}
%\end{definition}

\begin{prop}
Let $f  \in C^1_0(\RR^d , \RR^R)$.  For any $k \in \{1, \dots, R\}$  
\begin{equation*}
   \nabla^N_k  f \in O (N^{c_k - \m_k } \|f  \|_1 ) \subset  O (1). 
\end{equation*}
In particular, $N^{-c_k}\nabla^N_k  f  $ is bounded.   \label{derivbound}
\end{prop}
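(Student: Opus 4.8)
The plan is to reduce the statement to a one-line mean value estimate and then bookkeep the powers of $N$. First I would write, using the fundamental theorem of calculus along the segment joining $x$ and $x + \zeta_k^{N}$,
\begin{equation*}
  f(x + \zeta_k^{N}) - f(x) = \int_0^1 f'[\zeta_k^{N}](x + s\zeta_k^{N})\, ds ,
\end{equation*}
so that, by linearity of the directional derivative in its direction argument and the definition \eqref{eq:norm} of $\|\cdot\|_1$,
\begin{equation*}
  \bigl| f(x + \zeta_k^{N}) - f(x) \bigr| \le |\zeta_k^{N}|\, \|f\|_1 = N^{-\m_k}\|f\|_1 ,
\end{equation*}
where the last equality is the defining relation $|\zeta_k^N| = N^{-\m_k}$. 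If one wants to avoid any differentiability bookkeeping for the $\R^R$-valued $f$, this step is applied componentwise and one takes the maximum over components, which does not affect the constants.

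Next I would multiply through by $N^{c_k}$. By the definition \eqref{delta} of $\nabla^N_k$ this gives, uniformly in $x$,
\begin{equation*}
  |\nabla^N_k f(x)| = N^{c_k}\bigl| f(x+\zeta_k^{N}) - f(x)\bigr| \le N^{c_k - \m_k}\|f\|_1 ,
\end{equation*}
which is exactly the assertion $\nabla^N_k f \in O(N^{c_k - \m_k}\|f\|_1)$. Since $c_k \le \m_k$ by construction and $N \ge 1$, we have $N^{c_k - \m_k} \le 1$, so $O(N^{c_k-\m_k}\|f\|_1) \subset O(1)$, giving the inclusion. For the final ``in particular'' claim, observe that $N^{-c_k}\nabla^N_k f(x) = f(x+\zeta_k^{N}) - f(x)$, which is bounded by $2\|f\|_\infty < \infty$ because $f \in C_0$ (and in fact by $N^{-\m_k}\|f\|_1$ from the estimate above).

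I do not expect a genuine obstacle here: the argument is a routine first-order Taylor/mean value bound, and the only mild care needed is (i) making sure the norm $\|f\|_1$ from \eqref{eq:norm} is the correct object to pull out — handled by the homogeneity of $f'[\,\cdot\,]$ in the direction — and (ii) the vector-valued range $\R^R$, handled by working componentwise.
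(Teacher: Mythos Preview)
Your argument is correct and is essentially the same as the paper's: the paper simply states that $|f(x+w)-f(x)|\le |w|\,\|f\|_1$ for any $w\in\R^d$, which is exactly the mean value estimate you derive via the integral representation. Your write-up is more detailed (spelling out the fundamental theorem of calculus and the bookkeeping of $N^{c_k-\m_k}$), but the underlying idea is identical.
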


\begin{proof}
The result follows from the fact that for any $w \in \R^d$
\begin{equation*}
	|f(x + w) - f(x)| \le |w| \|f\|_1.
\end{equation*}
\end{proof}

Define,  for any multi-subset $I$ of $\{1, ... , R \},  $
$$\nabla^N_I  f \eqdef  \left \{ ( \prod_{i=1}^{|I|} \nabla^N_{\ell_i}  ) f  \right \} ,$$
so that,
$$\| f\|_n^{\nabla^N} = \sup_{|I| \leq n} \| \nabla^N_I  f \|_{\infty}.$$

\begin{prop}\label{prop:deriv}
Let $f  \in C^j_0(\RR^d , \RR^R)$. Then, 
\begin{equation*}
	\|f\|_j^{\nabla^N} = O(\|f\|_j).
\end{equation*}
%$$  \| f \|_j^{\nabla^N}  \in O(N^{(c_k - \min\{ \alpha_i \})j } \| f \|_j).$$   \\ 
\end{prop}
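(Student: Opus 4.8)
The plan is to prove $\|f\|_j^{\nabla^N} = O(\|f\|_j)$ by showing that for any multi-subset $I \subseteq \{1,\dots,R\}$ with $|I| \le j$, the iterated operator $\nabla^N_I f$ is bounded by a constant (independent of $N$) times $\|f\|_{|I|} \le \|f\|_j$. Recall from \eqref{delta} that $\nabla^N_k g(x) = N^{c_k}(g(x+\zeta_k^N) - g(x))$, so a single application can be written as $\nabla^N_k g(x) = N^{c_k}\int_0^1 g'[\zeta_k^N](x + s\zeta_k^N)\,ds$, and since $|\zeta_k^N| = O(N^{-c_k})$ by \eqref{eq:cond1}, the prefactor $N^{c_k}$ is exactly cancelled: $\|\nabla^N_k g\|_\infty \le N^{c_k}|\zeta_k^N| \, \|g\|_1 = O(\|g\|_1)$. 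This is essentially the content of Proposition \ref{derivbound}.

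The main step is to iterate this observation $j$ times. First I would establish, by induction on $p$, the estimate
\begin{equation*}
\left\| \left(\prod_{i=1}^p \nabla^N_{\ell_i}\right) f \right\|_\infty \le C^p \, \|f\|_p,
\end{equation*}
where $C = \sup_k N^{c_k}|\zeta_k^N| < \infty$ is independent of $N$ by \eqref{eq:cond1}. The base case $p=0$ is $\|f\|_\infty \le \|f\|_0$, which is the definition in \eqref{eq:norm}. For the inductive step, write $g = \left(\prod_{i=2}^p \nabla^N_{\ell_i}\right)f$; the difficulty is that the inductive hypothesis controls $\|g\|_\infty$ but applying $\nabla^N_{\ell_1}$ via the integral representation requires controlling $\|g\|_1$, i.e.\ the first derivatives of $g$. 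The resolution is that differentiation commutes with the finite-difference operator $\nabla^N_k$: since $\nabla^N_k$ acts by translation and scalar multiplication, $(\nabla^N_k g)'[v] = \nabla^N_k (g'[v])$, so $\|\nabla^N_k g\|_1 \le C\|g\|_2$, and more generally $\|\nabla^N_I g\|_q = O(\|g\|_{|I|+q})$. Hence one should actually prove the slightly stronger statement that $\|\nabla^N_I f\|_q \le C^{|I|}\|f\|_{|I|+q}$ for all $q \ge 0$, which makes the induction go through cleanly: applying $\nabla^N_{\ell_1}$ costs one factor of $C$ and shifts the derivative index up by one.

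The hard part, such as it is, is bookkeeping: keeping track of the fact that each application of $\nabla^N_k$ both consumes one "unit of smoothness" (via the mean-value / integral representation) and commutes past any derivatives already present, so that after $p$ applications one needs $f \in C^p$ and picks up $\|f\|_p$ rather than some higher norm. There is no genuine analytic obstacle — the key cancellation $N^{c_k}|\zeta_k^N| = O(1)$ is given, and the commutation of $\nabla^N_k$ with directional derivatives is immediate from the definition. Once the induction is complete, taking the supremum over all $I$ with $|I| \le j$ gives $\|f\|_j^{\nabla^N} = \sup_{|I|\le j}\|\nabla^N_I f\|_\infty \le \max_{0 \le p \le j} C^p \, \|f\|_j = O(\|f\|_j)$, with the implied constant $\max\{1, C^j\}$ depending only on $j$, $R$, and the fixed scaling data, not on $N$.
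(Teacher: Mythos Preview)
Your argument is correct and close in spirit to the paper's: both rest on iterating the one-step bound $\|\nabla^N_k g\|_\infty \le N^{c_k}|\zeta_k^N|\,\|g\|_1 = O(\|g\|_1)$ from Proposition~\ref{derivbound}. The paper, however, proceeds by a case split on the multi-index $I$: when every $k \in I$ has $m_k > 0$ it declares the bound ``clear'' (implicitly the same mean-value iteration you carry out), and when some $m_k = 0$ (so $c_k \le m_k = 0$ and $N^{c_k}\le 1$) it instead uses the crude estimate $\|\nabla^N_I f\|_\infty \le 2N^{c_k}\|\nabla^N_{I\setminus k} f\|_\infty = O(\|f\|_{j-1})$ and inducts on $|I|$. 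Your strengthened inductive hypothesis $\|\nabla^N_I f\|_q \le C^{|I|}\|f\|_{|I|+q}$, enabled by the (correct) observation that directional derivatives commute with $\nabla^N_k$, handles both cases uniformly and makes explicit what the paper leaves as ``clear.'' The paper's split buys a slightly sharper bound in the $m_k=0$ case (one fewer derivative of $f$ is consumed), but this is irrelevant for the proposition as stated; your route is cleaner. One minor point: your final step uses $\|f\|_{|I|} \le \|f\|_j$ for $|I|\le j$, which requires reading $\|f\|_j$ as the full $C^j$ norm rather than just the top seminorm defined in \eqref{eq:norm} --- but the paper makes the identical move (``$O(\|f\|_{j-1}) = O(\|f\|_j)$''), and the proposition is false otherwise, so this is the intended reading.
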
 

\begin{proof}
   The case $j = 1$ follows from Proposition \ref{derivbound}.  Now consider $\nabla_I^Nf(x)$ for a multi-set $I$ of $\{1,\dots,R\}$, with $|I| =j \ge 2$. If $\m_k > 0$ for all $k \in I$, the statement is clear. If on the other hand, $\m_k =0$ for some $k\in I$, then for this specific $k$, we have $c_k \le 0$ and
\begin{equation*}
   \| \nabla_I^N f\|_{\infty} \le 2N^{c_k} \|\nabla_{I\backslash k}^N f\|_{\infty} = O(\|f\|_{j-1}) = O(\|f\|_j),
   \end{equation*}
   where the second to last equality follows by an inductive hypothesis.
\end{proof}

We make some definitions associated with $\nabla^N. $ Let $g:\R^d \to \RR^R$. For $i,j \in \{1,\dots,R\}$
\begin{align}
 \label{eq:notations}
\begin{split}
[D^N g(x)]_{ij} &\eqdef \nabla_j^N g_i(x) \\
[(\nabla^N)^2 ]_{ij} &\eqdef  \nabla_i^N \nabla_j^N  \\ 
diag(N^c)  &\eqdef diag(N^{c_1}, ...,N^{c_R}).
\end{split} 
\end{align}
Also, we define $\mathbf{1}_R$ to be the $R$ dimensional vector whose entries are all $1$.
\begin{lemma} (Product Rule)\label{lemma:product}
Let $g, q: \RR^d \to \RR^R$  be  vector valued functions. Then
\begin{align*}
\begin{split}
\nabla^N_k ( g  \cdot  q)(x) &=   (\nabla^N_k g \cdot  q)(x) +    (g \cdot   \nabla^N_k q)(x)  + 
 N^{-c_k} (\nabla^N_k  g \cdot  \nabla^N_k q)(x).
 \end{split}
 \end{align*}
 Also,   
\begin{align*}
\begin{split}
\nabla^N ( g  \cdot q )(x) &= [D^N g]^T q (x)  + [ D^N q]^T g (x)+  
 diag(N^c)^{-1} ( [D^N  g]^T  \times  [D^N q]^T  )(x)    \mathbf{1}_R f.
\end{split}  
\end{align*}
\end{lemma}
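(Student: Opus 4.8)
The plan is to establish the first (scalar) identity directly from the definition \eqref{delta}, and then observe that the second identity is just its component-by-component restatement in matrix–vector notation.

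For the first identity, write, using \eqref{delta},
\[
\nabla^N_k(g\cdot q)(x) = N^{c_k}\bigl( g(x+\zeta_k^N)\cdot q(x+\zeta_k^N) - g(x)\cdot q(x)\bigr),
\]
and apply, entrywise in the $R$ coordinates of the dot product, the elementary algebraic identity $ab - a'b' = (a-a')b' + a'(b-b') + (a-a')(b-b')$ with $a = g(x+\zeta_k^N)$, $a' = g(x)$, $b = q(x+\zeta_k^N)$, $b' = q(x)$. Multiplying through by $N^{c_k}$, the first two resulting terms become exactly $(\nabla^N_k g\cdot q)(x)$ and $(g\cdot\nabla^N_k q)(x)$. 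For the cross term, insert a factor $N^{-c_k}N^{c_k}$ and regroup:
\[
N^{c_k}\bigl(g(x+\zeta_k^N)-g(x)\bigr)\cdot\bigl(q(x+\zeta_k^N)-q(x)\bigr) = N^{-c_k}\,(\nabla^N_k g\cdot\nabla^N_k q)(x),
\]
which is precisely the stated correction term. This proves the first formula.

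For the second formula, I would simply read off each of the three vectors component by component using the definitions in \eqref{eq:notations}. The $k$th component of $[D^N g]^T q$ equals $\sum_i [D^N g]_{ik}\, q_i = \sum_i (\nabla^N_k g_i)\, q_i = (\nabla^N_k g\cdot q)$, and likewise the $k$th component of $[D^N q]^T g$ equals $(g\cdot\nabla^N_k q)$. In the last term, the $(k,i)$ entry of the entrywise product $[D^N g]^T\times[D^N q]^T$ is $(\nabla^N_k g_i)(\nabla^N_k q_i)$; right-multiplying by $\mathbf{1}_R$ yields the vector whose $k$th component is $\sum_i (\nabla^N_k g_i)(\nabla^N_k q_i) = (\nabla^N_k g\cdot\nabla^N_k q)$, and applying $diag(N^c)^{-1}$ scales this by $N^{-c_k}$. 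Hence the $k$th component of the right-hand side of the second formula coincides with the right-hand side of the first, so the two statements are equivalent.

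There is no real obstacle here; the only care needed is in the bookkeeping of transposes and of the entrywise product ``$\times$'' so that each matrix component lines up with the scalar product rule. (I would also flag that the trailing ``$f$'' in the displayed statement appears to be a typographical slip and should be removed, since only $g$ and $q$ are involved.)
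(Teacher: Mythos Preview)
Your proof is correct and follows essentially the same approach as the paper: the paper also expands $\nabla^N_k(g\cdot q)$ from the definition and uses the same three-term algebraic decomposition, then says the second identity follows componentwise. In fact you give more detail than the paper does on unpacking the matrix notation in the second formula, and your observation about the stray ``$f$'' is correct.
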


\begin{proof}
Note that, for any $k$, 
\begin{align*}
\begin{split}
\nabla^N_k (g \cdot q)(x)  =&   N^{c_k}(g(x+\zeta_k^N)q(x + \zeta_k^N) - g(x)q(x))\\
&= N^{c_k}(g(x + \zeta_k^N) - g(x)) q(x) +  N^{c_k}(q(x + \zeta_k^N) - q(x)) g(x)    \\
& ~~~~~~~+ N^{-c_k}  N^{c_k}(q(x + \zeta_k^N) - q(x)) N^{c_k} (g(x + \zeta_k^N) - g(x)) \\
&= (\nabla^N_k g) \cdot q )(x)  + (\nabla^N_k q \cdot g )(x) + N^{-c_k}  (\nabla^N_k g \cdot \nabla^N_k q )(x),
 \end{split} 
 \end{align*}
 verifying the first statement.  To verify the second, one simply notes that the above calculation holds for every coordinate, and the result follows after simple bookkeeping. 
\end{proof}

\begin{cor}
Let $\lambda: \RR^d \to \RR^R$  be a vector valued function,  and $f : \RR^d\to \RR.$ Then
\begin{align*}
\nabla^N_k ( \lambda  \cdot  \nabla^N f )(x) &=  ( \nabla^N_k \lambda  \cdot  \nabla^N) f +   \lambda  \cdot    \nabla^N \nabla^N_k f  + 
 N^{-c_k} \nabla^N_k  \lambda  \cdot   \nabla^N\nabla^N_k f .
 \end{align*}
 Also,   
\begin{align}
\begin{split}
\nabla^N (\lambda \cdot \nabla^N f ) &= [D^N \lambda]^T \nabla^N f + [(\nabla^N)^2  f] \lambda +  
 diag(N^c)^{-1} ( [D^N \lambda  \times (\nabla^N)^2]   \mathbf{1}_R f.
\end{split} 
 \label{product3}
\end{align}
\end{cor}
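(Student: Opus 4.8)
The plan is to obtain both displays as immediate specializations of the Product Rule, Lemma~\ref{lemma:product}, with $g = \lambda$ and $q = \nabla^N f$ (both $\RR^R$-valued, so the lemma applies), after recording one elementary fact: the operators $\nabla^N_1,\dots,\nabla^N_R$ commute. Indeed, by \eqref{delta} each $\nabla^N_k$ is a linear combination of the identity and the translation $f \mapsto f(\cdot + \zeta^N_k)$, and translations commute, so $\nabla^N_j\nabla^N_k f = \nabla^N_k\nabla^N_j f$ for all $j,k$ and all $f$. Applying $\nabla^N_k$ coordinatewise to the vector $\nabla^N f$ and using this commutation gives $\nabla^N_k(\nabla^N f) = \nabla^N(\nabla^N_k f)$; the same fact also shows that the matrix $(\nabla^N)^2 f$ of \eqref{eq:notations} is symmetric. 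Moreover, taking $g = \nabla^N f$ in the definition $[D^N g]_{ij} = \nabla^N_j g_i$ gives $[D^N(\nabla^N f)]_{ij} = \nabla^N_j\nabla^N_i f$, hence $[D^N(\nabla^N f)]^T = (\nabla^N)^2 f$.

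For the first identity I would use the scalar form of Lemma~\ref{lemma:product}, which gives, for each $k$,
$$\nabla^N_k(\lambda\cdot\nabla^N f) = (\nabla^N_k\lambda\cdot\nabla^N f) + (\lambda\cdot\nabla^N_k\nabla^N f) + N^{-c_k}(\nabla^N_k\lambda\cdot\nabla^N_k\nabla^N f);$$
substituting $\nabla^N_k\nabla^N f = \nabla^N\nabla^N_k f$ from the previous paragraph and writing $(\nabla^N_k\lambda\cdot\nabla^N f) = (\nabla^N_k\lambda\cdot\nabla^N)f$ yields precisely the claimed expression. For the second identity I would apply the vector form of Lemma~\ref{lemma:product} with the same $g,q$, obtaining
$$\nabla^N(\lambda\cdot\nabla^N f) = [D^N\lambda]^T(\nabla^N f) + [D^N(\nabla^N f)]^T\lambda + diag(N^c)^{-1}\left([D^N\lambda]^T\times[D^N(\nabla^N f)]^T\right)\mathbf{1}_R,$$
and then replacing $[D^N(\nabla^N f)]^T$ by $(\nabla^N)^2 f$ from the previous paragraph; this is \eqref{product3}.

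There is essentially no obstacle here: the argument is pure bookkeeping on top of Lemma~\ref{lemma:product}. The only points requiring a moment of care are the commutativity of the $\nabla^N_k$, which is what legitimizes both the rewriting $\nabla^N_k\nabla^N f = \nabla^N\nabla^N_k f$ and the symmetry of $(\nabla^N)^2 f$, and keeping track of the transposes when passing from the coordinatewise product rule to its vector form; both are straightforward.
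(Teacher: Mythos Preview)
Your proposal is correct and follows essentially the same approach as the paper: substitute $g=\lambda$, $q=\nabla^N f$ into Lemma~\ref{lemma:product} and use the symmetry of $(\nabla^N)^2 f$. Your additional remarks on commutativity of the $\nabla^N_k$ and the identification $[D^N(\nabla^N f)]^T = (\nabla^N)^2 f$ merely spell out what the paper compresses into a single sentence.
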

\begin{proof}
Simply put $g = \lambda $ and  $q = \nabla^N f$,  and note that $\nabla^2$ is symmetric.
\end{proof}

\subsection{Euler's method} 
\label{sec:euler}

Throughout subsection \ref{sec:euler}, we let $Z_E^N$ be the Euler approximation to $X^N$, and let
\begin{equation*}
	P_{E,h} f(x) \eqdef \E_x f(Z_E^N(h)),
\end{equation*}
where $h$ is the step-size taken in the algorithm. Below, we will assume $h < N^{-\gamma}$, which is a natural stability condition, and is discussed further in Section \ref{sec:stability}.
\begin{thm} \label{thm:local_Euler}
Suppose that the step size $h$ satisfies 
$h <  N^{- \gamma}$. Then
\begin{equation*}
\| P_{E,h}  - \cP_h\|_{2 \to 0}^{\nabla^N} =  O(N^{2\gamma}  h^2).
\end{equation*}
\end{thm}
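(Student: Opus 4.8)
The plan is to estimate the one-step local error by comparing the two semigroups through Dynkin's formula applied to each process over the interval $[0,h]$. For the exact process $X^N$, Dynkin's formula gives $\cP_h f(x) = f(x) + \int_0^h \cP_s(\cA^N f)(x)\, ds$, and iterating once more yields $\cP_h f(x) = f(x) + h(\cA^N f)(x) + \int_0^h\int_0^s \cP_u((\cA^N)^2 f)(x)\, du\, ds$. For the Euler process, the key structural fact (equation \eqref{eq:tau_martingale} in scaled form) is that the generator is frozen at the initial point over the step, so $P_{E,h}f(x) = f(x) + \E_x\int_0^h (\TG^N_x f)(Z_E^N(s))\, ds$; since $\TG^N_x f(x) = (\cA^N f)(x)$, a second application of Dynkin to $Z_E^N$ (whose generator on $[0,h]$ is the constant-coefficient operator $\TG^N_x$) gives $P_{E,h}f(x) = f(x) + h(\cA^N f)(x) + \int_0^h\int_0^s \E_x[(\TG^N_x)^2 f(Z_E^N(u))]\, du\, ds$. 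Subtracting, the $O(1)$ and $O(h)$ terms cancel exactly and we are left with
\begin{equation*}
(P_{E,h} - \cP_h)f(x) = \int_0^h\int_0^s \Big( \E_x[(\TG^N_x)^2 f(Z_E^N(u))] - \cP_u((\cA^N)^2 f)(x)\Big)\, du\, ds.
\end{equation*}

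The next step is to bound the two second-order terms in the $\|\cdot\|_0^{\nabla^N}$ norm, uniformly in the point at which they are evaluated. Writing $\cA^N f = N^\gamma \lambda \cdot \nabla^N f$ and $\TG^N_x f = N^\gamma \lambda(x)\cdot \nabla^N f$, applying the operator twice produces a factor $N^{2\gamma}$ times expressions built from $\lambda$, $\nabla^N\lambda$, $\nabla^N$ and $(\nabla^N)^2$ acting on $f$ — precisely the kind of quantities controlled by Lemma \ref{lemma:product}, its corollary \eqref{product3}, Proposition \ref{derivbound} and Proposition \ref{prop:deriv}. Under the Running Assumption $\lambda$ and all its derivatives are uniformly bounded, and Proposition \ref{prop:deriv} gives $\|\nabla^N$-derivatives of $f$ up to order two$\|_\infty = O(\|f\|_2)$, so both $(\TG^N_x)^2 f$ and $(\cA^N)^2 f$ are $O(N^{2\gamma}\|f\|_2)$ in sup norm. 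One must also observe that $\nabla^N$-derivatives of these second-order expressions (needed because the target norm is $\|\cdot\|_0^{\nabla^N}$, not $\|\cdot\|_\infty$, but here the target order is $0$, so actually $\|\cdot\|_0^{\nabla^N} = \|\cdot\|_\infty$ by \eqref{eq:0norm}) — so in fact only the sup-norm bound is needed. Since $\cP_u$ is a contraction on sup norm and the inner double integral has measure $O(h^2)$, we conclude $\|(P_{E,h}-\cP_h)f\|_\infty = O(N^{2\gamma} h^2 \|f\|_2)$, which combined with Proposition \ref{prop:deriv} ($\|f\|_2 = O(\|f\|_2^{\nabla^N})$, and in the $j\to\ell$ notation the denominator is $\|f\|_2^{\nabla^N}$) gives exactly $\|P_{E,h}-\cP_h\|_{2\to 0}^{\nabla^N} = O(N^{2\gamma}h^2)$.

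The one genuine subtlety — and the step I expect to require the most care — is justifying that the second Dynkin expansion of $Z_E^N$ is valid and that the error term is genuinely $O(N^{2\gamma}h^2)$ rather than carrying hidden $N$-dependence through the process $Z_E^N$ itself. The Euler process can in principle take large jumps (the Poisson increments are unbounded), so one needs that $\E_x[(\TG^N_x)^2 f(Z_E^N(u))]$ is controlled \emph{uniformly in the state} $Z_E^N(u)$ by $\|(\TG^N_x)^2 f\|_\infty$ — which holds precisely because $(\TG^N_x)^2 f$, as an expression in $\lambda(x)$ (a frozen constant) and bounded $\nabla^N$-derivatives of $f$, is a bounded function, so the expectation is harmless regardless of the distribution of $Z_E^N(u)$. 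The role of the hypothesis $h < N^{-\gamma}$ is a stability/bookkeeping matter: it ensures that the natural time variable $N^\gamma s$ ranges over $[0,1]$, so that iterating Dynkin does not generate additional powers of $N^\gamma h$ that would spoil the clean $N^{2\gamma}h^2$ scaling; one should check that each application of $\TG^N_x$ contributes exactly one factor $N^\gamma$ and no factor $h$ beyond those already accounted for by the integration. Finally I would remark that the constant in the $O(\cdot)$ depends only on $\|f\|_2$, the uniform bounds on $\lambda$ and its derivatives, and $R$, but not on $N$ — this is what makes Euler's method a genuine order-one method after Theorem \ref{thm:LG} is applied.
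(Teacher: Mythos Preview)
Your argument is correct and follows the same overall strategy as the paper --- iterate Dynkin twice for both processes, match the zeroth- and first-order terms, and bound the second-order contribution by $O(N^{2\gamma})$ times a second-order norm of $f$ --- with one technical difference: you stop at second order with an integral-form remainder and bound each of $\cP_u((\cA^N)^2 f)$ and $\E_x[(\TG^N_x)^2 f(Z_E^N(u))]$ separately in sup norm, whereas the paper expands once more to get $\frac{h^2}{2}\big[(\TG^N_{x_0})^2 - (\cA^N)^2\big]f(x_0) + O(N^{3\gamma}\|f\|_3^{\nabla^N}h^3)$ and then uses the hypothesis $h<N^{-\gamma}$ to absorb the cubic remainder into the quadratic term. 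Your route is in fact slightly more economical: it requires neither the hypothesis $h<N^{-\gamma}$ nor any third $\nabla^N$-derivative of $f$ (the paper's remainder involves $\|f\|_3^{\nabla^N}$, a mild mismatch with the stated $2\to 0$ norm).

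One small slip to fix: you invoke Proposition~\ref{prop:deriv} for ``$\|f\|_2 = O(\|f\|_2^{\nabla^N})$'', but that proposition gives only the opposite inequality $\|f\|_2^{\nabla^N} = O(\|f\|_2)$, so as written your final step does not close. The fix is simply to avoid the continuous norm altogether: since $(\TG^N_x)^2 f(y) = N^{2\gamma}\sum_{j,k}\lambda_j(x)\lambda_k(x)\,\nabla^N_j\nabla^N_k f(y)$, and $(\cA^N)^2 f$ expands via \eqref{product3} into terms of the same shape (with bounded coefficients by the Running Assumption), both quantities are bounded by $O\big(N^{2\gamma}\|f\|_2^{\nabla^N}\big)$ directly from Definition~\ref{def:op_norm}. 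Using this in place of $\|f\|_2$ throughout gives the $2\to 0$ bound immediately.
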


\begin{proof}
For Euler's method with initial condition $x_0$, 
\begin{equation}
  P_{E,h} f(x_0) = f(x_0) + h \TG_{x_0}^N f(x_0) + \frac{h^2}{2} (\TG^N_{x_0})^2 f(x_0)  + O (N^{3 \gamma} \| f\|^{\nabla^N}_3 h^3),
  \label{eq:RE5}
\end{equation}
where, noting $\nabla^N  \lambda(x_0) = 0$ and using the product rule in Lemma \ref{lemma:product}, we have
\begin{align}
\TG_{x_0}^N f  &=  N^{\gamma} \lambda (x_0)  \cdot \nabla^N f \notag  \\
(\TG_{x_0}^N)^2 f &= N^{\gamma} \lambda (x_0)  \cdot \nabla^N  ( N^{\gamma} \lambda (x_0)  \cdot \nabla^N f) \notag\\
&= N^{2\gamma} \lambda (x_0)^T  [(\nabla^N)^2 f ]  \lambda (x_0). \label{eq:RE5.5}
\end{align}
On the other hand, for  the exact process \eqref{eq:scaled},  
\begin{align}
\cP_h f(x_0) &= f(x_0) + h \EG^N f(x_0) + \frac{h^2}{2} (\EG^N)^2 f(x_0)  + O (N^{3 \gamma} \| f\|^{\nabla^N}_3 h^3),
\label{eq:RE6}
\end{align}
where, again,  
\begin{equation*}
	\EG^N f = N^{\gamma} \lambda\cdot \nabla^N f.
\end{equation*}
Noting that, 
\begin{align}
\begin{split}
(\EG^N)^2 f (x) &= N^{2\gamma} (\lambda  \cdot \nabla^N ( \lambda \cdot \nabla^N f(x)) )  \\
&=  N^{2\gamma}  \lambda^T ( [D^N \lambda] ^T \nabla ^N f(x) + [(\nabla^N)^2 f] \lambda(x) + 
N^{2\gamma} \lambda^T ( diag(N^{-c}) [D^N \lambda \times (\nabla)^2 ] 1_{R}f )  
%&=  N^{2\gamma}  \lambda^T [D^N \lambda] ^T \nabla ^N f(x) + 
 %N^{2\gamma}   \lambda^T [(\nabla^N)^2 f] \lambda (x) + 
 %O( N^{2\gamma-\min\{\m_k\}}  \| f\|^{\nabla^N}_2),
 \label{Exact}
\end{split}
\end{align}

\noindent and defining 
\begin{align*}%\label{eq:abc}
\begin{split}
a(x) &\eqdef N^{2\gamma}  \lambda^T [D^N \lambda] ^T \nabla ^N f(x)\\
b(x) &\eqdef N^{2\gamma}   \lambda^T [(\nabla^N)^2 f] \lambda (x) \\ 
c(x) &\eqdef N^{2\gamma} \lambda^T [ diag(N^{-c}) [D^N \lambda \times (\nabla^N)^2 ] 1_{R}f(x) ],
\end{split}
\end{align*}

\noindent we can write 
\begin{align*}
\begin{split}
\cP_h f(x_0) &=f(x_0) + h \EG^N f(x_0) +   \frac{h^2}{2} (a(x_0) + b(x_0) + c(x_0))  + O (N^{3 \gamma}  \| f\|^{\nabla^N}_3  h^3).
\end{split}
\end{align*}
Note that  $\TG^N_{x_0} f(x_0) = \EG^N f(x_0)$ and $b(x_0) = (\TG_{x_0}^N)^2f(x_0)$. We may then compare \eqref{eq:RE5} and \eqref{eq:RE6}
\begin{align*}
\begin{split}
(P_{E,h} - \cP_h) f(x_0) &= \frac{h^2}{2} ((\TG_{x_0}^N)^2f(x_0)- (a(x_0) + b(x_0) + c(x_0)))  + O(N^{3 \gamma}\| f\|^{\nabla^N}_3 h^3 ) \\
&= \frac{h^2}{2}(-a(x_0)- c(x_0)) +  O(N^{3 \gamma}\| f\|^{\nabla^N}_3 h^3 ).
\end{split}
\end{align*}
The term $a(x) + c(x) = O(N^{2\gamma} \|f \|_2^{\nabla^N}  )$ is clearly non-zero in general, giving the desired result.
\end{proof}

\subsection{Approximate midpoint method}
\label{sec:mdpt}
Throughout subsection \ref{sec:mdpt}, we let $Z^N_M$ be the midpoint method approximation to $X^N$, and let
\begin{equation*}
	P_{M,h} f(x) \eqdef \E_x f(Z_M^N(h)),
\end{equation*}
where $h$ is the step-size taken in the algorithm.  As before, we will assume $h < N^{-\gamma}$, which is a natural stability condition, and is discussed further in Section \ref{sec:stability}.

\begin{thm}\label{thm:mdpt_local}
Suppose that the step size $h$ satisfies 
$h <  N^{- \gamma}$. Then
\begin{equation*}
\| (P_{M,h}  - \cP_{h}) \|_{3 \to 0}^{\nabla^N} =  O( N^{3\gamma} h^3 +  N^{2\gamma - \min \{ \m_k \}} h^2   ).
\end{equation*}
\end{thm}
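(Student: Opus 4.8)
The plan is to mimic the proof of Theorem \ref{thm:local_Euler}, Taylor-expanding both semigroups to third order in $h$ and comparing term-by-term, but now tracking carefully the extra error introduced by the fact that the midpoint method evaluates intensities at the \emph{approximate} midpoint $\rho(x_0)$ rather than at the true midpoint of the trajectory. First I would write the one-step expansion for the midpoint scheme. Since on a step the process $Z_M^N$ runs with frozen generator $\TG^N_{\rho(x_0)}$, Dynkin's formula iterated three times gives
\begin{equation*}
   P_{M,h} f(x_0) = f(x_0) + h\,\TG^N_{\rho(x_0)} f(x_0) + \frac{h^2}{2}(\TG^N_{\rho(x_0)})^2 f(x_0) + O(N^{3\gamma}\|f\|_3^{\nabla^N} h^3),
\end{equation*}
where I must check that the remainder is genuinely $O(N^{3\gamma}\|f\|_3^{\nabla^N}h^3)$ uniformly, using Proposition \ref{prop:deriv} together with $h<N^{-\gamma}$ so that each application of $\TG^N$ costs one factor of $N^\gamma$. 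Then I would expand $\rho(x_0) = x_0 + \tfrac12 h N^\gamma\sum_k N^{c_k}\lambda_k(x_0)\zeta_k^N$ inside the intensities: writing $\rho(x_0)-x_0 = \tfrac12 h\, v$ with $v \eqdef N^\gamma\sum_k N^{c_k}\lambda_k(x_0)\zeta_k^N$, a Taylor expansion of $\lambda_k$ gives $\lambda_k(\rho(x_0)) = \lambda_k(x_0) + \tfrac12 h\,\lambda_k'(x_0)[v] + O(h^2|v|^2\|\lambda\|_2)$. The crucial scaling bookkeeping is that $|v| = O(N^\gamma)$ — because $|N^{c_k}\zeta_k^N| = O(1)$ by \eqref{eq:cond1} and $\lambda_k$ is bounded — but the directional derivative $\lambda_k'(x_0)[v]$ telescopes into $\nabla^N$-type quantities: $N^\gamma\sum_k N^{c_k}\lambda_k'(x)[v]$ evaluated against $f$ should be reorganized so that one factor of $|\zeta_k^N| = N^{-\m_k}$ survives, producing the characteristic $N^{-\min_k \m_k}$ gain.

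Next I would expand the exact semigroup $\cP_h$ exactly as in \eqref{eq:RE6}, using $\EG^N f = N^\gamma\lambda\cdot\nabla^N f$ and the formula \eqref{Exact} for $(\EG^N)^2 f$ with its three pieces $a,b,c$ from the proof of Theorem \ref{thm:local_Euler}. Subtracting, the $O(h)$ terms must cancel up to the midpoint correction: $h\TG^N_{\rho(x_0)}f(x_0) - h\EG^N f(x_0) = h\cdot N^\gamma(\lambda(\rho(x_0)) - \lambda(x_0))\cdot\nabla^N f(x_0)$, and plugging in the Taylor expansion of $\lambda(\rho(x_0))$ this becomes $\tfrac12 h^2\, N^\gamma \sum_k \lambda_k'(x_0)[v]\,\nabla_k^N f(x_0) + O(\ldots h^3)$. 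The design of the midpoint method is precisely that this $\tfrac12 h^2$ correction cancels the term $a(x_0) = N^{2\gamma}\lambda^T[D^N\lambda]^T\nabla^N f(x_0)$ appearing in $(\EG^N)^2 f$ — this is the analogue of the deterministic midpoint rule matching the derivative term. I would verify this cancellation carefully: the point is that $a(x_0)$ involves $\nabla_j^N\lambda_i = N^{c_j}(\lambda_i(x_0+\zeta_j^N)-\lambda_i(x_0)) = N^{c_j}\lambda_i'(x_0)[\zeta_j^N] + O(N^{c_j}|\zeta_j^N|^2\|\lambda\|_2) = \lambda_i'(x_0)[N^{c_j}\zeta_j^N] + O(N^{c_j-\m_j}\|\lambda\|_2)$, so $a(x_0)$ and the midpoint correction agree up to an error of size $O(N^{2\gamma}\max_j N^{c_j - \m_j}\|f\|_2^{\nabla^N}) \le O(N^{2\gamma - \min_j\m_j}\|f\|_2^{\nabla^N})$, using $c_j \le \m_j$ and $c_j \le \gamma$-type bounds. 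Multiplying by $\tfrac12 h^2$ yields exactly the stated $N^{2\gamma-\min\{\m_k\}}h^2$ term.

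After the cancellation of $a(x_0)$, what remains at order $h^2$ are the leftover pieces: the $O(N^{2\gamma-\min\m_k}h^2)$ discrepancy just described, the term $c(x_0)$ which is itself $O(N^{2\gamma - \min_k\m_k}\|f\|_2^{\nabla^N})$ (since it carries an explicit $\mathrm{diag}(N^{-c})$ factor against second-differences, or equivalently a surviving $N^{-\m_k}$), so also feeds into the $N^{2\gamma-\min\m_k}h^2$ bucket, and the difference $(\TG^N_{\rho(x_0)})^2 f(x_0) - b(x_0)$; since $\rho(x_0) - x_0 = O(hN^\gamma) = O(1)\cdot h$ is itself $O(h)$ small, replacing $\rho(x_0)$ by $x_0$ inside the second-order operator costs only $O(N^{3\gamma}\|f\|_3^{\nabla^N}h^3)$, leaving $(\TG^N_{x_0})^2f(x_0) = b(x_0)$ which cancels. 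All genuine $h^3$ remainders are $O(N^{3\gamma}\|f\|_3^{\nabla^N}h^3)$. Collecting, $\|(P_{M,h}-\cP_h)f\|_0 = O(N^{3\gamma}h^3\|f\|_3^{\nabla^N} + N^{2\gamma-\min\m_k}h^2\|f\|_2^{\nabla^N})$, and dividing by $\|f\|_3^{\nabla^N}$ (using $\|f\|_2^{\nabla^N}\le\|f\|_3^{\nabla^N}$) gives the claimed operator-norm bound. The main obstacle I anticipate is the bookkeeping in the second paragraph: making rigorous the claim that the midpoint correction cancels $a(x_0)$ up to $O(N^{2\gamma-\min\m_k})$ requires carefully rewriting $\nabla_j^N\lambda$ as a directional derivative plus a controlled remainder and matching indices, and one must be scrupulous that no hidden $N^{\gamma}$ (as opposed to $N^{c_k}$) sneaks in — in particular that the remainder is $N^{2\gamma}N^{-\min\m_k}$ and not, say, $N^{3\gamma}N^{-\min\m_k}$, which relies on the bound $|v| = O(N^\gamma)$ rather than the cruder $O(N^\gamma\sum_k N^{c_k})$.
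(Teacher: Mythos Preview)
Your plan is essentially the same as the paper's proof: both expand $P_{M,h}f(x_0)$ and $\cP_h f(x_0)$ to third order via Dynkin, decompose $(\EG^N)^2 f = a + b + c$ using the product rule, observe that $(\TG^N_{\rho(x_0)})^2 f(x_0)$ matches $b(x_0)$ up to an $O(h)$ perturbation, bound $c(x_0) = O(N^{2\gamma-\min\m_k}\|f\|_2^{\nabla^N})$ directly, and then show that the midpoint correction term cancels $\tfrac{h^2}{2}a(x_0)$ up to the discrepancy between the discrete difference $[D^N\lambda]$ and the continuous derivative $[D\lambda][\zeta^N]\mathrm{diag}(N^c)$.

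There is one arithmetic slip worth flagging in your bookkeeping of that last cancellation. You write
\[
\nabla_j^N\lambda_i = N^{c_j}\lambda_i'(x_0)[\zeta_j^N] + O\big(N^{c_j}|\zeta_j^N|^2\|\lambda\|_2\big) = \lambda_i'(x_0)[N^{c_j}\zeta_j^N] + O\big(N^{c_j-\m_j}\|\lambda\|_2\big),
\]
but $N^{c_j}|\zeta_j^N|^2 = N^{c_j - 2\m_j}$, not $N^{c_j-\m_j}$. This matters: from $N^{c_j-\m_j}$ and $c_j\le\m_j$ you only get $O(1)$, which would leave the $a$-cancellation at $O(N^{2\gamma}h^2)$ --- no better than Euler. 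With the correct exponent $c_j-2\m_j$, the inequality $c_j\le\m_j$ gives $c_j-2\m_j\le -\m_j\le -\min_k\m_k$, which is exactly the $O(N^{-\min\m_k})$ gain you need. The reference to ``$c_j\le\gamma$-type bounds'' is not needed (and not generally true); only $c_j\le\m_j$ is used. With that correction your argument goes through and coincides with the paper's.
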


\begin{remark}
  Theorem \ref{thm:mdpt_local} predicts that the midpoint method behaves locally like a third order method and globally like a second order method
 if $h$ is in a regime satisfying $N^{\gamma} h \gg N^{-\min\{\m_k\}}$, or equivalently if $h \gg N^{-\gamma - \min\{\m_k\}}$.  
This agrees with the result found in \cite{AndGangKurtz2011} pertaining to the midpoint method, which had $\gamma = 0$, $\m_k \equiv 1$, 
and the running assumption that $h \gg 1/N$. This behavior  is demonstrated via numerical example in Section \ref{sec:examples}. 
  \end{remark}
  
\begin{proof} (of Theorem \ref{thm:mdpt_local})
 Let $\zeta^N$ denote the matrix with $k$th column $\zeta_k^N$, i.e.
\begin{align*}
[\zeta^N] &= [\zeta_1^N, \zeta_2^N, ...., \zeta_R^N].
\end{align*}
Recall that $\rho$ is defined via 
\begin{equation*}
\rho(z) =z + \frac{h}{2} N^{\gamma} \sum_k \lambda_k(z) N^{c_k} \zeta^N_k.
\end{equation*}
After some algebra, we have  
\begin{align*}
\TG^N_{\rho(x_0)} f(x) &= N^{\gamma} (\lambda (x_0 + \frac{h}{2} N^{\gamma} \sum_k \lambda_k(x_0) N^{c_k} \zeta^N_k) ) \cdot \nabla^N f(x) \\
%&= N^{\gamma} (\lambda (x_0 + \frac{h}{2} N^{\gamma} [\zeta^N] diag(N^c) \lambda(x_0)     ) \cdot \nabla^N f(x) \\
%&= N^{\gamma}  (\lambda (x_0) +     \frac{h}{2} N^{\gamma} [D\lambda(x_0) ]  [\zeta^N] diag(N^c) \lambda(x_0) ) \cdot \nabla^N f(x)  + O(N^{2\gamma    } \|f \|_1^{\nabla^N} h^2)\\
%&=  N^{\gamma} \lambda (x_0)  \cdot \nabla^N f(x) + N^{2\gamma} \frac{h}{2}   [D\lambda(x_0) ]  [\zeta^N] diag(N^c) \lambda(x_0)  \cdot \nabla^N f(x) + O(N^{2\gamma} \|f \|_1^{\nabla^N}  h^2  ) \\
&= N^{\gamma} \lambda (x_0)  \cdot \nabla^N f(x) + w(x_0) + O(N^{2\gamma} \|f \|_1^{\nabla^N}  h^2  ).
\end{align*}
\noindent where
$$w(x) \eqdef N^{2\gamma} \frac{h}{2}   [D\lambda(x_0) ]  [\zeta^N] diag(N^c) \lambda(x_0)  \cdot \nabla^N f(x). $$

\noindent Next, using the product rule \eqref{product3}, we see
\begin{align*}
    (\TG^N_{\rho(x_0)})^2 f(x)  &= N^{\gamma} \lambda (x_0 + \frac{h}{2} [\zeta^N] diag(N^c) \lambda(x_0)  ) \cdot 
\nabla^N (N^{\gamma}  \lambda (x_0 + \frac{h}{2} [\zeta^N] diag(N^c) \lambda(x_0)  )  \cdot \nabla^N f)(x)  \\
&=N^{2\gamma} \lambda (x_0 + \frac{h}{2} [\zeta^N] diag(N^c) \lambda(x_0)  )^T [(\nabla^N)^2 f] 
\lambda (x_0 + \frac{h}{2} [\zeta^N] diag(N^c) \lambda(x_0)  )  \cdot \nabla^N f)(x)  \\
%&= N^{2 \gamma} \lambda(x_0)^T [(\nabla^N)^2  f (x)] \lambda(x_0) + O(N^{2\gamma} \|f \|^{\nabla^N}_2 h )\\
&= g(x_0) + O(N^{2\gamma} \|f \|^{\nabla^N}_2 h ),
\end{align*}
\noindent where
\begin{equation*}
  g(x_0) \eqdef  N^{2 \gamma} \lambda(x_0)^T [(\nabla^N)^2  f (x)] \lambda(x_0). 
\end{equation*}
Therefore, since $N^{\gamma} \lambda(x_0)\cdot \nabla^Nf(x_0) = \cA^N f(x_0)$, it follows that
\begin{align*}
\begin{split}
     P_{M,h} f(x_0) = f(x_0) &+ h \TG_{\rho(x_0)}^N f(x_0) + \frac{h^2}{2} (\TG^N_{\rho(x_0)})^2 f(x_0)  + O (N^{3 \gamma} \|f \|_3^{\nabla^N}h^3)\\
                       = f(x_0) &+ h\left ( \EG^N f(x_0)  +  w(x_0) +      O (N^{2 \gamma} \|f \|_2^{\nabla^N} h^2 )\right) \\
                       &+ \frac{h^2}{2} \left ( g(x_0) + O (N^{2 \gamma} \|f \|_2^{\nabla^N} h) \right  )   + O (N^{3 \gamma} \|f \|_3^{\nabla^N}h^3). 
\end{split} 
\end{align*}

\noindent Recall that
\begin{align*}
\begin{split}
(\EG^N)^2 f (x) &=  a(x) + b(x) + c(x),  
\end{split}
\end{align*}
where %$a,b,c$ are defined in \eqref{eq:abc},
\begin{align}
a(x) &= N^{2\gamma}  \lambda^T [D^N \lambda] ^T \nabla ^N f(x),\notag \\
b(x) &= N^{2\gamma}  \lambda^T [(\nabla^N)^2 f] \lambda (x), \notag \\
c(x) &= N^{2\gamma}  \lambda^T [ diag(N^{-c}) [D^N \lambda \times (\nabla^N)^2 ] 1_{R}f(x)],\label{eq:cx0}
\end{align}
and
\begin{align*}
\cP_h f(x_0) &= f(x_0) + h\EG^N f(x_0) +   \frac{h^2}{2} (a(x_0) + b(x_0) + c(x_0)) + O (N^{3 \gamma} \|f \|_3^{\nabla^N}h^3).
\end{align*}

\noindent Noting that $b(x_0) = g(x_0)$, we see
\begin{align}
 \label{eq:pre_result}  
\begin{split}
(P_{M,h} - \cP_h) f(x_0) &=  hw(x_0) + \frac{h^2}{2} \left(g(x_0) - (a(x_0)+b(x_0)+c(x_0) )\right) + O (N^{3 \gamma} \|f \|_3^{\nabla^N}h^3)  \\
&= (hw(x_0) - \frac{h^2}{2}a(x_0)) - \frac{h^2}{2} c(x_0) + O (N^{3 \gamma} \|f \|_3^{\nabla^N}h^3) .
\end{split}
\end{align}
We will now gain control over the terms $(hw(x_0) - \frac{h^2}{2}a(x_0))$ and $\frac{h^2}{2} c(x_0)$, separately. 

Handling $\frac{h^2}2 c(x_0)$ first,  we have that $\nabla^N \lambda_k \in O(N^{c_k - \m_k}),$ and so
\begin{equation*}
c(x_0) = O(N^{2\gamma - \min \{\m_k\}} \|f\|_2^{\nabla^N}  ).   
\end{equation*}

\noindent Next, we will show that
\begin{equation*}
     hw(x_0) - \frac{h^2}{2} a(x_0) =  O(N^{2 \gamma  -\min\{\m_k \} } \| f\|_1^{\nabla^N}  h^2 ).
\end{equation*}

\noindent We have 
\begin{align}
\begin{split}
hw(x_0) - \frac{h^2}{2} a(x_0) &= 
\frac{h^2}{2}N^{2\gamma}  [D\lambda(x_0) ]  [\zeta^N] diag(N^c) \lambda(x_0)  \cdot \nabla^N f(x_0) - \frac{h^2}{2} N^{2\gamma}  \lambda^T [D^N \lambda] ^T \nabla ^N f(x) \\
&= \frac{h^2}{2} N^{2\gamma} \bigg(  [D\lambda(x_0) ]  [\zeta^N] diag(N^c) - [D^N \lambda(x_0)] \bigg) \lambda(x_0) \cdot  \nabla^N f(x_0).
  \label{deriv} 
\end{split}
\end{align}
By Proposition \ref{prop:deriv}, $\nabla^N f(x)$ is bounded by $\|f \|^{\nabla^N}_1$.  Therefore, we just need to show that the difference between the two square matrices
\begin{align}
 [D^N \lambda(x_0) ]   \qquad \text{and} \qquad  [D\lambda(x_0) ]  [\zeta^N] diag(N^c)
 \label{eq:side}
 \end{align}
 is $O(N^{-\min\{\m_k \} })$. 
Recalling the definitions in \eqref{eq:notations}, the $(i, j)$th entry of the left side of \eqref{eq:side} is 
$$N^{c_j} ( \lambda_i (x_0 + \zeta_j^N )  - \lambda_i(x_0)) $$
whereas that of the right side of \eqref{eq:side} is  
$$N^{c_j}\nabla \lambda_i  \cdot \zeta_j^N.$$
Also, note that, for $ \lambda \in C^2_c(\RR^d ,  \RR)$, 
$$( (\lambda (x + v) - \lambda (x))  - \nabla \lambda(x)  \cdot v) \in O(|v|^2 \|\lambda \|_2 ).$$

\noindent where $$\|\lambda \|_2 = \sup\{\|\lambda\|_\infty, \|\partial_{x_i} \lambda\|_\infty, \|\partial_{x_j} \partial_{x_\ell} \lambda\|_\infty, i,j,k \leq d.\}$$ 
\noindent Since $\|\lambda_k \|_2$ is bounded for any $k$,  the difference between the $(i,j)$th entries of the two expressions in \eqref{eq:side} is 
\begin{equation*}
  O(N^{c_j} N^{ -  2\m_j}). 
\end{equation*}
\noindent Also, recall that $c_j - \m_j  \leq 0$. Thus the above is also
 $$O(N^{ - \min\{ \m_k\}}).$$
\noindent Therefore \eqref{deriv} is of order 
 $$O(N^{2 \gamma  -\min\{\m_k \} } h^2 \|f \|^{\nabla^N}_1),$$
\noindent as desired.
  Combining the above with \eqref{eq:pre_result} gives us
 \begin{align}
 \begin{split}
 \|(\cP_h  - P_{M,h})f\|_0 &= O(N^{2 \gamma  - \min \{ \m_k\} } \| f\|^{\nabla^N}_1 h^2 + N^{2 \gamma - \min \{ \m_k \} } \|f \|_2^{\nabla^N}  h^2 + N^{3\gamma} \| f\|^{\nabla^N}_3 h^3  ) \\
%& = O ( \| f\|^{\nabla^N}_3 [ N^{3\gamma}h^3 + N^{2 \gamma} h^2( N^{ - \min \{ \m_k \} } + N^{ - \min \{ \m_k \} } )]  ) \\
&= O ( \| f\|^{\nabla^N}_3 [ N^{3\gamma}h^3 + N^{2 \gamma - \min \{ \m_k \}} h^2 ]  ), 
\end{split}
 \end{align} 
 implying 
 $$\| P_{M,h} - \cP_h  \|_{3\to 0}^{\nabla^N} = O(N^{3\gamma}h^3 + N^{2 \gamma - \min \{ \m_k \}} h^2),   $$
 as desired.
 \end{proof}

\subsection{Weak trapezoidal method}
\label{sec:trap}

Throughout subsection \ref{sec:trap}, we let $Z_\T^N$ be the weak trapezoidal approximation to $X^N$, and  let
\begin{equation*}
	P_{\T,h} f(x) \eqdef \E_x f(Z_\T^N(h)),
\end{equation*}
where $h$ is the size of the time discretization.  We will again only consider the case $h<N^{-\gamma}$, which is a natural stability condition and is discussed further in Section \ref{sec:stability}.

We make the standing assumption that for all  $x$ in our state space of interest, and $  k, j \in \{1,\dots, R\},$ we have 
\begin{align}
  	\xi_1 \lambda_k(x + \zeta_j^N) - \xi_2\lambda_k(x) &\ge 0,
	\label{eq:stand_assump}
\end{align}
where $\xi_1>\xi_2$ are defined in \eqref{eq:xi} for some $\theta \in (0,1)$.  Noting that $\zeta_j^N$ will often be small, and that $\xi_1 - \xi_2=1$, for most processes, including those arising from biochemistry, the requirement \eqref{eq:stand_assump} holds so long as the process is not directly at the boundary of the positive orthant.  
Weakening (5.12) is almost certainly doable, for example by gaining
control over the probability that a process leaves a region in which
the condition holds.  This is an avenue for  future work.

\begin{thm} \label{thm:local_trap}
 Suppose that the step size $h$ satisfies 
$h <  N^{- \gamma}$. Then 
$$  \| (P_{\T,h}  - \cP_{h}) \|_{3 \to 0}^{\nabla^N} = O( N^{3\gamma} h^3).$$
\end{thm}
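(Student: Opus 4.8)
The plan is to follow the same Taylor-expansion-and-compare strategy used for Euler's method in Theorem~\ref{thm:local_Euler} and for the midpoint method in Theorem~\ref{thm:mdpt_local}, but now tracking terms up to order $h^3$, since the claim is that the weak trapezoidal method kills both the $O(h^2)$ \emph{and} the residual $O(h^2)$-with-bad-$N$-scaling pieces that survive in those two methods. First I would expand $\cP_h f(x_0)$ to third order using Dynkin's formula \eqref{eq:Dynkin} iterated three times, obtaining
\begin{equation*}
	\cP_h f(x_0) = f(x_0) + h\,\EG^N f(x_0) + \frac{h^2}{2}(\EG^N)^2 f(x_0) + \frac{h^3}{6}(\EG^N)^3 f(x_0) + O(N^{4\gamma}\|f\|_4^{\nabla^N} h^4),
\end{equation*}
and then do the analogous expansion for the weak trapezoidal semigroup $P_{\T,h}$ by composing the two sub-steps of Algorithm~\ref{alg:weaktrap}: the first sub-step is an Euler step of length $\theta h$ using generator $\TG^N_{z}$ at the current point, producing $y^*$; the second sub-step runs the generator $\TG^N_{y^*,z}$ (with the coefficients $[\xi_1\lambda(y^*)-\xi_2\lambda(z)]^+$, which by the standing assumption \eqref{eq:stand_assump} can have the $[\cdot]^+$ dropped) for a time $(1-\theta)h$. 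I would expand $y^* = z + \theta h\,N^\gamma[\zeta^N]\,diag(N^c)\lambda(z) + O(N^{2\gamma}h^2)$ and feed that into the second-step generator, keeping all terms through $h^3$.

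The heart of the argument is the algebraic miracle: the weights $\xi_1,\xi_2$ from \eqref{eq:xi} are precisely chosen so that, after composing the sub-steps, the coefficients of $h^2$ match those of $\cP_h$ \emph{including} the problematic $a(x_0)$ and $c(x_0)$ terms (the ones defined right before \eqref{eq:cx0}) that the plain midpoint method in \eqref{eq:pre_result} failed to cancel. I would carefully collect the $h^2$-coefficient of $P_{\T,h}f(x_0)$ — it picks up contributions from (i) the $\frac12(\TG^N_z)^2$ type term of the first Euler sub-step weighted by $\theta^2$, (ii) the cross term between the two sub-steps weighted by $\theta(1-\theta)$, and (iii) the $\frac12(\TG^N_{y^*,z})^2$ term of the second sub-step weighted by $(1-\theta)^2$, plus the first-order-in-$h$ correction from expanding $\lambda(y^*)$ — and check that the linear combination, with the weights $\xi_1$ hitting $\lambda(y^*)$ and $\xi_2$ hitting $\lambda(z)$, reproduces exactly $\frac12(\EG^N)^2 f(x_0) = \frac12(a+b+c)(x_0)$. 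The identities $\xi_1-\xi_2=1$ and the specific value $\xi_1 = \frac12\frac1{\theta(1-\theta)}$ are what make the $\theta$-dependence drop out; this is the same cancellation as in \cite{AndMatt2011}, and I would essentially transcribe that computation into the $\nabla^N$ notation here, being careful that every term is bounded by $\|f\|_3^{\nabla^N}$ times the appropriate power of $N^\gamma$ via Proposition~\ref{prop:deriv}.

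Once the $h^2$ terms are shown to agree exactly, the leading discrepancy is at order $h^3$: both $\frac{h^3}{6}(\EG^N)^3 f$ from $\cP_h$ and the analogous third-order term from $P_{\T,h}$ are of size $O(N^{3\gamma}\|f\|_3^{\nabla^N} h^3)$ — here I use $h < N^{-\gamma}$ to absorb the formally-$h^4$ remainder terms (which carry an extra $N^\gamma$) into the $h^3$ bound, exactly as in the Euler and midpoint proofs — and there is no reason for these third-order terms to cancel, so we get
\begin{equation*}
	\|(P_{\T,h} - \cP_h)f\|_0 = O(N^{3\gamma}\|f\|_3^{\nabla^N} h^3),
\end{equation*}
which gives $\|P_{\T,h}-\cP_h\|_{3\to 0}^{\nabla^N} = O(N^{3\gamma}h^3)$ as claimed. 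I expect the main obstacle to be purely bookkeeping: the second sub-step generator $\TG^N_{y^*,z}$ depends on $y^*$, which itself depends on $h$, so when squaring it and expanding one must be disciplined about which $h$-powers come from the generator's coefficients versus from the time increment $(1-\theta)h$, and one must verify that the "bad" $c$-type term (the one scaling like $N^{2\gamma-\min\{\m_k\}}$ in the midpoint analysis) genuinely appears with the right coefficient on the trapezoidal side so that it cancels rather than merely being dominated — unlike the midpoint method, here the cancellation must be exact at order $h^2$, not approximate. A secondary point to get right is that the standing assumption \eqref{eq:stand_assump} is exactly what lets us replace $[\xi_1\lambda_k(y^*)-\xi_2\lambda_k(t_n)]^+$ by $\xi_1\lambda_k(y^*)-\xi_2\lambda_k(z)$ inside the generator, so that all the operators are linear in the $\lambda$'s and the product rule of Lemma~\ref{lemma:product} applies cleanly.
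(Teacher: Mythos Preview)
Your overall strategy---expand both $\cP_h f(x_0)$ and $P_{\T,h}f(x_0)$ through the $h^2$ term, show they agree exactly, and bound the residual by $O(N^{3\gamma}\|f\|_3^{\nabla^N}h^3)$---is the same as the paper's. The use of the standing assumption \eqref{eq:stand_assump} to drop the $[\cdot]^+$ is also correct, and your remark that going only to $O(h^3)$ (rather than writing out the $(\cA^N)^3$ term) suffices is exactly what the paper does.

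However, there is a real gap in how you propose to handle $y^*$. You write
\[
y^* = z + \theta h\,N^\gamma[\zeta^N]\,\mathrm{diag}(N^c)\lambda(z) + O(N^{2\gamma}h^2)
\]
and then ``feed that into the second-step generator.'' But $y^*$ is \emph{random}: it is the output of an Euler step driven by Poisson increments, not a deterministic quantity. The expression above is $\E_{x_0}[y^*]$, not $y^*$. If you substitute the mean of $y^*$ into $\lambda(\cdot)$ and proceed, you are doing precisely what the midpoint method does, and the same obstruction appears: the $c(x_0)$ term of \eqref{eq:cx0}---the one scaling like $N^{2\gamma-\min\{m_k\}}$---will \emph{not} cancel. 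The whole point of the trapezoidal method is that the fluctuations of $y^*$ around its mean contribute at order $h$ to $\E_{x_0}[\lambda(y^*)\cdot\nabla^N f(y^*)]$, and those fluctuation terms are what kill $c(x_0)$.

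The paper avoids this trap by never Taylor-expanding $y^*$. Instead it conditions on $y^*$, expands the second sub-step conditionally via its generator $B_2^N$ (with frozen coefficients $\xi_1\lambda(y^*)-\xi_2\lambda(x_0)$), and then computes each resulting expectation $\E_{x_0}[g(y^*)]$ using the \emph{discrete} Dynkin expansion for the first sub-step:
\[
\E_{x_0}[g(y^*)] = g(x_0) + \theta h\, B_1^N g(x_0) + O(N^{2\gamma}\|g\|_2^{\nabla^N}h^2),
\]
where $B_1^N = N^\gamma\lambda(x_0)\cdot\nabla^N$ involves the full discrete differences $g(x_0+\zeta_k^N)-g(x_0)$. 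Applied to $g=B_2^N f$, the standing assumption lets one compute $B_1^N g(x_0) = \xi_1\,B_1^N(\cA^N f)(x_0) - \xi_2\,(B_1^N)^2 f(x_0)$, and the key identity $B_1^N(\cA^N f)(x_0)=(\cA^N)^2 f(x_0)$ (since both operators have coefficients $\lambda(x_0)$ when evaluated at $x_0$) then makes the $\theta$-bookkeeping collapse to exactly $\tfrac{h^2}{2}(\cA^N)^2 f(x_0)$. Your reference to \cite{AndMatt2011} is apt---that paper does the same conditioning argument in the diffusive setting---but your written plan does not reflect it. Replace the deterministic expansion of $y^*$ with the $B_1^N$-Dynkin expansion of functionals of $y^*$, and the rest of your outline goes through.
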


\begin{proof}
Consider one step of the method with a step-size of size $h$ and with initial value $x_0$.  Note that the first step of the algorithm produces 
a value $y^*$ that is distributionally equivalent to one produced by a  Markov process with generator $B_1^N$ given by 
\begin{align*}
B_1^N f (x) =   N^{\gamma}  \lambda(x_0)  \cdot {\nabla^N} f(x).   
\end{align*}
Next, given both $x_0$ and $y^*$, step 2 produces a value which is distributionally equivalent to one produced by a Markov process with generator
\begin{align}
\begin{split}
B_2^N f(x) =   N^{\gamma} [\alp_1 \lambda(y^*) - \alp_2 \lambda(x_0) ]^+   \cdot {\nabla^N} f(x).   
\end{split}
\end{align}
 Recall that for the exact process, 
\begin{align*}
\cP_h f(x_0)&= f(x_0) + h\cA^N f(x_0) + \frac{ h^2 }{2} (\cA^N)^2 f(x_0) + O(N^{3\gamma}  \|f\|^{\nabla^N}_3 h^3).
\end{align*}
For the approximate process we have,
\begin{align}
P_{\T,h} f(x_0) &= \E_{x_0} [\E_{x_0}[f(Z^N_\T(h)) | y^*] ] \notag\\
% = \E_{x_0} f(y^*) + \E_{x_0} \left [ \E_{y^*} \left[ \int_0^{(1-\theta)h} B_2 f(Z^N_{\T}(s)) ds  \right] \right ]\notag \\
%&= \E_{x_0} f(y^*) + (1 -\theta) h \E_{x_0}[ B_2 f(y^*) ]+ \frac{(1 -\theta)^2 h^2}{2} \E_{x_0} [B^2_2 f(y^*)]  + O(\|\cB_2^3 f\|_\infty h^3)\\
&= \E_{x_0} f(y^*) + (1 - \theta)h  \E_{x_0}[ B_2^N f(y^*) ]+ \frac{(1 -\theta)^2 h^2}{2} \E_{x_0} [(B^N_2)^2 f(y^*)]  + O(N^{3\gamma}\|f\|^{\nabla^N}_3 h^3).\label{eq:to_expand}
\end{align}
We will expand each piece of \eqref{eq:to_expand} in turn.  Noting that $B_1^N f(x_0) = \cA^N f(x_0)$, the first term is
\begin{align*}
\E_{x_0} f(y^*) &= f(x_0) + \E_{x_0} \left[ \int_0^{\theta h} B_1^N f (Z_s) ds \right] \\%  = f(x_0) + \theta h B_1f(x_0) + \frac{\theta^2h^2}{2}  B_1^2 f(x_0) + O(\|\cB_1^3 f\|_\infty h^3)\\
%&=f(x_0) +  \theta h \cA f(x_0) + \frac{\theta^2h^2}{2}  B_1^2 f(x_0) + O(\|\cB_1^3 f\|_\infty h^3) \\
&=f(x_0) +  \theta h \cA^N f(x_0) + \frac{\theta^2h^2}{2}  (B_1^N)^2 f(x_0) + O(N^{3\gamma}\| f \|_3^{\nabla^N} h^3). 
\end{align*}

\noindent We turn attention to the second term, $(1 - \theta)h  \E_{x_0}[ B_2^N f(y^*) ]$, and begin by making the following definition:
\begin{align*}
g(y^*) &\eqdef B_2^N  f(y^*) =    N^{\gamma} [\xi_1 \lambda(y^*) - \xi_2 \lambda(x_0) ]^+   \cdot {\nabla^N} f(y^*), 
\end{align*}
so that $g(x) =    N^{\gamma} ( [\xi_1 \lambda(x) -  \xi_2 \lambda(x_0) ]^+  \cdot {\nabla^N} )f(x)$.  Because $\xi_1 - \xi_2 = 1$, we have 
\begin{equation*}
   g(x_0) =   N^{\gamma} \lambda(x_0)   \cdot {\nabla^N} f(x_0) = \cA^N f(x_0).
\end{equation*}
By our standing assumption \eqref{eq:stand_assump}
\begin{align*}
 g(x_0 + \zeta_k) - g(x_0)&=  
 % ( [\xi_1 \lambda -  \xi_2 \lambda(x_0) ]^+  \cdot {\nabla^N} )f(x_0+\zeta_k) -   N^{\gamma} ( [\xi_1 \lambda -  \xi_2 \lambda(x_0) ]^+  \cdot {\nabla^N} )f(x_0)\\
      N^{\gamma} (\xi_1 \lambda(x_0 + \zeta_k) - \xi_2\lambda(x_0))\cdot {\nabla^N} f(x_0 + \zeta_k) -   N^{\gamma} \lambda(x_0) \cdot {\nabla^N} f(x_0).
\end{align*}
After some algebra
\begin{align*}
  B_1^N g(x_0) &=   N^{\gamma}  (\lambda(x_0)\cdot {\nabla^N} g)(x_0)  =   N^{\gamma}\sum_k N^{c_k}\lambda_k(x_0)[g(x_0 + \zeta_k) - g(x_0)]\\
%   &=   N^{\gamma}\sum_k N^{c_k} \lambda_k(x_0)   \bigg[ N^{\gamma} (\xi_1 \lambda(x_0 + \zeta_k) - \xi_2\lambda(x_0))\cdot {\nabla^N} f(x_0 + \zeta_k) - N^{\gamma} \lambda(x_0) \cdot {\nabla^N} f(x_0)\bigg]\\
 %  &= \xi_1   N^{\gamma} \sum_k N^{c_k} \lambda_k(x_0)   \bigg[N^{\gamma}\lambda(x_0 + \zeta_k) \cdot {\nabla^N} f(x_0 + \zeta_k) - N^{\gamma}\lambda(x_0) \cdot {\nabla^N} f(x_0) \bigg]\\
 %  &\hspace{.2in} - \xi_2 N^{\gamma} \sum_k N^{c_k} \lambda_k(x_0)  \bigg[   N^{\gamma}\lambda(x_0)\cdot {\nabla^N} f(x_0 + \zeta_k) -  N^{\gamma}\lambda(x_0)\cdot {\nabla^N} f(x_0) \bigg]\\
  &= \xi_1 N^\gamma \lambda(x_0) \cdot  \nabla^N  (N^\gamma  \lambda \cdot  f)(x_0)   - \xi_2 N^\gamma \lambda(x_0) \cdot \nabla^N (  \lambda(x_0) \cdot f)(x_0) \\
   &= \xi_1 (B_1^N \cA^N f(x_0)) - \xi_2 ((B_1^N)^2 f)(x_0).
\end{align*}

\noindent Thus, 
%considering $\nabla^N f$ and $N^\gamma$ in the expression of $g$, 
\begin{align*}
\E_{x_0}[B_2^N  f(y^*)] &= \E_{x_0} [g(y^*)] = g(x_0) +  \theta h B_1^N g (x_0) + O(N^{3\gamma} \| f\|_3^{\nabla^N} h^2  ) \\
%&= \cA f (x_0) + \theta h  (\lambda(x_0)  \cdot {\nabla^N} g)(x_0) + O(\| b\|^2_\infty   \| f \|^{\nabla^N} _2 h^2  )  \\
%&=  \cA f (x_0) + \theta h  \left[ \lambda(x_0)  \cdot {\nabla^N} (\alp_1 b \cdot {\nabla^N} f)(x_0) -   \lambda(x_0)  \cdot {\nabla^N} (  \alp_2 \lambda(x_0) \cdot {\nabla^N} f)(x_0) \right]  + O(\| b\|^2_\infty  \| f \|^{\nabla^N} _2 h^2  )\\
%&=  \cA f (x_0) + \theta h  \left[ \lambda(x_0)  \cdot {\nabla^N} (\alp_1 \cA f)(x_0) -   \lambda(x_0)  \cdot {\nabla^N} ( \alp_2 \lambda(x_0) \cdot {\nabla^N} f)(x_0) \right]  + O(\| b\|^2_\infty  \| f \|^{\nabla^N} _2 h^2  )\\
&= \cA^N f(x_0) + \theta h  \left[ \alp_1 (B_1^N \cA^N f)(x_0) -  \alp_2 (B_1^N)^2 f(x_0) \right] + O(N^{3\gamma} \| f \|^{\nabla^N} _2 h^2  )\\
&= \cA^N f(x_0) + \theta h  \left[ \alp_1  (\cA^N)^2 f (x_0) - \alp_2 (B_1^N)^2 f(x_0) \right] + O(N^{3\gamma} \| f \|^{\nabla^N} _3 h^2  ),\end{align*}
where the last line follows since $B_1^N f (x_0) = \cA^N f(x_0)$ for any $f$.

 Finally, we turn the the last term in \eqref{eq:to_expand}.  Define 
\begin{align*}
q(y^*) &\eqdef (B_2^N)^2  f(y^*) \\
&=  [\alp_1 \lambda(y^*) -  \alp_2 \lambda(x_0) ]^+   \cdot {\nabla^N}  ( [\alp_1 \lambda(y^*) - \alp_2 \lambda(x_0) ]^+ {\nabla^N} f ) (y^*), 
\end{align*}
so that 
\begin{equation*}
   q(x) =[\alp_1 \lambda - \alp_2 \lambda(x_0) ]^+   \cdot {\nabla^N}  ( [\alp_1 \lambda - \alp_2 \lambda(x_0) ]^+ {\nabla^N} f )(x).
 \end{equation*}
By our standing assumption \eqref{eq:stand_assump} we have
\begin{align}
\begin{split}
\E_{x_0} [(B_2^N)^2 f(y^*)] &= \E_{x_0} [q(y^*)] \\
&= q(x_0) + O(N^{3\gamma} \|  f \|_3^{\nabla^N} h  ) \\
&= (B_1^N)^2 f  (x_0) + O(N^{3\gamma} \|  f \|_3^{\nabla^N} h  ). 
\end{split}
\end{align}
Noting that
\begin{equation*}
(1-\theta)\theta \xi_1 = \frac{1}{2} \qquad \text{and} \qquad  (1-\theta)\theta \xi_2 = \frac{(1 - \theta)^2 + \theta^2}{2},
\end{equation*}
we may conclude the following from the above calculations 
\begin{align*}
\E_{x_0}[f( Z_{\T,h}^N) ] &=  \E_{x_0}f(y^*) + (1-\theta) h \E_{x_0} [B_2^N f(y^*) ] + \frac{(1 - \theta)^2 h^2}{2} \E_{x_0} [(B_2^N)^2 f(y^*)] \\
&\hspace{.3in}+ O(N^{3\gamma}  \|f \|^{\nabla^N}_3h^3) \\
%&=f(x_0) +  \theta h  \cA f(x_0) + \frac{\theta^2 h^2}{2} B_1^2 f(x_0) + O(N^{3\gamma} \|f \|^{\nabla^N}_3h^3) \\
%&+(1-\theta) h \cA f(x_0) +  (1-\theta) \theta h^2 \alp_1  \cA^2 f(x_0) - (1 -\theta) \theta h^2 \alp_2 B_1^2 f(x_0) + O(N^{3\gamma} \|f \|^{\nabla^N}_3 h^3)\\
%&+\frac{(1- \theta)^2 h^2}{2} B_1^2 f(x_0) + O(N^{3\gamma} \|f \|^{\nabla^N}_3h^3) \\
& = f(x_0) + \theta h  \cA^N f(x_0) + \frac{\theta^2 h^2}{2} (B_1^N)^2 f(x_0)\\
&\hspace{.2in} +(1-\theta) h \cA^N f(x_0) +  \frac{h^2}{2}  (\cA^N)^2 f(x_0) - \frac{h^2}{2} [(1 - \theta)^2 + \theta^2] (B_1^N)^2 f(x_0) \\
&\hspace{.2in} +\frac{(1- \theta)^2 h^2}{2} (B_1^N)^2 f(x_0) + O(N^{3\gamma} \|f \|^{\nabla^N}_3h^3) \\
&= f(x_0) +  \cA^N f(x_0) +  \frac{h^2}{2} (\cA^N)^2 f(x_0) + O(N^{3\gamma} \|f \|^{\nabla^N}_3h^3).
\end{align*}
Thus 
\begin{equation*}
   \| (P_{\T,h} - \cP_h )  f \|_0 \in O(N^{3\gamma} \| f \|_3^{\nabla^N} h^3),
   \end{equation*}
and the proof is complete.
\end{proof}

\section{Global Bounds and Stability}
\label{sec:regularity}

In Section \ref{sec:global} we bound $\|\cP_t f\|^{\nabla^N}_n $, which was the remaining piece to handle in Theorem \ref{thm:LG} to give us global bounds on the weak error induced by the different methods.  In Section \ref{sec:stability}, we briefly discuss some issues related to stability of the different methods.

\subsection{Bounds on $\|\cP_t f\|^{\nabla^N}_n $}
\label{sec:global} 
In this section we bound  $\|\cP_t f\|^{\nabla^N}_n $, where $n$ is a nonnegative integer and $\cP_t$ is the semigroup operator \eqref{eq:semi_exact} of the scaled process \eqref{eq:scaled}.  We point out, however, that for any process
 $X^N$ for which $\cP_t$ is well behaved, in that $\|\cP_t\|_{n\to 0}^{\nabla^N}$ is bounded uniformly in $N$, the following results are not needed, and, in fact, would most likely be a \textit{least optimal} bound, as the bound grows exponentially in $N^{\gamma}t$. 
% For many, though certainly not all, models of interest, we will find that $\gamma \le 0$. 
%The results below point out that the main theorems of this paper are most applicable when $\gamma \leq 0$. 
Note that any system satisfying the classical scaling has $\gamma = 0$.  We also point out that the arguments used below are quite similar to those used in \cite{Li2011} by Hu, Li, and Min, which were extensions of those used in \cite{AndGangKurtz2011} by Anderson, Ganguly, and Kurtz.

For $t\ge 0$ and any $x\in \R^d$, We define 
\begin{equation*}
  v(t,x) \eqdef \cP_t f(x) = \E_x f(X^N_t).
\end{equation*}

\begin{thm}\label{thm:semi_bound}
If $ \|f\|_n^{\nabla^N} < \infty$, then 
\begin{equation*}
  \| v(t, \cdot) \|_n^{\nabla^N}  = \|\cP_t f\|^{\nabla^N}_n  \le \|f\|_n^{\nabla^N} e^{ N^{\gamma} C_n  t}
  \end{equation*}
  where
  \begin{equation}
  	C_n = 2\left( \|\lambda\|^{\nabla^N}_1 n \ R  + R(n- 1) \|\lambda\|_{n}^{\nabla^N}\right).
	\label{eq:Cn}
  \end{equation}
 \end{thm}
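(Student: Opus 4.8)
The plan is to derive a differential inequality for the quantities $u_n(t) \eqdef \|v(t,\cdot)\|_n^{\nabla^N}$ and then apply Gr\"onwall's inequality. The starting point is the Kolmogorov backward equation $\partial_t v(t,x) = (\cA^N v(t,\cdot))(x) = N^\gamma \lambda(x)\cdot \nabla^N v(t,x)$, which holds under the running assumption that the $\lambda_k$ and their derivatives are uniformly bounded. The key observation is that if $I$ is a multi-subset of $\{1,\dots,R\}$ with $|I| \le n$, then $\nabla_I^N$ commutes with $\partial_t$, so $\partial_t (\nabla_I^N v)(t,x) = \nabla_I^N(\cA^N v(t,\cdot))(x)$. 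Thus I need to bound $\|\nabla_I^N \cA^N v\|_\infty$ in terms of $\|v\|_n^{\nabla^N}$.

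The main work is the combinatorial/product-rule estimate: expanding $\nabla_I^N(N^\gamma \lambda \cdot \nabla^N v)$ using the product rule of Lemma \ref{lemma:product} (applied repeatedly, one $\nabla^N_{\ell_i}$ at a time). Each application of $\nabla^N_k$ to a product $g\cdot q$ produces three terms: $\nabla_k^N g \cdot q$, $g\cdot \nabla_k^N q$, and $N^{-c_k}\nabla_k^N g\cdot \nabla_k^N q$. Applying this $|I|$ times to $N^\gamma\lambda\cdot\nabla^N v$, every resulting term is $N^\gamma$ times a dot product of a derivative of $\lambda$ (hit by some subset of the $\nabla^N_{\ell_i}$, and possibly carrying a benign $N^{-c_k}$ factor which is $O(1)$) with a derivative of $\nabla^N v$ (hit by the complementary subset, hence of the form $\nabla^N_J v$ with $|J|\le |I|+1 \le n+1$... — here one must be careful, since a naive bound gives $n+1$ rather than $n$). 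The resolution, exactly as in the statement's constant $C_n$, is to separate the single term in which \emph{all} of $I$ falls on $\nabla^N v$ (the "leading" term $N^\gamma \lambda\cdot \nabla^N(\nabla_I^N v)$), for which I do \emph{not} bound $\nabla^N(\nabla_I^N v)$ by $\|v\|_{n+1}^{\nabla^N}$ but instead keep it as $\sum_k N^{c_k}(\nabla_I^N v(x+\zeta_k^N) - \nabla_I^N v(x))$, a difference of two values of $\nabla_I^N v$, hence bounded by $2\|\lambda\|_\infty$ times $\|v\|_n^{\nabla^N}$ — wait, more precisely bounded using $\|\lambda\|_1^{\nabla^N}$-type factors. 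The accounting of how many terms of each type arise, and that the number of terms with at least one $\nabla^N_{\ell_i}$ landing on $\lambda$ is $O(nR)$ while the "all on $\lambda$'s derivative of order up to $n$" contributions give the $R(n-1)\|\lambda\|_n^{\nabla^N}$ piece, is precisely what produces the constant $C_n$ in \eqref{eq:Cn}. The factor $2$ reflects that $\nabla^N_k$ is a difference operator, and the factor $R$ comes from the sum over reactions in the dot product.

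Putting this together, I obtain the pointwise bound $\|\nabla_I^N \cA^N v(t,\cdot)\|_\infty \le N^\gamma C_n \|v(t,\cdot)\|_n^{\nabla^N}$ for every $|I|\le n$, hence $\left|\frac{d}{dt}(\nabla_I^N v)(t,x)\right| \le N^\gamma C_n u_n(t)$; since this holds uniformly in $x$ and $I$, one gets $D^+ u_n(t) \le N^\gamma C_n u_n(t)$ (using that the sup of functions each satisfying such a bound satisfies it in the upper Dini-derivative sense, or more simply integrating $\nabla_I^N v(t,x) = \nabla_I^N f(x) + \int_0^t \nabla_I^N \cA^N v(s,\cdot)(x)\,ds$ and taking sup). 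Gr\"onwall then yields $u_n(t) \le u_n(0) e^{N^\gamma C_n t} = \|f\|_n^{\nabla^N} e^{N^\gamma C_n t}$, which is the claim.

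I expect the main obstacle to be the bookkeeping in the product-rule expansion: correctly isolating the one "all derivatives on $\nabla^N v$" term so that the recursion closes at level $n$ rather than $n+1$, and verifying that the number of terms and the orders of the $\lambda$-derivatives appearing match the stated $C_n = 2(\|\lambda\|_1^{\nabla^N} nR + R(n-1)\|\lambda\|_n^{\nabla^N})$. A secondary technical point is justifying differentiation under the expectation / validity of the backward equation in this class of functions, which the running assumption (bounded intensities and derivatives, so $X^N$ is non-explosive with all moments controlled) should handle, possibly with a localization argument.
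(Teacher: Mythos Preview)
Your overall strategy---Kolmogorov's backward equation, commute $\nabla_I^N$ through $\partial_t$, expand by the product rule, close at level $n$, Gr\"onwall---is the same as the paper's. But there is a genuine gap in your treatment of the ``leading'' term $N^\gamma \lambda\cdot \nabla^N(\nabla_I^N v)$. You propose to write it as $N^\gamma\sum_k \lambda_k(x)\,N^{c_k}\bigl(\nabla_I^N v(t,x+\zeta_k^N)-\nabla_I^N v(t,x)\bigr)$ and then bound it by ``$2\|\lambda\|_\infty \|v\|_n^{\nabla^N}$''. That drops the factor $N^{c_k}$, which is \emph{not} $O(1)$ in general (in the classical scaling $c_k\equiv 1$, so $N^{c_k}=N$). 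Keeping it gives a bound of order $N^{\gamma}\bigl(\sum_k N^{c_k}\|\lambda_k\|_\infty\bigr)\|v\|_n^{\nabla^N}$, and Gr\"onwall would then yield an exponent $\sim N^{\gamma+\max_k c_k}\,t$ instead of $N^{\gamma}C_n t$, which is not the theorem.

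The paper resolves this with a maximum--principle (sign) argument rather than a crude bound. One works at the maximizer $(I^*,x^*)$ of $|\nabla_I^N v(t,x)|$ over $|I|\le n$ and $x$, and differentiates $\tfrac12(\nabla_{I^*}^N v(t,x^*))^2$. The leading term then becomes
\[
N^\gamma\sum_k \lambda_k(x^*)\,N^{c_k}\bigl(\nabla_{I^*}^N v(t,x^*+\zeta_k^N)-\nabla_{I^*}^N v(t,x^*)\bigr)\,\nabla_{I^*}^N v(t,x^*),
\]
and since $|\nabla_{I^*}^N v(t,x^*+\zeta_k^N)|\le |\nabla_{I^*}^N v(t,x^*)|$ by maximality, each summand is $\le 0$. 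The dangerous $N^{c_k}$ factors thus disappear for free. The remaining terms in the expansion (organized in the paper as Lemma~\ref{lem:deriv}: the $|I|$ ``first--order in $\lambda$'' terms $\sum_i(\nabla^N_{\ell_i}\lambda\cdot\nabla^N)\nabla^N_{I\setminus\ell_i}v$ and a residual $q_I$ built from at most $R(|I|-1)$ pieces of order $\le n$ with coefficients bounded by $\|\lambda\|_{|I|}^{\nabla^N}$) involve only $\nabla_J^N v$ with $|J|\le |I^*|$, so the recursion closes at level $n$ and produces exactly the constant $C_n$ in \eqref{eq:Cn}. Your Dini--derivative route can be made to work, but only after inserting this sign argument for the leading term; a pure sup--norm bound on $\partial_t\nabla_I^N v$ will not.
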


We delay the proof of Theorem \ref{thm:semi_bound} until the following Lemma is shown, the proof of which is similar to that found in \cite{Li2011}, which itself was an extension of the proof of Lemma 4.3 in \cite{AndGangKurtz2011}.

\begin{lemma}
 Given a multiset $I$ of $\{1, \cdots , R \} , $ there exists a function $q_I(x)$ that is a linear function of terms of the form $\nabla_{J}^N v(t,x)$ with $|J| < |I|$, so that
 \begin{equation*}
 	\del_t \nabla_I^N v(t,x) = N^{\gamma} (\lambda\cdot \nabla^N)\nabla_I^N v(t,x) +  N^{\gamma} \sum_{i = 1}^{|I|} (\beta_i \cdot \nabla^N) \nabla_{I\backslash \ell_i} v(t,x + \zeta_{\ell_i}) + N^{\gamma}q_I(x),
 \end{equation*}
 where $\beta_i = \nabla_{\ell_i}^N \lambda$.  Further, $q_I$ consists of at most $R(|I| - 1)$ terms of the form $\nabla_{J}^N v(t,x)$, each of whose coefficients are bounded above by $\|\lambda\|_{|I|}^{\nabla^N}$.
 \label{lem:deriv}
 \end{lemma}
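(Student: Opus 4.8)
The plan is to differentiate $\nabla_I^N v(t,x)$ in time and track how the operator $\nabla_I^N$ interacts with the evolution equation $\partial_t v = N^\gamma(\lambda\cdot\nabla^N)v$. Since $v(t,\cdot) = \cP_t f$ solves Kolmogorov's backward equation $\partial_t v(t,x) = \cA^N v(t,x) = N^\gamma(\lambda\cdot\nabla^N)v(t,x)$, and $\nabla_I^N$ is a fixed finite-difference operator that commutes with $\partial_t$, we have $\partial_t \nabla_I^N v = \nabla_I^N\big(N^\gamma(\lambda\cdot\nabla^N)v\big)$. The whole content of the lemma is to expand the right-hand side using the product rule of Lemma \ref{lemma:product} repeatedly, one factor $\nabla_{\ell_i}^N$ at a time, and to bookkeep the resulting terms.

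First I would prove it by induction on $|I|$. For $|I|=0$ the claim is just the backward equation itself, with $q_I \equiv 0$. For the inductive step, write $I = \{\ell_1\}\cup I'$ with $|I'| = |I|-1$, so $\nabla_I^N v = \nabla_{\ell_1}^N(\nabla_{I'}^N v)$. Apply $\nabla_{\ell_1}^N$ to the evolution equation for $\nabla_{I'}^N v$ supplied by the inductive hypothesis. The leading term $\nabla_{\ell_1}^N\big(N^\gamma(\lambda\cdot\nabla^N)\nabla_{I'}^N v\big)$ is handled by the Corollary following Lemma \ref{lemma:product} (the version with $f$ replaced by $\nabla_{I'}^N v$): it produces $N^\gamma(\lambda\cdot\nabla^N)\nabla_{\ell_1}^N\nabla_{I'}^N v = N^\gamma(\lambda\cdot\nabla^N)\nabla_I^N v$ (the desired principal term), plus $N^\gamma(\nabla_{\ell_1}^N\lambda\cdot\nabla^N)\nabla_{I'}^N v$, which after shifting the argument by $\zeta_{\ell_1}^N$ is exactly the $i=1$ term $N^\gamma(\beta_1\cdot\nabla^N)\nabla_{I'}^N v(t,x+\zeta_{\ell_1})$ with $\beta_1 = \nabla_{\ell_1}^N\lambda$, plus a cross term $N^{\gamma-c_{\ell_1}}(\nabla_{\ell_1}^N\lambda\cdot\nabla^N)\nabla_{\ell_1}^N\nabla_{I'}^N v$, which is lower order in the number of $\lambda$-derivatives and gets absorbed into $q_I$. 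The remaining terms come from applying $\nabla_{\ell_1}^N$ to the $\sum_{i}(\beta_i\cdot\nabla^N)\nabla_{I'\backslash\ell_i}v(t,x+\zeta_{\ell_i})$ sum and to $N^\gamma q_{I'}(x)$ from the inductive hypothesis; each application of the product rule either reproduces a term of the expected form (possibly with $\beta_i$ replaced by $\nabla_{\ell_1}^N\beta_i = \nabla_{\ell_1}^N\nabla_{\ell_i}^N\lambda$, still bounded by $\|\lambda\|_{|I|}^{\nabla^N}$) or generates a new $\nabla_J^N v$ term with $|J|<|I|$ for $q_I$.

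The bookkeeping is the real work: I need to verify the count — that $q_I$ has at most $R(|I|-1)$ terms of the form $\nabla_J^N v(t,x)$ — and that every coefficient appearing is, up to the overall $N^\gamma$, a finite difference of $\lambda$ of order at most $|I|$, hence bounded by $\|\lambda\|_{|I|}^{\nabla^N}$. The count should follow because each of the $|I|$ applications of a single $\nabla_{\ell}^N$ via Lemma \ref{lemma:product} contributes at most one genuinely new "cross" term (the $N^{-c_k}$ piece), and summing over the $R$ possible reaction indices in the $\lambda\cdot\nabla^N$ contraction gives the factor $R$; combined with the $R(|I'|-1) = R(|I|-2)$ terms inherited and differentiated from $q_{I'}$, a careful tally lands at $R(|I|-1)$. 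The coefficient bound is immediate from the definition of $\|\lambda\|_n^{\nabla^N}$ once one notes that no term ever needs more than $|I|$ total $\nabla^N$-applications landing on $\lambda$.

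The main obstacle I anticipate is purely combinatorial rather than conceptual: organizing the repeated product-rule expansion so that the $N$-powers line up correctly (the $N^{-c_k}$ factors from Lemma \ref{lemma:product} must combine with the ambient $N^{c_k}$ inside $\nabla^N$ so that everything is cleanly $N^\gamma$ times an $O(1)$-in-$N$ finite-difference operator, using $c_k\le m_k$ and the Running Assumption), and making the "at most $R(|I|-1)$ terms" count tight enough to match the statement. I would recommend presenting the induction cleanly and relegating the exact term-count to a short explicit tally, rather than attempting a closed-form combinatorial identity.
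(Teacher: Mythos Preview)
Your inductive strategy via the product rule is exactly the paper's approach, but there is a genuine gap in how you handle the cross term. You write that the product rule gives the principal term, the term $N^\gamma(\nabla_{\ell_1}^N\lambda\cdot\nabla^N)\nabla_{I'}^N v(t,x)$, and the cross term $N^{\gamma-c_{\ell_1}}(\nabla_{\ell_1}^N\lambda\cdot\nabla^N)\nabla_{\ell_1}^N\nabla_{I'}^N v$, and you propose to absorb the cross term into $q_I$. This fails: the cross term involves $\nabla_j^N\nabla_{\ell_1}^N\nabla_{I'}^N v = \nabla_j^N\nabla_I^N v$, i.e.\ $\nabla_J^N v$ with $|J|=|I|+1$, whereas the lemma demands that every term in $q_I$ have $|J|<|I|$. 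So the cross term simply cannot live in $q_I$, and your term-count argument, which is built on counting these cross contributions, collapses with it.

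What you are missing is the elementary identity
\[
(\nabla_k^N\lambda\cdot\nabla^N)g(x) \;+\; N^{-c_k}(\nabla_k^N\lambda\cdot\nabla^N)\nabla_k^N g(x) \;=\; (\nabla_k^N\lambda\cdot\nabla^N)g(x+\zeta_k^N),
\]
which follows directly from the definition of $\nabla_k^N$. This is precisely the mechanism behind the ``shift'' you invoke without justification: the unshifted $\beta$-term and the cross term combine \emph{exactly} into the shifted $\beta$-term, with nothing left over. The same identity is needed again when you apply $\nabla_{\ell_1}^N$ to each $(\beta_i\cdot\nabla^N)\nabla_{I'\setminus\ell_i}v(t,x+\zeta_{\ell_i})$ from the inductive hypothesis. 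Once you use it, the terms that genuinely populate $q_{I}$ are $\nabla_{\ell_1}^N q_{I'}$ together with the pieces $(\nabla_{\ell_1}^N\beta_i\cdot\nabla^N)\nabla_{I'\setminus\ell_i}v(t,x+\zeta_{\ell_i}+\zeta_{\ell_1})$; these do satisfy $|J|<|I|$ and carry coefficients bounded by $\|\lambda\|_{|I|}^{\nabla^N}$, and the count $R(|I|-1)$ should be tallied from them.
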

 
 \begin{proof}
This goes by induction. For $|I| = 0$, the statement follows because 
\begin{equation}
   \del_t v (t,x) =  N^{\gamma} (\lambda \cdot \nabla^N) v(t,x).
   \label{eq:zero_case}
\end{equation}
Note that in this case, there are no $\beta_i$ or $q$ terms.
It is instructive to perform the $|I| = 1$ case.  We have
\begin{align*}
\del_t \nabla^N_k v (t,x) &= \nabla^N_k \del_t v (t,x) \\
&=  \nabla^N_k ( N^{\gamma}  \lambda \cdot \nabla^N v(t,x)) \\
&=     N^{\gamma} (  \nabla^N_k \lambda \cdot  \nabla^N ) v(t,x) +  N^{\gamma}   \lambda  \cdot   \nabla^N_k \nabla^N v(t,x)  + 
 N^{\gamma} (N^{-c_k} \nabla^N_k  \lambda \cdot  \nabla^N_k \nabla^N v(t,x)).
 \end{align*}
 Note that for any $g:\R^d \to \R$ 
 \begin{align}
 	( \nabla^N_k \lambda  \cdot  \nabla^N) g(x) + (N^{-c_k} \nabla^N_k  \lambda  \cdot \nabla^N) \nabla_k^N g(x)  &= (\nabla_k^N\lambda  \cdot \nabla^N) g(x+\zeta_k).
	\label{eq:RE1}
 \end{align}
 Therefore, with $g(x) = v(t,x)$ in the above, we have
 \begin{equation*}
 	\del_t \nabla^N_k v (t,x) = N^{\gamma}  ( \lambda(x)  \cdot \nabla^N ) \nabla^N_k v(t,x) + N^{\gamma} (\nabla_k^N\lambda(x) \cdot \nabla^N) v(t,x+\zeta_k).
 \end{equation*}

Now assume that it holds for a set of size  $\leq |I|$.  Then, using the inductive hypothesis, Lemma \ref{product3}, and equation \eqref{eq:RE1} yields
\begin{align*}
	\del_t \nabla_k^N& \nabla_I^N v(t,x) \\
	&= \nabla_k^N \del_t \nabla_I^N v(t,x)\\
	&=N^{\gamma} \nabla_k^N \bigg[ (\lambda\cdot \nabla^N)\nabla_I^N v(t,x) +   \sum_{i = 1}^{|I|} (\beta_i \cdot \nabla^N) \nabla_{I\backslash \ell_i} v(t,x + \zeta_{\ell_i}) + q_I(x)\bigg]\\
%	&= N^{\gamma} \bigg[ \nabla^N_k \lambda  \cdot  \nabla^N \nabla_I^N v(t,x) +  \lambda  \cdot   \nabla^N  \nabla^N_k \nabla_I^N v(t,x)  + 
% N^{-c_k} \nabla^N_k  \lambda  \cdot   \nabla^N\nabla^N_k \nabla_I^N v(t,x) \bigg]\\
% & \hspace{.2in} + N^{\gamma} \sum_{i = 1}^{|I|} \bigg[  \nabla^N_k \beta_i  \cdot  \nabla^N \nabla_{I\backslash \ell_i}^N v(t,x+\zeta_{\ell_i}) +   \beta_i  \cdot    \nabla^N\nabla^N_k \nabla_{I\backslash \ell_i}^N v(t,x+\zeta_{\ell_i})\\
% &\hspace{.9in}  + 
% N^{-c_k} \nabla^N_k  \beta_i  \cdot   \nabla^N \nabla^N_k \nabla_{I\backslash \ell_i}^N v(t,x+\zeta_{\ell_i})\bigg]\\
% &\hspace{.2in} + N^{\gamma} \nabla_k^N q_I(x)\\
 &= N^{\gamma} \bigg[(\lambda  \cdot   \nabla^N)  \nabla^N_{I\cup k} v(t,x) +   ( \nabla^N_k  \lambda  \cdot   \nabla^N ) \nabla_I^N v(t,x + \zeta_k) \bigg]\\
 & \hspace{.2in} + N^{\gamma} \sum_{i = 1}^{|I|} \bigg[ (\beta_i  \cdot    \nabla^N) \nabla^N_k \nabla_{I\backslash \ell_i}^N v(t,x+\zeta_{\ell_i}) + (\nabla^N_k \beta_i  \cdot  \nabla^N) \nabla_{I\backslash \ell_i}^N v(t,x+\zeta_{\ell_i}+ \zeta_k)\bigg] \\
 &\hspace{.2in} + N^{\gamma} \nabla_k^N q_I(x)\\
 &= N^{\gamma}(\lambda  \cdot   \nabla^N)  \nabla^N_{I\cup k} v(t,x)  + N^{\gamma}\bigg[ ( \nabla^N_k  \lambda  \cdot   \nabla^N ) \nabla_{I\cup k \backslash k}^N v(t,x + \zeta_k) +  \sum_{i = 1}^{|I|}(\beta_i  \cdot    \nabla^N)  \nabla_{I\cup k \backslash \ell_i}^N v(t,x+\zeta_{\ell_i})\bigg]\\
 &\hspace{.2in} + N^{\gamma}\bigg[ \nabla_k^N q_I(x) + (\nabla^N_k \beta_i  \cdot  \nabla^N) \nabla_{I\backslash \ell_i}^N v(t,x+\zeta_{\ell_i} + \zeta_k)\bigg],
\end{align*}
showing the result.
\end{proof}

\begin{proof}(\textit{of Theorem \ref{thm:semi_bound}}\ )

Let $n \ge 0$.  Define  
\begin{equation*}
  U_n(t) \eqdef   \max_{x , |I| \le n  } | \nabla^N_{I }v( t , x ) | = \| v \|_n^{\nabla^N} . 
  \end{equation*}
Each $\nabla_I^N v(t,x)$ is a continuously differentiable function with respect to $t$.  Therefore, the maximum above is achieved at some $(I^*, x^*)$ for all $t \in [0,t_1]$ where $t_1 >0$.  Fixing this choice of $(I^*, x^*)$, we have 
\begin{equation*}
   U_n(t) = \nabla^N_{I^* }v( t , x^*)
\end{equation*}
for all $t < t_1$.
  
  Note that
\begin{align}
\begin{split}
  [(\lambda\cdot \nabla^N)\nabla_{I^*}^N& v(t,x^*)]\nabla_{I^*}^N v(t,x^*)  = \sum_k  \lambda_k(x) (\nabla_k^N \nabla_{I^*}^N v(t,x^*)) \nabla_{I^*}^N v(t,x^*)\\
  &= \sum_k N^{c_k} \lambda_k(x) (\nabla_{I^*}^N v(t,x^* + \zeta_k) - \nabla_{I^*}^N v(t,x^*)) \nabla_{I^*}^N v(t,x^*)\\
  &\le 0,
  \end{split}
  \label{eq:RE2}
\end{align}
where the final inequality holds by the specific choice of $I^*$ and $x^*$.
Also note that for any $\ell_i \in I^*$ and any choice of $x$
\begin{align}
| \nabla^N \nabla^N_{{I^*} \backslash \ell_i}  v(t, x)  |  \le  \sum_{k=1}^R |\nabla_k \nabla^N_{{I^*} \backslash \ell_i}  v(t, x)|  \leq R |\nabla^N_{I^*} v(t, x^*) |.
\label{eq:RE3}
\end{align}

\noindent From Lemma \ref{lem:deriv} and equations \eqref{eq:RE2} and \eqref{eq:RE3}, we have 
\begin{align}
\frac{1}{2} \del_t & (\nabla^N_{I^*} v(t, x^*))^2 =  (\del_t  \nabla^N_{I^*} v(t, x^*) ) \nabla^N_{I^*} v(t, x^*) \notag \\
&= N^{\gamma}\bigg[ (\lambda\cdot \nabla^N)\nabla_{I^*}^N v(t,x^*) +  \sum_{i = 1}^{|I^*|} (\beta_i \cdot \nabla^N) \nabla_{I^*\backslash \ell_i} v(t,x^* + \zeta_{\ell_i}) + q_{I^*}(x^*) \bigg] \nabla^N_{I^*} v(t, x^*)\label{eq:RE4}\\
&\le  N^{\gamma}\bigg[ \|\lambda\|^{\nabla^N}_1 |I^*| \ R \ |\nabla^N_{I^*} v(t, x^*)|^2 + R(|I^*| - 1) \|\lambda\|_{|I^*|}^{\nabla^N} |\nabla^N_{I^*} v(t, x^*)|^2\bigg],\notag
\end{align}
where we have used the fact that each $\beta_i = \nabla_{\ell_i} \lambda $ for $\ell_i \in I^*$.
Setting 
\begin{equation}
C_n =  2\left( \|\lambda\|^{\nabla^N}_1 n \ R  + R(n- 1) \|\lambda\|_{n}^{\nabla^N}\right),
\end{equation} 
we see by an application of Gronwall's inequality that the conclusion of the theorem holds for all $t < t_1$.  That is, for $t< t_1$
\begin{equation*}
 U_n(t)\le  \|f\|_n^{\nabla^N} e^{ N^{\gamma} C_n  t}.
\end{equation*}
To continue, repeat the above argument on the interval $[t_1, t_2)$, with $I^*,x^*$ again chosen to maximize $U_n$ on that interval, and note that 
\begin{equation*}
	U_n(t_1) \le \|f\|_n^{\nabla^N} e^{ N^{\gamma} C_n  t_1},
\end{equation*}
so that we may conclude that for $t_1\le t < t_2$,
\begin{equation*}
	U_n(t) \le \|f\|_n^{\nabla^N} e^{ N^{\gamma} C_n  t_1} e^{N^{\gamma} C_n  (t - t_1) }=  \|f\|_n^{\nabla^N} e^{ N^{\gamma} C_n  t}. 
\end{equation*}
Continuing on, we see that $t_i \to \infty$ as $i \to \infty$ by the boundedness of the time derivatives of $v(t,x)$, thereby concluding the proof.
\end{proof}

\begin{remark}
In the theorem above, $C_n \in \| \lambda \|_n^{\nabla^N}. $
\end{remark}

Combining all of the previous results, we have the following theorems.

\begin{thm} \textbf{(Global bound for the Euler method)} \\
Suppose that the step size $h$ satisfies 
$h <  N^{- \gamma},$  and $T = nh$. Then 
\begin{equation*}
\| (P_{E,h}^n  - \cP_{nh}) \|_{2 \to 0}^{\nabla^N} =  O(  N^{2\gamma}  h e^{C_2N^{\gamma} T} )
\end{equation*}
where $C_2 \in O(\| \lambda \|_2^{\nabla^N} )$ is defined in \eqref{eq:Cn}. 
\label{thm:euler_global}
\end{thm}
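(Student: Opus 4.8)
The strategy is purely compositional: combine Corollary \ref{thm:global_bound} (global error from local error, specialized to $\cM = \nabla^N$), the one-step Euler estimate of Theorem \ref{thm:local_Euler}, and the semigroup regularity bound of Theorem \ref{thm:semi_bound}. No genuinely new computation is needed; the only real content is bookkeeping of the $N$-powers and the identity $n = T/h$, which is exactly what downgrades a second-order local error to a first-order global one.

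First I would apply Corollary \ref{thm:global_bound} with $m = 2$ and $P_h = P_{E,h}$. This is legitimate: $P_{E,h}$ is the one-step transition operator of the time-homogeneous Markov chain generated by Algorithm \ref{alg:Euler}, hence a sup-norm contraction, which is the sole structural hypothesis used in the proof of Theorem \ref{thm:LG}. For $f \in C_0^2(\R^d,\R)$ this gives
\[
\|(P_{E,h}^n - \cP_{nh}) f\|_0^{\nabla^N}
= O\!\left(n\, \|P_{E,h} - \cP_h\|_{2\to 0}^{\nabla^N}\ \max_{\ell \in \{1,\dots,n\}} \| \cP_{\ell h} f \|_2^{\nabla^N}\right).
\]
Next I would substitute the two remaining bounds. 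Under the stability restriction $h < N^{-\gamma}$, Theorem \ref{thm:local_Euler} gives $\|P_{E,h} - \cP_h\|_{2\to 0}^{\nabla^N} = O(N^{2\gamma} h^2)$. Theorem \ref{thm:semi_bound} applied at level $n = 2$ gives, for every $\ell \le n$, $\|\cP_{\ell h} f\|_2^{\nabla^N} \le \|f\|_2^{\nabla^N} e^{N^\gamma C_2 \ell h} \le \|f\|_2^{\nabla^N} e^{N^\gamma C_2 T}$, since $\ell h \le nh = T$ and the exponential is increasing. Plugging these in and using $nh = T$,
\[
\|(P_{E,h}^n - \cP_{nh}) f\|_0^{\nabla^N}
= O\!\left(n\cdot N^{2\gamma} h^2 \cdot e^{N^\gamma C_2 T}\right)\|f\|_2^{\nabla^N}
= O\!\left(N^{2\gamma}\, T h\, e^{C_2 N^\gamma T}\right)\|f\|_2^{\nabla^N}.
\]
Recalling $\|g\|_0^{\nabla^N} = \|g\|_\infty$ by \eqref{eq:0norm}, dividing through by $\|f\|_2^{\nabla^N}$, taking the supremum over $f$, and absorbing the fixed horizon $T$ into the implied constant yields $\|P_{E,h}^n - \cP_{nh}\|_{2\to 0}^{\nabla^N} = O(N^{2\gamma} h\, e^{C_2 N^\gamma T})$, which is the claim; the assertion $C_2 \in O(\|\lambda\|_2^{\nabla^N})$ is the content of the remark after Theorem \ref{thm:semi_bound} together with \eqref{eq:Cn}.

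\textbf{Main obstacle.} There is essentially no hard step here — the substance was already discharged in Theorems \ref{thm:local_Euler} and \ref{thm:semi_bound}. The two points needing a moment's care are: (i) confirming that the factor $n$ coming from Corollary \ref{thm:global_bound} combines with the $h^2$ of the local error to produce $nh^2 = Th$, the mechanism converting second-order local accuracy into first-order global accuracy; and (ii) checking that the norm level at which the local Euler estimate is controlled ($m = 2$) matches the level at which the semigroup must be bounded, so that the exponential rate is genuinely $C_2$ and not some larger $C_n$. Both are immediate once the chain of inequalities above is written out.
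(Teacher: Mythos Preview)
Your proposal is correct and follows exactly the route the paper intends: the paper presents Theorem~\ref{thm:euler_global} as an immediate consequence of ``combining all of the previous results,'' namely Corollary~\ref{thm:global_bound} with $m=2$, the local Euler estimate of Theorem~\ref{thm:local_Euler}, and the semigroup bound of Theorem~\ref{thm:semi_bound}, together with $nh=T$. Your bookkeeping of the $N$-powers, the conversion $n\cdot h^2 = Th$, and the matching of the norm level $m=2$ to the constant $C_2$ are all exactly what is needed.
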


\begin{thm} \textbf{(Global bound for the midpoint method)}\\
Suppose that the step size $h$ satisfies 
$h <  N^{- \gamma},$  and $T = nh$. Then  
\begin{equation*} 
\| (P_{M,h}^n  - \cP_{nh}) \|_{3 \to 0}^{\nabla^N} =  O( [N^{3\gamma} h^2 +  N^{2\gamma - \min \{ \m_k \}} h ] e^{C_3N^{\gamma} T} )
\end{equation*}
where $C_3 \in O(\| \lambda \|_3^{\nabla^N} )$ is defined in \eqref{eq:Cn}. 
\label{thm:mdpt_global}
\end{thm}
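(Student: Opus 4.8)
This statement is a direct consequence of the three preceding results, so the plan is essentially bookkeeping: combine the one-step error bound of Theorem~\ref{thm:mdpt_local}, the global-from-local estimate of Corollary~\ref{thm:global_bound} (i.e.\ Theorem~\ref{thm:LG} with $\cM = \nabla^N$), and the semigroup regularity bound of Theorem~\ref{thm:semi_bound}. First I would apply Corollary~\ref{thm:global_bound} with $m = 3$ and $P_h = P_{M,h}$ to obtain, for any $f \in C_0^3(\R^d,\R)$,
\begin{equation*}
  \|(P_{M,h}^n - \cP_{nh}) f\|_0^{\nabla^N} = O\!\left( n\, \|P_{M,h} - \cP_h\|_{3\to 0}^{\nabla^N} \, \max_{\ell \in \{1,\dots,n\}} \|\cP_{\ell h} f\|_3^{\nabla^N} \right).
\end{equation*}

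Next I would feed in the two ingredients. Since we assume $h < N^{-\gamma}$, Theorem~\ref{thm:mdpt_local} applies at each step and gives $\|P_{M,h} - \cP_h\|_{3\to 0}^{\nabla^N} = O(N^{3\gamma} h^3 + N^{2\gamma - \min\{\m_k\}} h^2)$; this bound is uniform in the starting state because it is already phrased as an operator norm. For the semigroup factor, Theorem~\ref{thm:semi_bound} gives $\|\cP_{t} f\|_3^{\nabla^N} \le \|f\|_3^{\nabla^N} e^{N^\gamma C_3 t}$, and since $C_3 \ge 0$ the maximum over $\ell \le n$ is attained at $\ell = n$, so that $\max_\ell \|\cP_{\ell h} f\|_3^{\nabla^N} \le \|f\|_3^{\nabla^N} e^{N^\gamma C_3 nh} = \|f\|_3^{\nabla^N} e^{N^\gamma C_3 T}$. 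Substituting $n = T/h$ and collecting powers of $h$, the combination $n\,\|P_{M,h}-\cP_h\|_{3\to 0}^{\nabla^N}\,e^{N^\gamma C_3 T}$ becomes
\begin{equation*}
  \frac{T}{h}\left(N^{3\gamma} h^3 + N^{2\gamma - \min\{\m_k\}} h^2\right) e^{N^\gamma C_3 T} = T\left(N^{3\gamma} h^2 + N^{2\gamma - \min\{\m_k\}} h\right) e^{N^\gamma C_3 T},
\end{equation*}
and, $T$ being fixed, it is absorbed into the $O(\cdot)$. Dividing through by $\|f\|_3^{\nabla^N}$ and taking the supremum over $f \in C_0^3$ (which is the definition of $\|\cdot\|_{3\to 0}^{\nabla^N}$) yields the asserted bound, with $C_3 \in O(\|\lambda\|_3^{\nabla^N})$ as recorded in the remark following Theorem~\ref{thm:semi_bound}.

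I do not anticipate a real obstacle; the argument is formal. The only things to keep straight are (i) that the standing hypothesis $h < N^{-\gamma}$ makes the local bound valid at every one of the $n$ steps, (ii) that $f$ must be taken smooth enough, here $f \in C_0^3$, for $\|f\|_3^{\nabla^N}$ to be finite and for Theorem~\ref{thm:semi_bound} to apply, and (iii) that the exponential constant must be taken at order $3$, i.e.\ $C_3$ and not $C_2$, since the governing norm is $\|\cdot\|_3^{\nabla^N}$. If a variable step-size version is wanted, one replaces Corollary~\ref{thm:global_bound} by the subsequent corollary allowing variable $h_i$, with no change to the reasoning.
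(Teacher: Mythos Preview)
Your proposal is correct and matches the paper's own approach exactly: the paper simply states that this theorem (together with the companion global bounds for Euler and the weak trapezoidal method) follows by ``combining all of the previous results,'' namely Theorem~\ref{thm:LG}/Corollary~\ref{thm:global_bound}, Theorem~\ref{thm:mdpt_local}, and Theorem~\ref{thm:semi_bound}, without writing out any further details. Your bookkeeping is precisely what is intended.
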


The following immediate corollary to the theorem above recovers the result in \cite{AndGangKurtz2011}.
\begin{cor}
Under the additional condition  $h >  N^{-\gamma- \min\{\m_k\} }$ in Theorem \eqref{thm:mdpt_global},   the leading order of the error of the midpoint method is $O(h^2)$.
\end{cor}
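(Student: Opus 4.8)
The plan is to read off the claim directly from the error bound in Theorem~\ref{thm:mdpt_global} by comparing its two summands and observing that the extra hypothesis $h > N^{-\gamma-\min\{\m_k\}}$ forces the $h^2$ term to dominate. Recall that Theorem~\ref{thm:mdpt_global} gives
\[
  \| (P_{M,h}^n - \cP_{nh}) \|_{3\to 0}^{\nabla^N}
  = O\!\left( \big[N^{3\gamma} h^2 + N^{2\gamma - \min\{\m_k\}} h\big]\, e^{C_3 N^{\gamma} T} \right).
\]
First I would note that, dividing both sides by the positive quantity $N^{2\gamma} h$, the inequality $N^{2\gamma - \min\{\m_k\}} h \le N^{3\gamma} h^2$ is equivalent to $N^{-\min\{\m_k\}} \le N^{\gamma} h$, i.e.\ to $h \ge N^{-\gamma - \min\{\m_k\}}$. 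Hence under the stated hypothesis the bracketed expression satisfies $N^{3\gamma} h^2 + N^{2\gamma - \min\{\m_k\}} h = O(N^{3\gamma} h^2)$, so the global error is $O(N^{3\gamma} h^2 e^{C_3 N^{\gamma} T})$. Treating the scaling parameter $N$ (and therefore $N^{3\gamma}$, the constant $C_3$, and the horizon $T$) as fixed, this is $O(h^2)$, which is precisely the claimed leading order.

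I do not anticipate any real obstacle here, since the argument is a one-line comparison of the two terms already isolated in Theorem~\ref{thm:mdpt_global}; the only point deserving a moment's attention is verifying that the crossover threshold is exactly $h = N^{-\gamma - \min\{\m_k\}}$, which is the transition point already flagged in the remark following Theorem~\ref{thm:mdpt_local}. To make the connection with \cite{AndGangKurtz2011} transparent one may further remark that specializing to the classical scaling $\gamma = 0$, $\m_k \equiv 1$ turns the hypothesis into $h > 1/N$, which is exactly the running assumption $h \gg 1/N$ used there, and reduces the bound to $O(h^2 e^{C_3 T})$, recovering their second-order weak convergence statement for the midpoint method.
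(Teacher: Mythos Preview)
Your argument is correct and matches the paper's approach: the corollary is stated there as immediate from Theorem~\ref{thm:mdpt_global} with no further proof given, so your one-line comparison of the two bracketed terms is exactly what is intended. The specialization to the classical scaling you add at the end is a nice touch that makes the link to \cite{AndGangKurtz2011} explicit.
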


\begin{thm} \textbf{(Global bound for the weak trapezoidal method)}\\
Suppose that the step size $h$ satisfies 
$h <  N^{- \gamma},$  and $T = nh$. Then  
\begin{equation*}
\| P^n_{\T,h} - \cP_{nh} \|_{3 \to 0 }^{\nabla^N} = O ( h^2 N^{3\gamma}e^{N^\gamma C_3  T} ) 
\end{equation*}
where $C_3 \in O(\| \lambda \|_3^{\nabla^N} )$ is defined in \eqref{eq:Cn}. 
\label{thm:wtm_global}
\end{thm}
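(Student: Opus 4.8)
The plan is to assemble three ingredients already established above, with no new estimate required: the reduction of the global error to the one-step local error (Corollary~\ref{thm:global_bound}), the one-step local error bound for the weak trapezoidal method (Theorem~\ref{thm:local_trap}), and the regularity estimate for the exact semigroup $\cP_t$ (Theorem~\ref{thm:semi_bound}). This is the same synthesis that produces Theorems~\ref{thm:euler_global} and~\ref{thm:mdpt_global}.

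First I would apply Corollary~\ref{thm:global_bound} with $m = 3$ and with the approximate process taken to be $Z_\T^N$, so that $P_h = P_{\T,h}$. This is legitimate because the only structural hypothesis used in the proof of Theorem~\ref{thm:LG} (and hence in Corollary~\ref{thm:global_bound}) is that $P_h$ is a contraction in $\|\cdot\|_{0\to 0}$; here $P_{\T,h}f(x) = \E_x f(Z_\T^N(h))$ is an expectation of $f$ evaluated at a random state, so $\|P_{\T,h}f\|_\infty \le \|f\|_\infty$ and the contraction property holds. The corollary then gives, for $f \in C_0^3(\R^d,\R)$,
\begin{equation*}
   \|(P_{\T,h}^n - \cP_{nh}) f\|_0^{\nabla^N} = O\!\left( n\, \|P_{\T,h} - \cP_h\|^{\nabla^N}_{3\to 0} \max_{\ell \in \{1,\dots,n\}} \|\cP_{\ell h} f\|_3^{\nabla^N}\right).
\end{equation*}

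Next I would substitute the two remaining bounds. Theorem~\ref{thm:local_trap}, valid under the standing assumption $h < N^{-\gamma}$, gives $\|P_{\T,h} - \cP_h\|^{\nabla^N}_{3\to 0} = O(N^{3\gamma}h^3)$. For the semigroup factor, Theorem~\ref{thm:semi_bound} gives $\|\cP_{\ell h} f\|_3^{\nabla^N} \le \|f\|_3^{\nabla^N} e^{N^\gamma C_3 \ell h} \le \|f\|_3^{\nabla^N} e^{N^\gamma C_3 T}$ for every $\ell \le n$, since $\ell h \le nh = T$, where $C_3 \in O(\|\lambda\|_3^{\nabla^N})$ is the constant in~\eqref{eq:Cn}. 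Combining these and writing $n = T/h$,
\begin{equation*}
   \|(P_{\T,h}^n - \cP_{nh}) f\|_0^{\nabla^N} = O\!\left( \frac{T}{h}\, N^{3\gamma} h^3\, \|f\|_3^{\nabla^N} e^{N^\gamma C_3 T}\right) = O\!\left( N^{3\gamma} h^2\, \|f\|_3^{\nabla^N} e^{N^\gamma C_3 T}\right),
\end{equation*}
absorbing the fixed horizon $T$ into the implied constant. Dividing by $\|f\|_3^{\nabla^N}$ and taking the supremum over nonzero $f \in C^3$ yields $\|P_{\T,h}^n - \cP_{nh}\|_{3\to 0}^{\nabla^N} = O(N^{3\gamma} h^2 e^{N^\gamma C_3 T})$, as claimed.

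The main point requiring care is bookkeeping rather than a genuine obstacle: one must check that the hypothesis $h < N^{-\gamma}$ is consistent across the invoked results. It is needed, and already assumed, in Theorem~\ref{thm:local_trap}, while Theorem~\ref{thm:semi_bound} and the global-from-local reduction impose no restriction on $h$; hence the combined conclusion holds exactly in the regime $h < N^{-\gamma}$ stated in the theorem. The substance of the result lives entirely in the local error analysis of Section~\ref{sec:trap} and the semigroup bound of Theorem~\ref{thm:semi_bound}; this theorem is simply their corollary.
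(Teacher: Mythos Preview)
Your proposal is correct and matches the paper's approach exactly: the paper states this theorem (together with Theorems~\ref{thm:euler_global} and~\ref{thm:mdpt_global}) immediately after the phrase ``Combining all of the previous results, we have the following theorems,'' with no further argument given. Your explicit assembly of Corollary~\ref{thm:global_bound}, Theorem~\ref{thm:local_trap}, and Theorem~\ref{thm:semi_bound} is precisely that combination, and your added remarks on the contraction property of $P_{\T,h}$ and the role of the hypothesis $h < N^{-\gamma}$ only make the argument more complete.
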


%Note that, by mean value theorem, there exists a constant C such that
%\begin{align}
%\begin{split}
%\nabla^N_k f(x) &=  N^{\beta_k + \zeta_k \cdot \alpha-\gamma}(f(x + \zeta_k^{N})   - f(x)) \\
%&\leq N^{\beta_k + \zeta_k \cdot \alpha -\gamma}  C \sum_{i=1}^S N^{- \alpha_i} \| f \|_1 \\
%&\in O(\| f\|_1)
%\end{split} 
%\end{align}
%This holds for $N$ because 

Thus, we see that  the weak trapezoidal method detailed in Algorithm \ref{alg:weaktrap} is the only method that boasts a global error of second order in the stepsize $h$ in an
``honest sense.'' That is, it is a second order method regardless of the relation of $h$ with respect to $N$.  This is in contrast to the midpoint method which has  second order accuracy only when  the order of $h$ is larger than  $N^{-\gamma -\min\{\m_k\}}$.

\subsection{Stability Concerns}
\label{sec:stability}

The main results and proofs of our paper have incorporated stability concerns into the analysis.  This is seen in the statements of the theorems by the running condition that $h< N^{-\gamma}$, where we recall that $N^{\gamma}$ should be interpreted as the time-scale of the system.  Without this condition, the methods are unstable.  It is an interesting question, and the subject of future work, to determine the stability properties of other methods in this setting.  

As an instructive example, again consider the system
\begin{equation*}
	S_1 \overset{100}{\underset{100}{\rightleftarrows}} S_2
\end{equation*}
with $X_1(0) = X_{2}(0) = \text{10,000}$.  In this case, it is natural to take $N = $ 10,000.  As the rate constants are $100 = \sqrt{\text{10,000}}$, we take $\beta_1 = \beta_2 = 1/2$ and find that $\gamma = 1/2$.  The equation governing the normalized process $X^N_1$ is
\begin{equation*}
    X_1^N(t) = X_1^N(0) - Y_1\bigg(N^{1/2}N \int_0^t X_1^N(s)ds\bigg)\frac{1}{N} + Y_2\bigg(N^{1/2}N \int_0^t (2 - X_1^N(s))ds\bigg)\frac{1}{N}
\end{equation*}
where we have used that $X^N_1 + X^N_2 \equiv 2$.  It is now clear that if the condition $h < N^{-\gamma}$ is violated a path generated by any of the explicit methods discussed in this paper will behave quite poorly.

\section{Examples}
\label{sec:examples}

We provide two test systems.  The first is a simple linear system with three species that we will use to demonstrate our main analytical results.  The second is a gene-protien-mRNA model we will use to demonstrate the capabilities of the different methods on an actual test problem.  We note that in all simulations of the weak trapezoidal algorithm, we chose $\theta = 1/2$.
\vspace{.1in}

\noindent  \textbf{Example 1.}
Consider the following first order reaction network 

%%\begin{displaymath}

%%%%%%%%%%%%%%%%%%%%%%%%
%    0.03>    0.1>     
%  A < - > B < - > C   
%    < 1       1 >
%%%%%%%%%%%%%%%%%%%%%%%%

%\xymatrix{
%  A \ar@<lex>[r]^3 & \ar@<lex>[l]^5_{.} B}

%%\end{displaymath}

\begin{equation*}
A \overset{\kappa_1}{\underset{\kappa_2}{\rightleftarrows}} B \overset{\kappa_3}{\underset{\kappa_4}{\rightleftarrows}} C,
\end{equation*}
with $\kappa_1 = 0.03, \kappa_2 = 1, \kappa_3 = 0.1,$ and $\kappa_4 = 1$.
Starting from the initial state 
\begin{equation*}
  X(0) = (X_A(0), X_B(0), X_C(0)) = (13000, 100, 20),
  \end{equation*} 
  where we make the obvious associations $X_1 = X_A, X_2 = X_B,$ and $X_3 = X_C$.  We approximate $X(2)$ using the three methods considered in this paper: 
Euler, midpoint, and weak trapezoidal with a choice of $\theta = 1/2$. For first order systems, we 
may find the first moments and the covariances of $X(t)$ as solutions of linear ODEs using a Moment 
Generating function approach \cite{Othmer2005}.  
%For each method of approximation, we compare the first and the second moments of $Z(2)$ from those of $X(2)$, the latters of which can be numerically solved from the aforementioned ODEs. 

\begin{figure}
  \centering
  {\includegraphics[scale=0.3]{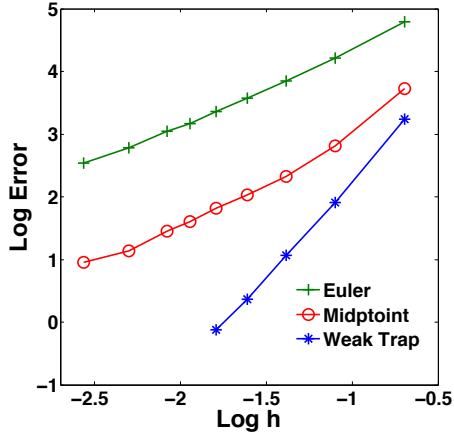}}
\caption{The log-log plot of  $|\E[X_3^2(2)] -  \E[Z_{3}^2(2)]|$ against $h$ 
for the three approximation methods.  The slope for Euler's method is 1.21, whereas the slope for the weak trapezoidal solution with $\theta = 1/2$ is 3.06, which is better than expected.  The curve governing the solution from the midpoint method appears to not be linear; a behavior predicted by Theorem \ref{thm:mdpt_local}.
} 
\label{fig:plotC2}
\end{figure}

In Figure \ref{fig:plotC2}, we show a log-log plot of  $|\E[X_3^2(2)] -  \E[Z_{3}^2(2)]|$ against $h$ for the three approximation methods.  Each data point was found from either $10^6, 2.9\times 10^6, 3.9\times 10^6, 4.9\times 10^6, 8\times 10^6,$ or $10^7$ independent simulations, with the number of simulations  depending upon the size of $h$ and the method being used.    The slope for Euler's method is 1.21, whereas the slope for the weak trapezoidal solution is 3.06, which is better than expected.  The curve governing the solution from the midpoint method appears to not be linear; a behavior predicted by Theorem \ref{thm:mdpt_local}.

\begin{figure}
  \centering
  \subfloat[]
  {\includegraphics[scale=0.3]{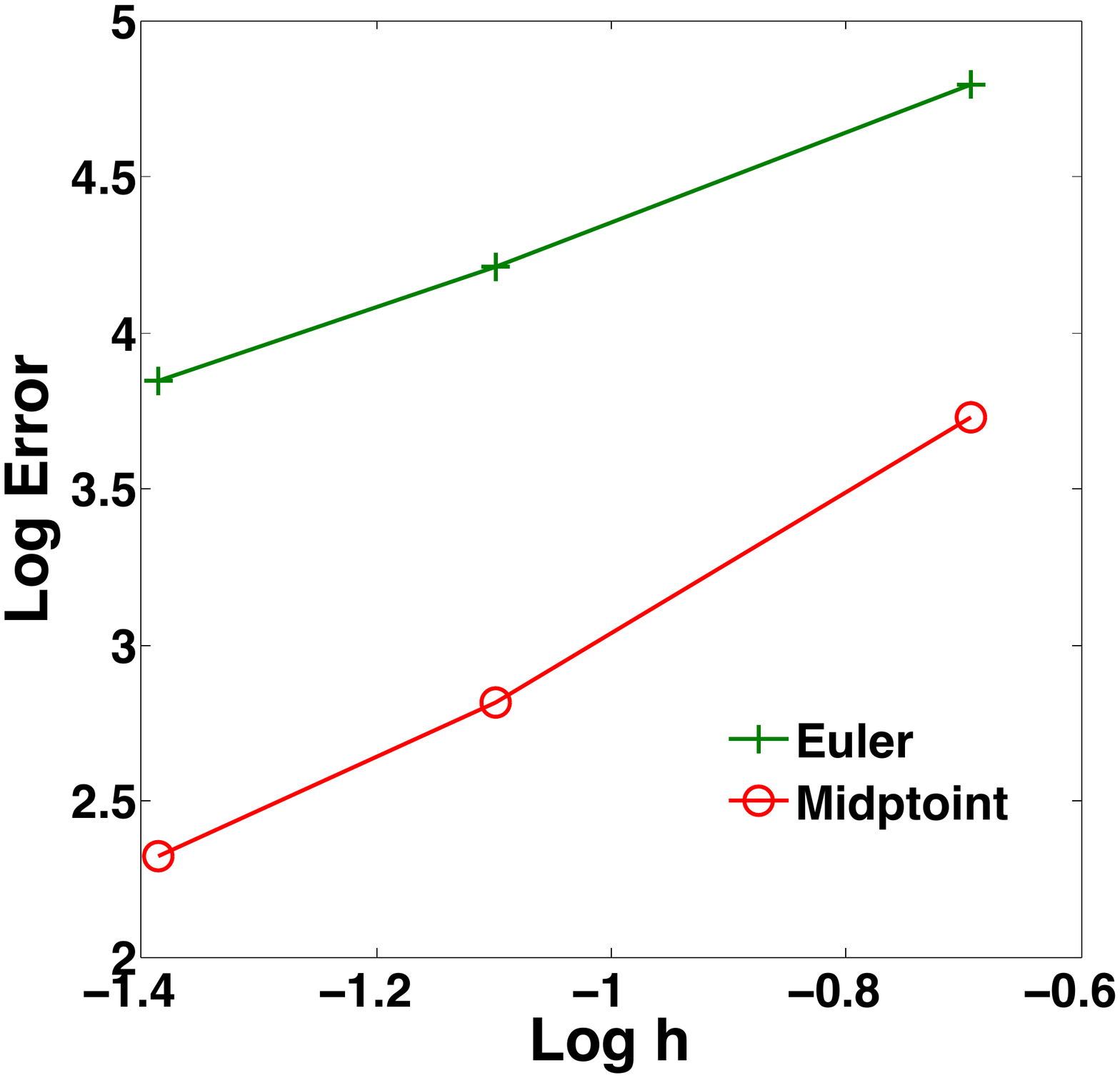}}
  \subfloat[]
  {\includegraphics[scale=0.3]{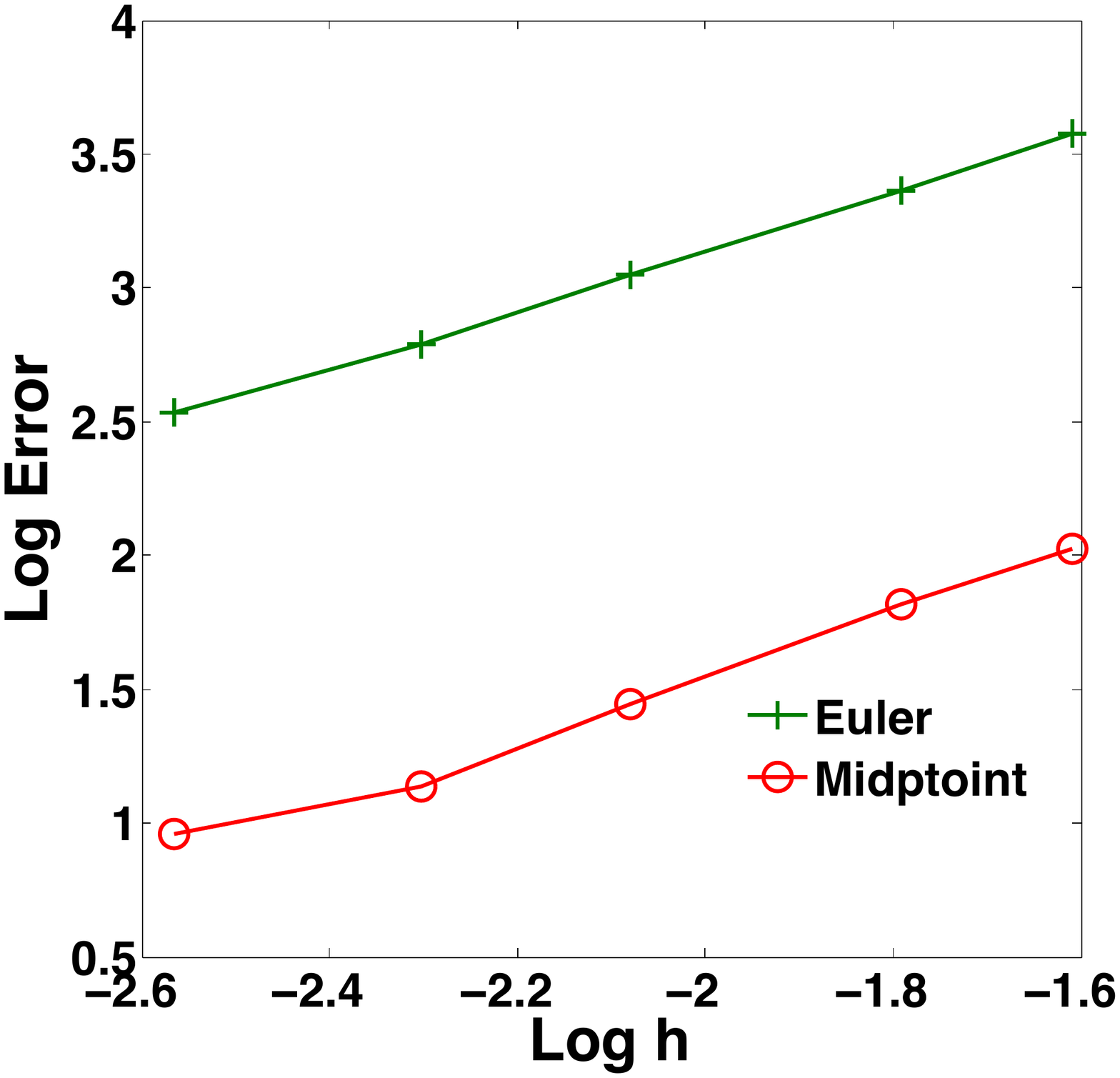}}  
 \caption{The log-log plot of $|\E[X_3^2(2)] -  \E[Z_3^2(2)]|$ against $h$.  The slope for generated via midpoint tauleaping shifts from $2.03$ in (a) to  $1.12$  in (b). }
\label{fig:plot2}
\end{figure}

In Figure \ref{fig:plot2} we again consider the log-log plots of  $|\E[X_3^2(2)] -  \E[Z_{3}^2(2)]|$ against $h$, but now only for Euler's method and the midpoint method so that we may see the change in behavior in the midpoint method predicted in Theorem \ref{thm:mdpt_local}.    In (a), we see that for larger $h$ the slope generated via the midpoint method is 2.03, whereas in (b) the slope is 1.12 when $h$ is smaller.  For reference, in (a) the slope generated by Euler's method is 1.366, whereas in (b) it is 1.09.

While the simulations make no use of the scalings inherent in the system, it is instructive for us to quantify them in this example so that we are able to understand the behavior of the midpoint method.    We have $N \approx 10^4$, $\alpha_1 = 1, \alpha_2 = 1/2,$  $\alpha_3 = 1/4,$ and $m_k = 1/4$.  Also, $\gamma \approx 0.$  Therefore, Theorem \ref{thm:mdpt_local} predicts  the midpoint method will behave as an order two method if $h \gg N^{-1/4}\approx 1/10$, or if $\log(h) \gg -2.3$, which roughly agrees with what is shown in Figures \ref{fig:plotC2} and \ref{fig:plot2}.  Note that  Theorem \ref{thm:mdpt_local} will never provide a sharp estimate as to when the behavior will change as it is a local result and the scalings in the system will change during the course of a simulation.

The fact that the trapezoidal method gave an order three convergence rate above does not hold in general.  This was demonstrated in the proof of Theorem \ref{thm:local_trap}, but it is helpful to also show this via example.
In Figure \ref{fig:Berrors} we present a log-log plot of $| \E X_2(2) - \E Z_2(2)|$ for the different algorithms on this same example.  The approximate slopes are: 1.02 for Euler's methods, 2.372 for midpoint method, and 2.3 for the trapezoidal method.  
%Note that Corollary \ref{cor:linear} explains why the midpoint method is as accurate as the weak trapezoidal algorithm in this case.  
We point out that all of the plots above represent results pertaining to the {\em non-normalized processes} as the simulation methods themselves make no use of the scalings.

\begin{figure}
\begin{center}
  {\includegraphics[scale=0.3]{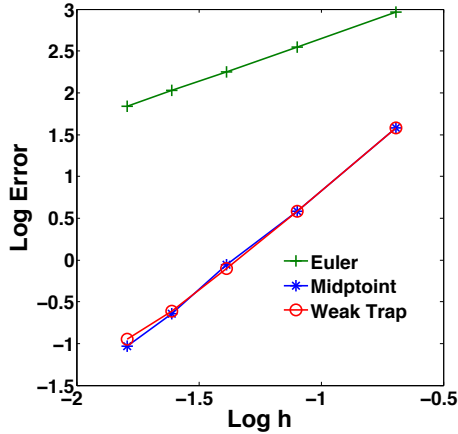}}
\end{center}
\caption{Log-log plot of  $|\E[X_3(2)] - \E[Z_3(2)]|$ against $h$ for the three approximation methods.  The approximate slopes are: 1.02 for Euler's methods, 2.372 for midpoint method, and 2.3 for the trapezoidal method.}
\label{fig:Berrors}
\end{figure}

\noindent \textbf{Example 2.}  Consider a  model of gene transcription and translation:
 \begin{align*}
\displaystyle G \overset{25}{\to} G + M, \quad M \overset{1000}{\to} M + P, \quad 2P \overset{0.001}{\to} D, \quad M \overset{0.1}{\to} \emptyset, \quad P \overset{1}{\to} \emptyset.  \end{align*}
Here a single gene is being translated into mRNA, which is then being transcribed into proteins, and finally the proteins produce stable dimers.  The final two reactions represent degradation of mRNA and proteins, respectively.  Suppose we start with one gene and no other molecules, and want to estimate the expected number of dimers at time $T=1$ to an accuracy of $\pm$ 1.0 with 95\% confidence.  Therefore, we want the variance of our estimator to be smaller than $(1/1.96)^2 = .2603$.
 
 While $\epsilon = 1$ for the unscaled version of this problem, the simulation of just a few paths of the system will show that there will be somewhere in the magnitude of 3,500 dimers at time $T=1$.  Therefore, for the scaled system, we are asking for an accuracy of $\widetilde \epsilon = 1/3500 \approx 0.0002857$.  Also, a few paths (100 is sufficient) shows that the order of magnitude of the variance of the normalized number of dimers is approximately 0.11.   Thus, the approximate number of exact sample paths we will need to generate can be found by solving
 \begin{equation*}
 	\frac{1}{n} \mathsf{Var}(\text{normalized \# dimers}) = (\widetilde \epsilon/1.96)^2 \implies n = 5.18 \times 10^6.
 \end{equation*}
 Therefore, we will need approximately five million independent sample paths generated via an exact algorithm.  Implementing the modified next reaction method \cite{Anderson2007a} on our machine (using Matlab), each path takes approximately 0.03 CPU seconds to generate.  Therefore, the approximate amount of time to solve this particular problem will be 155,000 CPU S, which is about forty three hours.  The outcome of such a simulation is detailed in Table \ref{table1}  where ``\# updates'' refers to the total number, over all paths, of updates to the system performed, and random variables generated, and is used as a quantification for the computational complexity of the different methods under consideration.  In terms of software and hardware, the authors used Matlab for all computations, which were performed on an Apple machine with a 2.2 GHz Intel i7 processor.
 
	\begin{table}
	\begin{tabular}{|c|c|c|c|c|}\hline
		 Approximation & \# paths &  CPU Time & Variance of estimator  & \# updates    \\  \hline
		 3714.2 $\pm$ 1.0 & 4,740,000  & 149,000  CPU S  & 0.25995  & 8.27 $\times 10^{10}$ \\ \hline
	\end{tabular}
	\caption{Performance of Exact algorithm with crude Monte Carlo estimator \eqref{eq:estimator1}.}
	\label{table1}
	\end{table}

Next, we solved the problem using Euler's method, the approximate midpoint method, and the weak trapezoidal method with $\theta = 1/2$.  We note that for each of the three approximations, we used the most naive implementation possible by simply setting the value of any component that goes negative in the course of a step to zero, and by using a fixed step size, $h>0$.  Thus, improvements can be gained on the stated results by using a more sophisticated implementation \cite{Anderson2007b,Cao2005}.  However, we did produce our approximate paths in batches of 50,000, which greatly reduces the cost of generating the Poisson random variables with the built in Matlab Poisson random number generator.

 In Table \ref{table2}  we provide data on the performance of Euler's method with various step-sizes, combined with a crude Monte Carlo estimator \eqref{eq:CMC}.  Note that the bias in Euler's method is apparent even for very small $h$.
%\noindent Method: Euler's method with crude Monte Carlo. 
\begin{table}
\begin{tabular}{|c|c|c|c|c|c|}\hline
Step-size & Approximation &  \# paths &  CPU Time & Variance of estimator  & \# updates    \\     \hline
 $h = 3^{-7}$&  3,712.3 $\pm$ 1.0 & 4,750,000 & 13,374.6 CPU S  & 0.25898  & $6.2 \times 10^{10}$  \\ [.1 ex] \hline
 $h = 3^{-6}$&  3,707.5 $\pm$ 1.0 &  4,750,000 & 6,207.9 CPU S & 0.25839  & $2.1 \times 10^{10}$ \\[.1 ex] \hline
 $h = 3^{-5}$&  3,693.4 $\pm$ 1.0 & 4,700,000 & 2,803.9 CPU S & 0.26018  & $6.9 \times 10^9$ \\ [.1 ex]\hline
 $h = 3^{-4}$& 3,654.6 $\pm$ 1.0 & 4,650,000 &  1,219 CPU S & 0.25940 & $2.6 \times 10^9$ \\ \hline
\end{tabular}
\caption{Performance of Euler's method with crude Monte Carlo.}
\label{table2}
\end{table}
In Table \ref{table3} we provide data on the performance of the midpoint method with various step-sizes, combined with a crude Monte Carlo estimator \eqref{eq:CMC}.  Note that the solution has a much higher variance when $h = 1/3$, thereby necessitating significantly more paths to get a desired tolerance.  This demonstrates the stability concerns discussed in Section \ref{sec:stability}.  This problem does not arise as much when using the weak trapezoidal method.
\begin{table}
\begin{tabular}{|c|c|c|c|c|c|}\hline
Step-size & Approximation &  \# paths &  CPU Time & Variance of estimator  & \# updates    \\     \hline
 $h = 3^{-4}$&  3,713.6 $\pm$ 1.0 & 4,650,000 &  1,269.1 CPU S  & 0.25996  &   $2.3 \times 10^9$ \\ [.1 ex] \hline
 $h = 3^{-3}$&  3,713.9 $\pm$ 1.0  & 4,500,000   & 497.5 CPU S & 0.25860  &  $7.6 \times 10^8$ \\[.1 ex] \hline
 $h = 3^{-2}$&  3,722.4 $\pm$ 1.0 & 4,050,000  & 177.6 CPU S & 0.25972  & $2.2 \times 10^8$  \\ [.1 ex]\hline
 $h = 3^{-1}$& 3,986.1 $\pm$ 1.0 & 18,500,000 & 376.0 CPU S & 0.26020  &  $3.3 \times 10^8$  \\ \hline
\end{tabular}
\caption{Performance of midpoint method with crude Monte Carlo.}
\label{table3}
\end{table}
In Table \ref{table4} we provide data on the performance of the weak trapezoidal method with various step-sizes, combined with a crude Monte Carlo estimator \eqref{eq:CMC}. 
%Note that while two random variables are required for each reaction channel per step, the CPU time for a given step size does not double when compared to the midpoint method, pointing out that it is not the generation of the random variables taking up the bulk of the CPU time.
 We see that for this example the midpoint method and the weak trapezoidal method are, overall, comparable. However, the weak trapezoidal method performs, in terms of bias and required CPU time, significantly better than does the midpoint method for $h = 1/3$. 
\begin{table}
\begin{tabular}{|c|c|c|c|c|c|}\hline
Step-size & Approximation &  \# paths &  CPU Time & Variance of estimator  & \# updates    \\     \hline
 $h = 3^{-4}$&  3,714.4 $\pm$ 1.0 & 4,750,000  & 2,120.5 CPU S  & 0.25940  &  $4.6 \times 10^9$ \\ [.1 ex] \hline
 $h = 3^{-3}$&  3,714.6 $\pm$ 1.0 & 4,750,000  & 898.2 CPU S & 0.25940  & $1.6 \times 10^9$  \\[.1 ex] \hline
 $h = 3^{-2}$& 3,725.6 $\pm$ 1.0  & 4,800,000 & 349.8 CPU S & 0.25965 & $5.2 \times 10^8$  \\ [.1 ex]\hline
 $h = 3^{-1}$& 3,673.3 $\pm$ 1.0 & 8,850,000 & 238.2 CPU S & 0.25944 & $3.2\times 10^8$  \\ \hline
\end{tabular}
\caption{Performance of weak trapezoidal method with $\theta = 1/2$, with crude Monte Carlo.}
\label{table4}
\end{table}

It is worth noting that both the midpoint and weak trapezoidal methods compare decently on this example with the multi-level Monte Carlo method developed recently for stochastic chemical kinetic systems \cite{AndersonHigham2011}.  The choice of which method (an explicit solver discussed herein or a multi-level Monte Carlo solver) a user wishes to implement will therefore often  be problem, and user, specific.

We next used each of the methods above to estimate the probability that the number of dimers at time 1 is greater than or equal to 6,000.  Note that this probability is the expected value of the indicator function $1_{\{X_{\text{Dimer}}(1) \ge 6,000\}}.$  The results are presented in Table \ref{table5}, which provides 95\% confidence intervals for a few choices of $h$ for each method.  Note that in computing this approximation the weak trapezoidal method has significantly less bias than does the midpoint method for comparable step-sizes, making it the method of choice for this  particular choice of function $f$.  The necessary CPU time for each of the methods is the same as those reported above.\vspace{.2in}

\begin{table}
\begin{center}
\begin{tabular}{|c|c|c|c|}\hline
Method & Step-size & \# paths & Approximation     \\     \hline
 Exact & N.A. & 4,520,000 & 0.02843 $\pm$ 0.00015  \\ [.1 ex] \hline
 Euler & $h = 3^{-7}$&  4,750,000 & 0.02818 $\pm$  0.00015  \\[.1 ex] \hline
 Euler & $h = 3^{-6}$&  4,750,000 & 0.02782 $\pm$ 0.00015   \\ [.1 ex]\hline
 Midpoint & $h = 3^{-4}$ & 4,650,000 & 0.02718 $\pm$ 0.00015 \\[.1 ex] \hline
 Midpoint & $h = 3^{-3}$ & 4,500,000 &  0.02537 $\pm$ 0.00015\\ [.1 ex]\hline
 Weak Trap, $\theta = 1/2$ & $h = 3^{-4}$ & 4,750,000 & 0.02840 $\pm$ 0.00015 \\[.1 ex] \hline
 Weak Trap, $\theta = 1/2$ & $h = 3^{-3}$ & 4,750,000 & 0.02838 $\pm$ 0.00015 \\[.1 ex] \hline
 Weak Trap, $\theta = 1/2$ & $h = 3^{-2}$ & 4,800,000 & 0.02946 $\pm$ 0.00015 \\ \hline
\end{tabular}.
\caption{Approximation of $P\{X_{\text{Dimer}}(1) \ge 6,000\}$ using different methods and different step sizes. As expected, the weak trapezoidal method demonstrates significantly less bias than do the Euler and midpoint methods.}
\label{table5}
\end{center}
\end{table}

\bibliographystyle{amsplain} 
\bibliography{WeakTrap.bib}

\end{document}